%% file: main.tex
\documentclass[12pt]{article}

\usepackage[utf8]{inputenc} 
\usepackage[T1]{fontenc}    
\usepackage{hyperref}       
\usepackage{url}            
\usepackage{booktabs}       
\usepackage{amsfonts}       
\usepackage{nicefrac}       
\usepackage{microtype}      
\usepackage{bookmark}
\usepackage{fullpage}
\usepackage{pifont}
\usepackage{enumerate}
\usepackage{enumitem}
\usepackage{dsfont}
\usepackage{array}
\usepackage{authblk}
\usepackage{wrapfig}
\usepackage{bbm}
\usepackage{booktabs}
\usepackage[sort, numbers]{natbib}
\hypersetup{
	colorlinks = true,
	citecolor = blue,
	linkcolor = black
}

\usepackage{graphicx}
\usepackage{caption}
\usepackage{subcaption}
\usepackage{amsmath}
\usepackage{amsthm}
\usepackage{amssymb}
\usepackage{tikz}
\usepackage{mathtools}
\usepackage{tablefootnote}
\usepackage{multirow}
\usepackage{xparse}
\usetikzlibrary{arrows}

\allowdisplaybreaks[4]

\usepackage{mathrsfs}

\usepackage{algorithm}
\usepackage{algorithmic}
\usepackage{bm}
\def\tod{\overset{\text{d}}{\rightarrow}}

\def\toas{\overset{a.s.}{\to}}
\def\topb{\overset{p}{\to}}

\newtheorem{thm}{Theorem}[section]
\newtheorem{lem}{Lemma}[section]

\newtheorem{prop}{Proposition}[section]
\newtheorem{asmp}{Assumption}[section]

\newtheorem{rem}{Remark}[section]

\mathtoolsset{showonlyrefs}
\NewDocumentCommand{\mybar}{ O{0.8} O{0pt} m }{
    \mathrlap{\hspace{#2}\overline{\scalebox{#1}[1]{\phantom{\ensuremath{#3}}}}}\ensuremath{#3}
}

\input{tex/math_commands.tex}

\begin{document}

\title{
 Extending Subsampling to Sequential Stopping}
\author[1]{Jose Blanchet} 
\author[1]{Peter Glynn}
\author[1]{Wenhao Yang}
\affil[1]{{\normalsize Management Science and Engineering, Stanford University}}

\maketitle

\begin{abstract}%
    Fixed-width sequential stopping rules terminate a stochastic simulation once an estimated confidence interval reaches a prescribed width. Classical fixed-width theory typically relies on a strongly consistent estimator of the asymptotic variance. This makes the normalized stopping time asymptotically deterministic, allowing fixed-sample-size limit theory to be transferred to the estimator at termination. This mechanism can fail when simulation output has infinite variance or long-range dependence. Although self-normalization and subsampling can yield asymptotically valid confidence intervals at a fixed sample size, the scaling process and the stopping time may retain nondegenerate randomness, so fixed-sample-size quantiles need not provide valid coverage at termination. In this paper, we develop a unified framework based on a joint functional limit theorem for the estimation process and a scaling process. We characterize the asymptotic behavior of both the stopping time and the self-normalized estimator evaluated at termination, thereby obtaining asymptotically valid sequential confidence intervals in classical finite-variance and infinite-variance settings. Moreover, we introduce a sequential subsampling procedure that consistently estimates the distribution relevant at the stopping time without directly estimating nuisance parameters in the limit distribution. The framework is verified for heavy-tailed moving-average processes, stochastic approximation, and an M/G/1 queue with heavy-tailed service times.
\end{abstract}

\section{Introduction}
\label{sec: intro}
\input{tex/intro}

\section{Preliminary}
\label{sec: pre}
\input{tex/pre.tex}

\section{A General Sequential Stopping Rule}
\label{sec: frame}
\input{tex/frame.tex}
\section{Construction of Scaling Process \texorpdfstring{$Z_2(\cdot)$}{Z2}}
\label{sec: alt}
\input{tex/alt.tex}

\section{Examples}
\label{sec: eg}
\input{tex/example.tex}

\section{Proofs}
\label{sec: proofs}
\input{tex/appendix.tex}

\section{Conclusion}
\label{sec: conclusion}
\input{tex/conclusion.tex}

\section*{Acknowledgement}
This work was supported by the Air Force Office of Scientific Research under award number FA9550-
20-1-0397 and additional support is gratefully acknowledged from NSF 2229012, 2312204, and ONR N00014-24-1-2655.

\bibliographystyle{plainnat}
\bibliography{refer.bib}

\end{document}

%% file: tex/math_commands.tex
\def\1{\bm{1}}

\DeclareMathAlphabet{\mathsfit}{\encodingdefault}{\sfdefault}{m}{sl}
\SetMathAlphabet{\mathsfit}{bold}{\encodingdefault}{\sfdefault}{bx}{n}

\def\0{{\bf 0}}
\def\1{{\bf 1}}

\def\DM{{\mathcal D}}

\def\OM{{\mathcal O}}

\def\RB{{\mathbb R}}
\def\EB{{\mathbb E}}
\def\ZB{{\mathbb Z}}
\def\PB{{\mathbb P}}

\def\idx{\mathrm{id}_{\RB_{+}}}

\newcommand{\Var}{\mathrm{Var}}



%% file: tex/intro.tex
Stochastic simulation is widely used to estimate performance measures that are analytically intractable, including steady-state means, expected delays, and the outputs of stochastic approximation algorithms. A basic practical question is how long a simulation should be run. If the running length is fixed in advance, then the computational budget is known, but the precision of the resulting estimator is not. The final confidence interval may be too wide to be useful, or unnecessarily narrow because the simulation was continued beyond the required accuracy. This tension motivates sequential stopping rules, which monitor statistical precision during the simulation and terminate once a prescribed accuracy has been reached.

We begin with the fixed-sample-size problem. Let $\{X_i\}_{i\in\ZB}$ denote the output of a stochastic simulation, and suppose that the goal is to estimate its stationary mean $\mu=\EB[X_1]$ using the time average
\begin{align}
    \bar X_n=n^{-1}\sum_{i=1}^nX_i.
\end{align}
When the simulation output has finite variance and short-range dependence, the classical central limit theorem gives
\begin{align}
    \sqrt{n}\left(\bar X_n-\mu\right)\tod N(0,\sigma^2),
    \label{eq: intro_clt}
\end{align}
where $\sigma^2$ is the long-run variance, given by the sum of the autocovariances of the stationary output process. If $\widehat{\sigma}_n$ is a consistent estimator of $\sigma$, then an approximate $(1-\delta)$-level confidence interval for $\mu$ is
\begin{align}
    \text{CI}_n
    =
    \left[
        \bar X_n-q_{1-\frac{\delta}{2}}\frac{\widehat{\sigma}_n}{\sqrt n},
        \bar X_n+q_{1-\frac{\delta}{2}}\frac{\widehat{\sigma}_n}{\sqrt n}
    \right],
    \label{eq: intro_ci}
\end{align}
where $q_{1-\frac{\delta}{2}}$ is the $\left(1-\frac{\delta}{2}\right)$-quantile of the standard normal distribution. The confidence interval is asymptotically valid in the sense that
\begin{align}
    \PB(\mu\in \text{CI}_n)\to 1-\delta
    \qquad\text{as }n\to\infty.
    \label{eq: intro_fixed_validity}
\end{align}
Consistent long-run variance estimators under various dependence conditions have been studied extensively; see, for example, \cite{newey1986simple,flegal2010batch,politis2011higher,zhang2012inference}.

These fixed-sample-size results treat $n$ as chosen in advance. They control coverage at that $n$, but the width of the resulting confidence interval remains random; consequently, they do not tell the practitioner how long the simulation must run to achieve a prescribed accuracy. In many simulation studies, however, accuracy is the natural input and computational effort should adapt to the realized output. A fixed-width procedure therefore reverses the roles of sample size and precision: it specifies a target width $\varepsilon$ and continues the simulation until the confidence interval is sufficiently narrow. Let $\text{CI}_n$ be a confidence interval constructed from the first $n$ simulation outputs, and define the fixed-width stopping time
\begin{align}
    T(\varepsilon)=\inf\{n\ge n_0(\varepsilon):\operatorname{width}(\text{CI}_n)\le\varepsilon\}.
    \label{eq: intro_stop}
\end{align}
Here $n_0(\varepsilon)\to\infty$ is a minimum sample size that prevents premature termination due to unstable early estimates. By construction, the procedure attains the target width. We seek to establish that the confidence interval at termination also retains its nominal coverage:
\begin{align}
    \PB\bigl(\mu\in \text{CI}_{T(\varepsilon)}\bigr)\to 1-\delta
    \qquad\text{as }\varepsilon\to0.
    \label{eq: intro_seq_validity}
\end{align}
This is the desired asymptotic validity. It does not follow automatically from fixed-sample-size validity because $T(\varepsilon)$ is random and is determined from the same simulation output used to construct the final confidence interval. Consequently, a fixed-sample-size limit theorem cannot in general be evaluated at $T(\varepsilon)$ without additional process-level arguments.

The literature on sequential stopping procedures has developed over several decades. Sequential analysis was introduced by \citet{wald2004sequential} to improve sample efficiency in hypothesis testing. In stochastic simulation, early studies \citep{fishman1977achieving,law1979sequential,lavenberg1977sequential} provided empirical evidence for sequential procedures in steady-state and regenerative simulation, with applications including queueing and inventory systems. On the theoretical side, \citet{anscombe1952large,chow1965asymptotic,starr1966performance,nadas1969extension} laid the foundations for fixed-width sequential confidence intervals for the mean of i.i.d.\ random variables. \citet{glynn1992asymptotic} subsequently established general conditions for the asymptotic validity of sequential stopping rules for a broad class of simulation estimators, including steady-state simulation. Later work extended fixed-width procedures to Markov chain Monte Carlo \citep{jones2006fixed,flegal2015relative}. Classical stopping rules monitor the size of a confidence region and therefore require a strongly consistent estimator of the relevant variance or scaling quantity, which may be difficult to construct. To reduce this reliance, \citet{dong2019new} proposed a procedure based on independent replications, using variation across groups in place of a conventional variance estimator. \citet{dong2019asymptotic} used standardized time series to construct confidence regions for steady-state simulation, and related refinements were developed by \citet{alexopoulos2020steady,lolos2022sequential}.

Many stochastic simulations, however, do not fall into the classical finite-variance, short-range dependent setting. Heavy-tailed simulation noise can produce output with infinite variance. Long-range dependence can arise in network traffic, queueing systems, and other simulations with persistent temporal effects. In these settings, the square-root scaling and the classical Gaussian approximation may fail. Instead, for an appropriate normalization $a_n$, one may have
\begin{align}
    a_n(\bar X_n-\mu)\tod Z,
\end{align}
where $a_n$ may depend on unknown nuisance quantities, such as a tail index, a Hurst parameter, or a slowly varying function, and $Z$ may be a stable or other nonstandard limit. A substantial fixed-sample-size literature develops valid inference in such cases. Classical bootstrap procedures can fail under infinite variance \citep{athreya1987bootstrap,knight1989bootstrap}, while smaller bootstrap samples and subsampling can restore validity under suitable conditions \citep{kinateder11992invariance,arcones1989bootstrap,arcones1991additions,wu1990bootstrapping,romano1999subsampling}. Self-normalization and subsampling can also eliminate unknown nuisance quantities and cover both heavy-tailed and long-range dependent processes \citep{fan2010statistical,mcelroy2013distribution,bai2016unified}. Thus, methods are available for constructing asymptotically valid confidence intervals at a sample size fixed in advance, but these fixed-sample-size results do not by themselves provide a fixed-width sequential procedure that remains valid at termination.

Extending this fixed-sample-size validity to fixed-width stopping is not automatic. In classical settings, a consistent scale estimator makes the properly normalized stopping time converge to a deterministic limit, allowing a random-time-change argument to transfer the fixed-sample-size limit to the stopping time. This mechanism can fail when validity is obtained through random self-normalization. Under infinite variance, for example, the partial-sum process and its quadratic self-normalizer can converge jointly to dependent stable processes, so the scale does not stabilize around a deterministic variance. Under long-range dependence, the normalization may involve an unknown Hurst parameter or slowly varying function, and a consistent long-run variance estimator may be unavailable or may have the wrong order. In these cases, the properly normalized stopping time can have a nondegenerate random limit determined by the limiting scaling process. The estimation process and the stopping time therefore remain asymptotically dependent, and quantiles obtained from a fixed-sample-size limit distribution need not provide the desired coverage at termination.

In this paper, we develop a unified sequential stopping rule for this broader class of stochastic simulations. Our starting point is a joint functional limit theorem for an estimation process and a data-dependent scaling process. We derive the asymptotic behavior of the resulting stopping time and of the self-normalized estimator evaluated at termination. However, the limiting distribution at termination generally depends on the unknown data-generating mechanism and is not available in closed form. To make the stopping rule practical, we introduce a sequential subsampling procedure that repeatedly runs shorter stopping experiments while the main simulation is in progress. The empirical distribution of these completed subprocedures consistently estimates the distribution relevant at the main stopping time.

We provide several constructions of the scaling process. For the classical i.i.d. sequence case, the empirical sum of squares yields a common rule in both finite- and infinite-variance settings. When this normalization does not match the order of the estimation error, we construct alternatives based on independent sectioning, batch means, and random scaling.  Together, these results give a single sequential methodology across simulation settings in which the convergence rate, limiting process, and existence of a second moment may all be unknown.

The rest of the paper is organized as follows. Section~\ref{sec: pre} reviews the fixed-sample-size self-normalization and subsampling ideas that motivate our construction. Section~\ref{sec: frame} presents the general sequential stopping rule and establishes its asymptotic validity and feasible calibration. Section~\ref{sec: alt} develops several choices of the scaling process. Section~\ref{sec: eg} verifies the framework in a collection of stochastic simulation models. Section~\ref{sec: proofs} gives the proofs of the results. Section~\ref{sec: conclusion} concludes the paper.

\paragraph{Notation.} For a sequence of stochastic processes $Z_n(\cdot)$ converging weakly to $Z(\cdot)$ in a specified topology, we write $Z_n(\cdot)\Rightarrow Z(\cdot)$. For random variables, $\tod$, $\topb$, and $\toas$ denote convergence in distribution, in probability, and almost surely, respectively. We write $\lfloor x\rfloor$ for the greatest integer less than or equal to $x$. We write $\mathbb D:=D([0,\infty),\RB)$ and $\mathbb D_+:=D([0,\infty),\RB_+)$. For random variables $X$ and $Y$, their Kolmogorov distance is $d_{\mathrm{Kol}}(X,Y)=\sup_t|\PB(X\le t)-\PB(Y\le t)|$. For $i\in\{1,2\}$, $\mathrm{WM}_i$ denotes the weak $M_i$ topology, namely the product of the univariate $M_i$ topologies for the two coordinates.

%% file: tex/pre.tex
Before diving into our main results, we first consider the standard mean estimation problem with i.i.d.\ samples of infinite variance. We review the inference methodology for the fixed-sample-size setting and highlight the challenge of extending it to fixed-width sequential stopping. Let $\{X_i\}_{i\ge1}$ be i.i.d.\ simulation outputs with mean $\mu:=\EB[X_1]$. We impose the following domain-of-attraction condition.
\begin{asmp}
    \label{asmp: domain}
    The distribution of $X_1$ is in the domain of attraction of an
    $\alpha$-stable law; that is, there exist sequences $s_n>0$ and $d_n\in\RB$
    such that:
    \begin{align}
        \frac{\sum_{i=1}^nX_i}{s_n}-d_n\tod Z_{\alpha},
    \end{align}
    where $Z_{\alpha}$ is an $\alpha$-stable random variable.
\end{asmp}
An immediate example for Assumption~\ref{asmp: domain} is the power law distribution. In a more general case, the distribution can be characterized as $\PB(|X_1|\ge t)=\frac{L(t)}{t^{\alpha}}$, where $L(t)$ is a slowly varying function at $+\infty$. And it is easy to check that $\EB|X_1|^\alpha$ can be infinite or finite. In this subsection, we assume $\alpha\in(1,2)$, which implies the samples $\{X_i\}_{i\ge1}$ are of infinite variance but finite mean. Under Assumption~\ref{asmp: domain}, a central limit theorem holds by \cite{geluk2000stable}:
\begin{align}
    n^{-\frac{1}{\alpha}}\ell(n)
    \sum_{i=1}^n \left(X_i-\mu\right)\tod Z_\alpha,
\end{align}
where $\ell(\cdot)$ is a slowly varying function dependent with $L(\cdot)$. Then, it is possible to establish a confidence interval for $\mu$ as long as $\alpha$, $\ell(\cdot)$ and quantiles of $Z_\alpha$ are known. To achieve that, the additional characteristics for $Z_{\alpha}$ such as skewness, scale and location shift are required to be estimated via point estimation, which is extremely complicated. However, a more efficient way to establish the confidence interval for $\mu$ is self-normalization. By \cite{logan1973limit}, under Assumption~\ref{asmp: domain}, the joint central limit theorem holds:
\begin{align}
    \label{eq: clt}
    \left(n^{-\frac{1}{\alpha}}\ell(n)
    \sum_{i=1}^n \left(X_i-\mu\right),
    n^{-\frac{2}{\alpha}}\ell(n)^2
    \sum_{i=1}^n(X_i-\mu)^2\right)
    \tod \left(Z_{\alpha}, Z_{\frac{\alpha}{2}}\right),
\end{align}
where $Z_{\frac{\alpha}{2}}$ is an $\frac{\alpha}{2}-$stable random variable and has some implicit dependence with $Z_{\alpha}$. Thus, replacing $\mu$ by $\bar{X}_n=\frac{1}{n}\sum_{i=1}^nX_i$ in the second component of \eqref{eq: clt} and applying self-normalization yields
\begin{align}
    \label{eq: self}
    \frac{\sqrt{n}\left(\bar{X}_n-\mu\right)}{\sqrt{\frac{1}{n-1}\sum_{i=1}^n(X_i-\bar{X}_n)^2}}\tod\frac{Z_{\alpha}}{\sqrt{Z_{\frac{\alpha}{2}}}}=:W_{\alpha}.
    \end{align}
Let $F_\alpha$ denote the distribution function of $W_\alpha$. By \eqref{eq: self}, one could skip estimating the index $\alpha$ and slowly varying function $\ell(\cdot)$ and construct an approximate $(1-\delta)$-confidence interval of $\mu$ by:
\begin{align}
    \left[\bar{X}_n-q_{1-\frac{\delta}{2}}\frac{\widehat{\sigma}_n}{\sqrt{n}},\bar{X}_n-q_{\frac{\delta}{2}}\frac{\widehat{\sigma}_n}{\sqrt{n}}\right],
    \label{eq: ci}
\end{align}
where $\widehat{\sigma}_n:=\sqrt{\frac{1}{n-1}\sum_{i=1}^n(X_i-\bar{X}_n)^2}$ and $q_x$ is the $x-$quantile of $F_{\alpha}$. Then, the only thing left to be estimated is the quantiles $q_{1-\frac{\delta}{2}}$ and $q_{\frac{\delta}{2}}$. However, the limit distribution $F_{\alpha}$ has a complicated dependence with several nuisance parameters. And it is also difficult to characterize the dependence of $Z_{\alpha}$ and $Z_{\frac{\alpha}{2}}$. Instead, \cite{romano1999subsampling} proposed a sub-sampling approach to estimate the quantiles of $F_{\alpha}$. We detail the procedure in the following:
\paragraph{Sub-sampling \citep{romano1999subsampling}:}
\begin{enumerate}
    \item[(a)] Given the dataset $\DM=\{X_1, X_2,\cdots, X_n\}$, construct $N_{n,b}=\tbinom{n}{b}$ subsets $\DM_1,\cdots, \DM_{N_{n,b}}$.
    \item[(b)] Calculate $\bar{X}_b^{(i)}$ and $\widehat{\sigma}_b^{(i)}$ for each subset $\DM_i$, $i=1,2,\cdots,N_{n,b}$.
    \item[(c)] Calculate the quantile by $\widehat{q}_x=\inf\left\{t\left|\frac{1}{N_{n,b}}\sum_{i=1}^{N_{n,b}}\mathbbm{1}\left(\frac{\sqrt{b}(\bar{X}_b^{(i)}-\bar{X}_n)}{\widehat{\sigma}_b^{(i)}}\le t\right)\ge x\right.\right\}$.
    \item[(d)] Output the confidence interval $\left[\bar{X}_n-\widehat{q}_{1-\frac{\delta}{2}}\frac{\widehat{\sigma}_n}{\sqrt{n}},\bar{X}_n-\widehat{q}_{\frac{\delta}{2}}\frac{\widehat{\sigma}_n}{\sqrt{n}}\right]$.
\end{enumerate}
Under standard conditions, $\widehat q_x$ consistently estimates $q_x$, so the confidence interval in \eqref{eq: ci} has asymptotic coverage $1-\delta$ when $n$ is fixed in advance. Its width, however, remains random. A fixed-width procedure instead allows the sample size to adapt to the simulation output and stops at
\begin{align}
    T(\varepsilon)=\inf\left\{n\ge n_0(\varepsilon):
    \left(\widehat{q}_{1-\frac{\delta}{2}}-\widehat{q}_{\frac{\delta}{2}}\right)
    \frac{\widehat{\sigma}_n}{\sqrt{n}}\le\varepsilon\right\},
    \label{eq: pre_stop}
\end{align}
where $n_0(\varepsilon)\to\infty$ is a minimum sample size. The goal is for the confidence interval evaluated at $T(\varepsilon)$ to retain asymptotic coverage $1-\delta$ as $\varepsilon\to0$. This property holds under classical finite-variance conditions \citep{glynn1992asymptotic}, but can fail under Assumption~\ref{asmp: domain}. Indeed, under Assumption~\ref{asmp: domain}, the self-normalizer $\widehat{\sigma}_n$ has a nondegenerate limit distribution under the appropriate scaling. As a consequence, the correspondingly scaled stopping time $T(\varepsilon)$ converges weakly to a nondegenerate random variable rather than to a deterministic constant. The self-normalized statistic evaluated at $T(\varepsilon)$ therefore need not have the fixed-sample-size limit $W_\alpha$. For the same reason, the subsampling procedure above does not directly yield a confidence interval with correct coverage at the random stopping time. In the next section, we develop the joint process-level limit theory and sequential subsampling procedure needed to estimate this random-time distribution.

%% file: tex/frame.tex
This section develops a general framework for sequential stopping without
imposing a particular data-generating mechanism. We represent the cumulative
estimator and its scaling statistic by two stochastic processes
$Z_1(\cdot)$ and $Z_2(\cdot)$, respectively, and assume their joint
functional convergence. 

\subsection{Assumptions}
\label{sec: stop_time_asymptotics}
\begin{asmp}
\label{asmp: fclt}
    Let $Z_1(\cdot)$ and $Z_2(\cdot)$ have sample paths in $\mathbb D$ and
    $\mathbb D_+$, respectively, with $Z_1(0)=0$ almost surely. There exist a constant $h\in\RB$, an eventually
    positive slowly varying function $\ell(\cdot)$ at infinity, and limiting
    processes $Y_1(\cdot)\in\mathbb D$ and $Y_2(\cdot)\in\mathbb D_+$ such that, as
    $\varepsilon\to0^+$,
    \begin{align}
    \label{eq: fclt}
        \left(\varepsilon^{h}\ell(\varepsilon^{-1})\left(Z_1\left(\frac{\cdot}{\varepsilon}\right)-\idx\left(\frac{\cdot}{\varepsilon}\right)\mu\right),\varepsilon^{h}\ell(\varepsilon^{-1}) \sqrt{Z_2\left(\frac{\cdot}{\varepsilon}\right)}\right)
        \overset{\mathrm{WM}_2}{\Rightarrow}
        \left(Y_1(\cdot), \sqrt{Y_2(\cdot)}\right).
    \end{align}
    Let $\mathbb F^Y=(\mathcal F_t^Y)_{t\ge0}$ be the usual augmentation of the natural filtration generated by $(Y_1,Y_2)$. Moreover, the following conditions are satisfied:
    \begin{itemize}
        \item[(a)] $0<h<1$.
        \item[(b)] As $t\to\infty$,
        $\frac{Z_1(t)-\mu t}{t}\to0$ and
        $\frac{Z_2(t)}{t^2}\to0$ almost surely.
        \item[(c)] The pair $(Y_1(\cdot),Y_2(\cdot))$ is jointly self-similar with indices $h$ and $2h$, respectively: for every $a>0$,
        \begin{align}
            \left(Y_1(a\,\cdot),Y_2(a\,\cdot)\right)
            \overset{d}{=}
            \left(a^hY_1(\cdot),a^{2h}Y_2(\cdot)\right).
        \end{align}
        \item[(d)] The process $Y_1(\cdot)$ is quasi-left-continuous with respect to $\mathbb F^Y$; that is, for every finite $\mathbb F^Y$-predictable stopping time $\tau$,
        \begin{align}
            \PB\bigl(\Delta Y_1(\tau)=0\bigr)=1.
        \end{align}
        \item[(e)] Almost surely, $Y_2(t)>0$ for every $t>0$, and $Y_2(\cdot)$ has no negative jumps.
    \end{itemize}
\end{asmp}
The common normalization in~\eqref{eq: fclt} places $Z_1-\mu\idx$ and $\sqrt{Z_2}$ on the same asymptotic scale, so their common nuisance normalization cancels in the self-normalized statistic. Condition~(b) ensures that the stopping rule is well-defined. Condition~(c), together with~(a), makes the distribution of the terminal self-normalized statistic invariant under the induced time rescaling. Condition~(d) rules out a jump of $Y_1$ at the predictable limiting stopping time, allowing random-time evaluation. Finally, condition~(e) ensures positivity of the limiting denominator at strictly positive times, while the absence of negative jumps ensures continuity of $Y_2$ at a positive finite first downcrossing. Sections~\ref{sec: alt} and~\ref{sec: eg} provide several constructions and examples for which these conditions can be verified.

Fix a deterministic coefficient $c>0$. Stopping as soon as
$c\sqrt{\frac{Z_2(n)}{n^2}}<\varepsilon$ may lead to premature termination if the
scaling statistic is small early in the simulation. To ensure that the
simulation length diverges as $\varepsilon\to0$, we impose a deterministic
lower bound and define
\begin{align}
\label{eq: stop1}
     T_c(\varepsilon)
     :=\inf\left\{n\in\mathbb N:n>\varepsilon^{-\nu},
     \quad c\sqrt{\frac{Z_2(n)}{n^2}}<\varepsilon\right\},
\end{align}
where $0<\nu<\frac{1}{1-h}$. Define
\begin{align*}
    \Phi_c(u)
    :=\inf\left\{s>0:
    c\frac{\sqrt{Y_2(s)}}{s}<u\right\},
    \qquad u>0.
\end{align*}
Throughout, the infimum of the empty set is understood to be $+\infty$.
Assumption~\ref{asmp: fclt}(b) ensures that
$T_c(\varepsilon)<\infty$ almost surely for every $c,\varepsilon>0$.
The strict lower bound in~\eqref{eq: stop1} also makes
$u\mapsto T_c\left(\frac{\varepsilon}{u}\right)$ right-continuous.
To identify the natural time scale of the stopping rule, let
\begin{align*}
    A(x):=x^{1-h}\ell(x),
    \qquad
    a_\varepsilon:=A^{\leftarrow}(\varepsilon^{-1}),
\end{align*}
where $A^{\leftarrow}$ is an asymptotic inverse of $A$. Since
$A$ is regularly varying with positive index $1-h$, the restriction
$\nu<\frac{1}{1-h}$ implies
$\frac{\varepsilon^{-\nu}}{a_\varepsilon}\to0$. Thus, although the deterministic
lower bound forces the simulation length to diverge, it is negligible
relative to the natural stopping scale. The functional convergence in
Assumption~\ref{asmp: fclt} controls the scaling statistic on normalized
time intervals bounded away from zero, but does not by itself rule out a
crossing at a vanishing fraction of $a_\varepsilon$. We therefore impose
the following small-time condition.
\begin{asmp}
\label{asmp: early_stopping}
    For every $K>0$,
    \begin{align}
        \lim_{\eta\downarrow0}\limsup_{\varepsilon\downarrow0}
        \PB\left(
        \inf_{\substack{n\in\mathbb N:\,
        \varepsilon^{-\nu}\le n\le \eta a_\varepsilon}}
        \frac{\sqrt{Z_2(n)}}{n}
        \le K\varepsilon
        \right)=0.
        \label{eq: early_stopping}
    \end{align}
\end{asmp}
Assumption~\ref{asmp: early_stopping} ensures that no fixed-coefficient rule stops before a vanishing fraction of the natural stopping scale.
Together with Assumption~\ref{asmp: fclt}, it allows the inverse-mapping
argument to be localized away from time zero. The following theorem gives
the asymptotic behavior of $T_c(\varepsilon)$.
\begin{thm}
    \label{thm: stop_prop}
    Under Assumptions~\ref{asmp: fclt} and~\ref{asmp: early_stopping}, the
    following conclusions hold for every fixed $c>0$:
    \begin{enumerate}
        \item[(a)] As $\varepsilon\to0^+$,
        $a_\varepsilon^{-1}T_c\left(\frac{\varepsilon}{\cdot}\right)
        \overset{M_1}{\Rightarrow}\Phi_c\left(\frac{1}{\cdot}\right)$
        locally on $(0,\infty)$.
        \item[(b)] For any $t>0$ and $a>0$, $\Phi_c(at)\overset{d}{=}a^{\frac{1}{h-1}}\Phi_c(t)$. Moreover, $\Phi_c(t)<\infty$ almost surely.
        \item[(c)] $\varepsilon^\gamma T_c(\varepsilon)\topb0$ whenever $\gamma>\frac{1}{1-h}$.
        \item[(d)] For every $u>0$ at which $\Phi_c(\cdot)$ is almost surely continuous, $\Phi_c(u)$ is an $\mathbb F^Y$-predictable stopping time.
    \end{enumerate}
\end{thm}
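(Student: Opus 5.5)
We obtain all four parts from one reformulation: $T_c$ is a first-passage (inverse) functional of the normalized ratio process. Put
\[
R_\varepsilon(s):=c\,\frac{\varepsilon^{h}\ell(\varepsilon^{-1})\sqrt{Z_2(s/\varepsilon)}}{s},
\qquad
R(s):=c\,\frac{\sqrt{Y_2(s)}}{s}.
\]
Assumption~\ref{asmp: fclt} together with the continuous mapping theorem gives $R_\varepsilon\Rightarrow R$ in $M_2$ locally on $(0,\infty)$; division by $s$ is a continuous operation away from $0$. We then read $T_c(\varepsilon/u)$ on the time scale $a_\varepsilon$. The defining inequality $c\sqrt{Z_2(n)}/n<\varepsilon/u$ is rewritten at normalized time $s=n/a_\varepsilon$, and the choice $A(a_\varepsilon)\sim\varepsilon^{-1}$ is used. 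Write $\tilde\varepsilon:=a_\varepsilon^{-1}$, so that $\tilde\varepsilon^{h}\ell(\tilde\varepsilon^{-1})\cdot a_\varepsilon=A(a_\varepsilon)\sim \varepsilon^{-1}$. The inequality then becomes $\tilde R_\varepsilon(s)<(1+o(1))/u$, where $\tilde R_\varepsilon$ is $R$ built with $\tilde\varepsilon$. Consequently
\[
a_\varepsilon^{-1}T_c(\varepsilon/u)\approx \inf\{s>\varepsilon^{-\nu}/a_\varepsilon:\tilde R_\varepsilon(s)<1/u\}.
\]

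\textbf{Part (a).} The lower bound $\varepsilon^{-\nu}/a_\varepsilon\to0$, so we localize. Assumption~\ref{asmp: early_stopping}, applied with $K$ chosen from $c$ and a compact range of $u$, shows that with probability close to one no crossing occurs in $[0,\eta]$. On $[\eta,\infty)$, the first-passage map $x\mapsto \inf\{s\ge\eta:x(s)<1/u\}$, viewed as a function of $u$, is the inverse of the running-infimum process $\inf_{\eta\le r\le s}x(r)$. We then invoke the $M_1$ continuity of the first-passage/inverse map (Whitt, Ch.~13), which requires the running infimum to be strictly decreasing at the relevant levels. The running-infimum map itself carries $M_2$ convergence into $M_1$ convergence for monotone paths. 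The points where this fails can be handled because the set of $u$ at which $\Phi_c$ jumps corresponds to flat stretches of the infimum. These flat stretches produce jumps of the inverse, which $M_1$ tolerates, restricted to compacts of $(0,\infty)$ in $u$.

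\textbf{Parts (b)--(d).}
\begin{itemize}
\item \emph{Part (b).} Self-similarity (c) gives $R(a\cdot)\overset d= a^{h-1}R(\cdot)$. Substituting $s=a^{1/(h-1)}\cdot$ then yields $\Phi_c(at)\overset d=a^{1/(h-1)}\Phi_c(t)$. Finiteness follows because $\sqrt{Y_2(s)}/s$ has the law of $s^{h-1}\sqrt{Y_2(1)}$ for each fixed $s$. Since $h<1$, this tends to $0$ in probability along $s=2^k$, so $\liminf_{s\to\infty}R(s)=0$ almost surely and the infimum is attained.
\item \emph{Part (c).} By (a) and (b), $a_\varepsilon^{-1}T_c(\varepsilon)$ is tight. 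Moreover $a_\varepsilon$ is regularly varying with index $1/(1-h)$, so $\varepsilon^{\gamma}a_\varepsilon\to0$ whenever $\gamma>1/(1-h)$.
\item \emph{Part (d).} $\Phi_c(u)$ is the hitting time of the open set $\{x<u\}$ by the ratio process $R$, which is adapted to the usual augmentation; hence it is a stopping time by the d\'ebut theorem. For predictability, $Y_2$ has no negative jumps by Assumption~\ref{asmp: fclt}(e), so $R$ cannot jump downward across the level $u$. The announcing sequence will be $\tau_k:=\inf\{s:R(s)<u+1/k\}\wedge k$, which increases to $\Phi_c(u)$. We must show $\tau_k<\Phi_c(u)$ strictly; this uses right-continuity and the no-downward-jump property, which give $R(\Phi_c(u))\le u$ and place the approach from above strictly before the hitting time. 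Continuity of $\Phi_c$ at $u$ excludes the degenerate case $\tau_k=\Phi_c(u)$ for all large $k$, which would occur if $R$ touched $u$ and lingered.
\end{itemize}

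The main obstacle is (a): establishing the $M_1$ continuity of the first-passage map given only $M_2$ input, and combining it with the localization from Assumption~\ref{asmp: early_stopping} uniformly over $u$ in compacts.
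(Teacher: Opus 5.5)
Your proposal follows essentially the paper's route. For (a), $T_c$ is written as a first-passage functional of the normalized scaling process, the $M_2$ convergence is pushed through the first-passage map into $M_1$ via Whitt's Chapter~13, and the start is localized with Assumption~\ref{asmp: early_stopping}; the paper packages the first-passage step in Lemma~\ref{lem: phi_cont}, using the monotone transform $t/(t+x(t))$ instead of a running infimum. For (b)--(d) you use self-similarity, tightness together with $\varepsilon^\gamma a_\varepsilon\to0$, and the announcing sequence $\Phi_c(u+\frac1k)$ with continuity of $R$ at the hitting time coming from the absence of negative jumps, as the paper does.

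The paper makes explicit what your ``$\approx$'' hides:
\begin{itemize}
\item the restriction to integer times, which needs a grid version of Lemma~\ref{lem: phi_cont} because $Z_2$ is only assumed c\`adl\`ag;
\item the truncation at $M$;
\item the level factor $\varepsilon A(a_\varepsilon)\to1$.
\end{itemize}
In (d) the two roles are slightly mixed up. Strictness $\tau_k<\Phi_c(u)$ comes from the no-negative-jump argument alone. Continuity of $\Phi_c$ at $u$ is what gives $\tau_k\uparrow\Phi_c(u)$: if $R$ reaches $u$ at a time $\sigma$ but first drops strictly below it only later, then $\lim_k\tau_k=\Phi_c(u+)\le\sigma<\Phi_c(u)$.
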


\subsection{Asymptotic Validity at Stopping Time}
\label{sec: stopping_validity}
\noindent Theorem~\ref{thm: stop_prop} identifies the limiting stopping time
$\Phi_c(1)$, which need not be deterministic. For the classical i.i.d.\
finite-variance rule based on a consistent variance estimator, the normalized
stopping time instead converges to a deterministic constant. In that setting,
replacing a deterministic sample size by the stopping time does not alter the
limiting distribution of the self-normalized estimator
\cite{glynn1992asymptotic,chow1965asymptotic}. When the limiting stopping time
is random, the terminal distribution must be obtained by evaluating the joint
limit in Assumption~\ref{asmp: fclt} at this random time.
Whenever a terminal self-normalized statistic below has a zero denominator,
it is defined to be zero.
\begin{thm}
\label{thm: asymp_valid}
    Under Assumptions~\ref{asmp: fclt} and~\ref{asmp: early_stopping}, suppose that $\Phi_1(1)>0$ almost surely. Fix $c>0$ and suppose that $\Phi_c(\cdot)$ is almost surely continuous at $1$. Then, as $\varepsilon\to0^+$,
    \begin{align}
             \frac{Z_1(T_c(\varepsilon))-T_c(\varepsilon)\mu}{\sqrt{Z_2(T_c(\varepsilon))}}
             \tod
             \frac{Y_1(\Phi_c(1))}{\sqrt{Y_2(\Phi_c(1))}}
             =:W_c.
             \label{eq: limit_dist}
    \end{align}
    The distribution of $W_c$ does not depend on $c$. Fix
    $\delta\in(0,1)$. Let $F$ denote its
    common distribution function, and define
    \begin{align*}
        c_l=F^{\leftarrow}\left(\frac{\delta}{2}\right),\qquad
        c_u=F^{\leftarrow}\left(1-\frac{\delta}{2}\right),\qquad
        c^*=c_u-c_l,
    \end{align*}
    and suppose that $c_l<c_u$, $F$ is continuous at $c_l$ and $c_u$, and $\Phi_{c^*}(\cdot)$ is almost surely continuous at $1$. Then the confidence interval
    \begin{align}
    \label{eq: cover}
        \text{CI}(\varepsilon)
        :=\left[
        \frac{Z_1(T_{c^*}(\varepsilon))}{T_{c^*}(\varepsilon)}
        -c_u\sqrt{\frac{Z_2(T_{c^*}(\varepsilon))}{T_{c^*}(\varepsilon)^2}},
        \frac{Z_1(T_{c^*}(\varepsilon))}{T_{c^*}(\varepsilon)}
        -c_l\sqrt{\frac{Z_2(T_{c^*}(\varepsilon))}{T_{c^*}(\varepsilon)^2}}
        \right]
    \end{align}
    satisfies
    \begin{align}
        \PB\left(\mu\in\text{CI}(\varepsilon)\right)
        \to1-\delta.
        \label{eq: oracle_coverage}
    \end{align}
\end{thm}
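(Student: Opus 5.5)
The plan is to write the terminal statistic as a functional of the rescaled pair and the rescaled stopping time. We then apply a continuous-mapping argument in which the point of evaluation is random.

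\textbf{Rescaling.} Put $\tilde\varepsilon:=a_\varepsilon^{-1}$, so that $\tilde\varepsilon^{h}\ell(\tilde\varepsilon^{-1})\,a_\varepsilon=A(a_\varepsilon)\tilde\varepsilon\cdot a_\varepsilon^{h}\cdots$. More simply, $A(a_\varepsilon)\sim\varepsilon^{-1}$, which gives $\tilde\varepsilon^{h}\ell(a_\varepsilon)\sim\varepsilon\, a_\varepsilon$. Define
\begin{align}
U_\varepsilon(s):=\tilde\varepsilon^{h}\ell(a_\varepsilon)\bigl(Z_1(a_\varepsilon s)-a_\varepsilon s\mu\bigr),\qquad V_\varepsilon(s):=\tilde\varepsilon^{h}\ell(a_\varepsilon)\sqrt{Z_2(a_\varepsilon s)}.
\end{align}
Assumption~\ref{asmp: fclt} (along $\tilde\varepsilon\to0$) gives $(U_\varepsilon,V_\varepsilon)\Rightarrow(Y_1,\sqrt{Y_2})$ in $\mathrm{WM}_2$. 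Since the common normalization cancels,
\begin{align}
\frac{Z_1(T_c(\varepsilon))-T_c(\varepsilon)\mu}{\sqrt{Z_2(T_c(\varepsilon))}}=\frac{U_\varepsilon(\tau_\varepsilon)}{V_\varepsilon(\tau_\varepsilon)},\qquad \tau_\varepsilon:=a_\varepsilon^{-1}T_c(\varepsilon).
\end{align}

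\textbf{Joint convergence.} The main step is to show
\begin{align}
(U_\varepsilon,V_\varepsilon,\tau_\varepsilon)\Rightarrow\bigl(Y_1,\sqrt{Y_2},\Phi_c(1)\bigr)
\end{align}
jointly. Theorem~\ref{thm: stop_prop}(a) only gives the marginal convergence of $\tau_\varepsilon$, so this is where the real work lies. We would redo the inverse-map argument of that theorem on the joint space. By Skorokhod representation, take $(U_\varepsilon,V_\varepsilon)\to(Y_1,\sqrt{Y_2})$ almost surely in $\mathrm{WM}_2$. The first-passage functional $g\mapsto\inf\{s>\eta: c\,g(s)/s<1\}$ is continuous at paths where the crossing is strict; this holds at $\Phi_c(1)$ almost surely because $\Phi_c$ is a.s.\ continuous at $1$ and $Y_2$ has no negative jumps. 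Assumption~\ref{asmp: early_stopping} controls crossings before $\eta$, which gives $\tau_\varepsilon\to\Phi_c(1)$ almost surely on the representation space. The hypothesis $\Phi_1(1)>0$ and the scaling in Theorem~\ref{thm: stop_prop}(b) give $\Phi_c(1)\in(0,\infty)$ almost surely.

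\textbf{Evaluation at the random time.} We then need $U_\varepsilon(\tau_\varepsilon)\to Y_1(\Phi_c(1))$ and $V_\varepsilon(\tau_\varepsilon)\to\sqrt{Y_2(\Phi_c(1))}$. $M_2$ convergence gives pointwise convergence at continuity points of the limit, uniformly on compact neighbourhoods of such points. By Theorem~\ref{thm: stop_prop}(d), $\Phi_c(1)$ is predictable, so Assumption~\ref{asmp: fclt}(d) gives $\Delta Y_1(\Phi_c(1))=0$ almost surely. For $Y_2$, at a first downcrossing with no negative jumps, any jump would be upward and would contradict the definition of the infimum at a strict crossing, so $Y_2$ is continuous there. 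Since $Y_2(\Phi_c(1))>0$ by Assumption~\ref{asmp: fclt}(e), the continuous mapping $(x,y)\mapsto x/y$ yields \eqref{eq: limit_dist}.

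\textbf{Independence of $c$.} Self-similarity gives $(Y_1(a\cdot),Y_2(a\cdot))\overset{d}{=}(a^hY_1,a^{2h}Y_2)$. Then $\Phi_c(1)=\inf\{s: \sqrt{Y_2(s)}/s<1/c\}$ transforms, with $a=c^{1/(1-h)}$, into $a\,\Phi_1(1)$ jointly with the paths. The ratio $Y_1/\sqrt{Y_2}$ is invariant under this scaling, so $W_c\overset{d}{=}W_1$.

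\textbf{Coverage.} At $T=T_{c^*}(\varepsilon)$, the event $\mu\in\mathrm{CI}(\varepsilon)$ is $\{c_l\le (Z_1(T)-T\mu)/\sqrt{Z_2(T)}\le c_u\}$. By the first part with $c=c^*$ and continuity of $F$ at $c_l,c_u$, its probability tends to $F(c_u)-F(c_l-)=1-\delta$. The zero-denominator convention is harmless because $Z_2(T)>0$ eventually, by the crossing condition and the limit.

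The joint convergence in the second step is the main obstacle, and in particular the continuity of the hitting functional under $M_2$ rather than $J_1$. We would first attempt to reuse the proof of Theorem~\ref{thm: stop_prop}(a) verbatim on the product space.
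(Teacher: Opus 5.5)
Your proposal is correct and follows essentially the same route as the paper: rescale by $a_\varepsilon$, get the joint limit of $(U_\varepsilon,V_\varepsilon,a_\varepsilon^{-1}T_c(\varepsilon))$ by rerunning the grid inverse-map argument of Theorem~\ref{thm: stop_prop}(a) jointly with the FCLT and localizing with Assumption~\ref{asmp: early_stopping}, evaluate at $\Phi_c(1)$ using predictability plus quasi-left-continuity for $Y_1$ and the no-negative-jump argument of Lemma~\ref{lem: continuity_stop} for $Y_2$, then use self-similarity for invariance in $c$ and the Portmanteau theorem for coverage. Two small slips need fixing, neither of which affects the argument: the normalization satisfies $a_\varepsilon^{-h}\ell(a_\varepsilon)\sim(\varepsilon a_\varepsilon)^{-1}$ rather than $\varepsilon a_\varepsilon$ (with the correct value, the crossing level is $\varepsilon A(a_\varepsilon)\to1$, which justifies your threshold-$1$ functional), and because Assumption~\ref{asmp: early_stopping} is only a probability bound, the Skorokhod construction yields $\tau_\varepsilon\to\Phi_c(1)$ in probability rather than almost surely, which still suffices for the joint convergence in distribution.
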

Since $c^*=c_u-c_l$, the width criterion for $\text{CI}(\varepsilon)$ is exactly the criterion monitored by $T_{c^*}(\varepsilon)$. Theorem~\ref{thm: asymp_valid} is an oracle result: its stopping coefficient and quantiles are deterministic. The next subsection estimates these quantities online and establishes the validity of the resulting feasible procedure.

\subsection{Sequential Subsampling}
\label{sec: sequential_subsampling}
The oracle result in Theorem~\ref{thm: asymp_valid} depends on unknown
quantiles of the random-time limit. Ordinary fixed-sample-size subsampling
does not directly estimate this distribution. The invariance of
$W_c$ is essential here: since $W_c\overset{d}{=}W_1$ for every $c>0$, all
local procedures may use the fixed coefficient $1$, while their terminal
statistics still target the same distribution $F$. We therefore run a sequence of
shorter, independent stopping procedures while the main simulation is in
progress and use their terminal statistics to estimate the required
quantiles. The estimates are updated whenever a local procedure is completed.
Algorithm~\ref{alg: general} summarizes the resulting online procedure.
\begin{algorithm}[H]
    \caption{Sequential stopping with online subsampling}
    \label{alg: general}
    \begin{algorithmic}[1]
    \REQUIRE Accuracy level $\varepsilon>0$, target coverage $1-\delta$ with
    $\delta\in(0,1)$,
    exponents $r,\nu$ satisfying $0<r<1$ and
    $0<\nu<\frac{1}{1-h}$, and lower bound $\underline{c}>0$.
    \STATE Initialize the current local-procedure index $J=1$, its current
    sample size $m=0$, and an independent local simulator.
    \FOR{$n=1,2,\ldots$}
        \STATE Generate $X_n$ and update the global statistics $Z_1(n)$ and $Z_2(n)$.
        \STATE Generate $X_{m+1}^{(J)}$ from the current local simulator, set
        $m\leftarrow m+1$, and update $Z_1^{(J)}(m)$ and $Z_2^{(J)}(m)$.
        \IF{$m> \varepsilon^{-r\nu}$ and
        $\sqrt{\frac{Z_2^{(J)}(m)}{m^2}}<\varepsilon^{r}$}
            \STATE Set $T^{(J)}(\varepsilon^r)=m$ and store the corresponding
            terminal statistics.
            \STATE Set $J\leftarrow J+1$ and $m\leftarrow0$, and initialize a
            new independent local simulator.
        \ENDIF            
        \IF{$J\ge2$}
            \FOR{$j=1,2,\ldots,J-1$}
                \STATE Compute $\displaystyle \omega_{j,n}(\varepsilon)=
                \frac{Z_1^{(j)}(T^{(j)}(\varepsilon^r))-
                \frac{T^{(j)}(\varepsilon^r)}{n}Z_1(n)}
                {\sqrt{Z_2^{(j)}(T^{(j)}(\varepsilon^r))}}$.
            \ENDFOR
            \STATE Set $\displaystyle \widehat{F}_n(x;\varepsilon)=
            \frac{1}{J-1}\sum_{j=1}^{J-1}
            \mathbbm{1}\{\omega_{j,n}(\varepsilon)\le x\}$.
            \STATE Set $\displaystyle c_{l,n}(\varepsilon)=\inf\{x:\widehat F_n(x;\varepsilon)\ge\frac{\delta}{2}\}$ and $\displaystyle c_{u,n}(\varepsilon)=\inf\{x:\widehat F_n(x;\varepsilon)\ge1-\frac{\delta}{2}\}$.
            \STATE Set $c_n(\varepsilon)=\max\{\underline c,c_{u,n}(\varepsilon)-c_{l,n}(\varepsilon)\}$.
            \IF{$n>\varepsilon^{-\nu}$ and $\sqrt{\frac{Z_2(n)}{n^2}}<\frac{\varepsilon}{c_n(\varepsilon)}$}
                \STATE Set $T(\varepsilon)=n$ and $B(\varepsilon)=J-1$, and terminate.
            \ENDIF
        \ENDIF
    \ENDFOR
    \end{algorithmic}
\end{algorithm}
On nontermination, the normalized stopping time and terminal quantities in
the conclusions below are assigned fixed finite values. This convention is
asymptotically irrelevant under the conditions below.
\begin{thm}
\label{thm: consistency}
    In Algorithm~\ref{alg: general}, suppose that the following conditions hold:
    \begin{enumerate}
        \item[(a)] Assumptions~\ref{asmp: fclt} and~\ref{asmp: early_stopping} hold for the global and local simulation outputs;
        \item[(b)] the local simulation streams are independent copies and are independent of the global stream;
        \item[(c)] the common distribution function $F$ in
        Theorem~\ref{thm: asymp_valid} is continuous, and its
        $\frac{\delta}{2}$ and $\left(1-\frac{\delta}{2}\right)$ quantiles $c_l$ and $c_u$ are uniquely
        identified: for every $\xi>0$,
        \begin{align*}
            F(c_l-\xi)&<\frac{\delta}{2}<F(c_l+\xi),\qquad
            F(c_u-\xi)<1-\frac{\delta}{2}<F(c_u+\xi);
        \end{align*}
        \item[(d)] $r\in\left(0,1\right)$;
        \item[(e)] $0<\underline c<c_u-c_l$.
    \end{enumerate}
    Assume that $\Phi_1(1)>0$ almost surely and that $\Phi_1(\cdot)$ is almost surely continuous at $1$ and $(c^*)^{-1}$. Then, as $\varepsilon\to0^+$,
    \begin{align}
        \bigl(c_{l,T(\varepsilon)}(\varepsilon),c_{u,T(\varepsilon)}(\varepsilon)\bigr)
        \topb(c_l,c_u).
        \label{eq: terminal_quantile_consistency}
    \end{align}
    Define the feasible confidence interval
    \begin{align}
        \widehat{\text{CI}}(\varepsilon)
        :=\left[
        \frac{Z_1(T(\varepsilon))}{T(\varepsilon)}
        -c_{u,T(\varepsilon)}(\varepsilon)\sqrt{\frac{Z_2(T(\varepsilon))}{T(\varepsilon)^2}},
        \frac{Z_1(T(\varepsilon))}{T(\varepsilon)}
        -c_{l,T(\varepsilon)}(\varepsilon)\sqrt{\frac{Z_2(T(\varepsilon))}{T(\varepsilon)^2}}
        \right].
        \label{eq: feasible_ci}
    \end{align}
    Then
    \begin{align}
        \PB\bigl(\mu\in\widehat{\text{CI}}(\varepsilon)\bigr)
        \to1-\delta.
        \label{eq: feasible_coverage}
    \end{align}
\end{thm}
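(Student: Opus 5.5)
The plan is to reduce the feasible procedure to the oracle result, Theorem~\ref{thm: asymp_valid}. The reduction is a sandwich argument: with high probability the random coefficient $c_n(\varepsilon)$ stays within $\xi$ of $c^*$ throughout the relevant window of $n$. The global stopping time is then bracketed between the oracle stopping times $T_{c^*-\xi}(\varepsilon)$ and $T_{c^*+\xi}(\varepsilon)$, and we let $\xi\downarrow0$.

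\textbf{Step 1 (local procedures).} By Theorem~\ref{thm: stop_prop}(a),(b) applied with coefficient $1$ at accuracy $\varepsilon^r$, each local length satisfies $T^{(j)}(\varepsilon^r)\approx a_{\varepsilon^r}\Phi_1(1)$. Since $A$ is regularly varying with index $1-h$, we have $a_{\varepsilon^r}/a_\varepsilon\to0$ because $r<1$. By Theorem~\ref{thm: stop_prop}(a), the global stopping time is of order $a_\varepsilon$. Assumption~\ref{asmp: early_stopping} keeps it from being below $\eta a_\varepsilon$ with high probability, uniformly over coefficients in a compact subset of $[\underline c,\infty)$.

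The local procedures are i.i.d.\ by condition (b), and their lengths are of order $a_{\varepsilon^r}$. A renewal/law of large numbers argument then shows that the number of completed local procedures by time $\eta a_\varepsilon$ tends to infinity in probability, at least of order $a_\varepsilon/a_{\varepsilon^r}$.

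\textbf{Step 2 (uniform quantile consistency).} Fix $n$ in the window $[\eta a_\varepsilon, Ka_\varepsilon]$. Write $\omega_{j,n}=W^{(j)}-R_{j,n}$, where
\[
W^{(j)}=\frac{Z_1^{(j)}(T^{(j)})-T^{(j)}\mu}{\sqrt{Z_2^{(j)}(T^{(j)})}},\qquad R_{j,n}=\frac{T^{(j)}\bigl(Z_1(n)/n-\mu\bigr)}{\sqrt{Z_2^{(j)}(T^{(j)})}}.
\]
By Theorem~\ref{thm: asymp_valid} with $c=1$, each $W^{(j)}$ converges in distribution to $W_1\sim F$. The $W^{(j)}$ are i.i.d.\ and independent of the global stream.

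For the remainder, $\sqrt{Z_2^{(j)}(T^{(j)})}/T^{(j)}$ is of order $\varepsilon^r$ at termination, by the stopping rule together with the jump/downcrossing continuity. Meanwhile Assumption~\ref{asmp: fclt} gives
\[
\sup_{n\in[\eta a_\varepsilon,Ka_\varepsilon]}\left|\frac{Z_1(n)}{n}-\mu\right|=O_p(\varepsilon).
\]
Hence $R_{j,n}=O_p(\varepsilon^{1-r})\to0$ uniformly in $n$. To make this uniform over $j$ as well, I would use the event that $\sqrt{Z_2^{(j)}(T^{(j)})}/T^{(j)}\ge \varepsilon^r/2$ holds for most $j$. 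A triangular-array weak law for the empirical distribution of i.i.d.\ variables converging in law to the continuous $F$, combined with a Glivenko--Cantelli-type bound, then gives
\[
\sup_{n\in[\eta a_\varepsilon,Ka_\varepsilon]}\sup_x\bigl|\widehat F_n(x;\varepsilon)-F(x)\bigr|\topb0.
\]
Condition (c) turns this into uniform convergence of $c_{l,n}$ and $c_{u,n}$ to $c_l$ and $c_u$. By (e), it follows that $c_n(\varepsilon)\to c^*$ uniformly on the window.

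\textbf{Step 3 (sandwich).} On the high-probability event from Step 2, monotonicity of the stopping criterion in the coefficient gives
\[
T_{c^*-\xi}(\varepsilon)\le T(\varepsilon)\le T_{c^*+\xi}(\varepsilon),
\]
provided both bounds lie in the window. Theorem~\ref{thm: stop_prop}(a),(b) together with continuity of $\Phi_1$ at $1/c^*$ (a rescaling of $\Phi_{c^*}$ at $1$) imply that $a_\varepsilon^{-1}T_{c^*\pm\xi}(\varepsilon)\to\Phi_{c^*\pm\xi}(1)$. As $\xi\to0$, both limits approach $\Phi_{c^*}(1)$ in probability.

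Then $a_\varepsilon^{-1}T(\varepsilon)\topb$-couples to $\Phi_{c^*}(1)$, and the argument of Theorem~\ref{thm: asymp_valid} yields
\[
\frac{Z_1(T(\varepsilon))-T(\varepsilon)\mu}{\sqrt{Z_2(T(\varepsilon))}}\tod W_{c^*}.
\]
That argument is continuous mapping under the WM$_2$ convergence, evaluated at a predictable time where $Y_1$ has no jump by Assumption~\ref{asmp: fclt}(d). Combining this with the quantile consistency from Step 2 via Slutsky and the continuity of $F$ at $c_l$ and $c_u$ gives coverage $F(c_u)-F(c_l)=1-\delta$.

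\textbf{Main obstacle.} The hardest part is Step 2: establishing uniformity over the random index $n=T(\varepsilon)$, which depends on the estimated quantiles themselves. I would handle this by working on deterministic windows and taking suprema, so that independence between the local streams and the global stream can be used before plugging in $T(\varepsilon)$.
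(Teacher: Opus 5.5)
Your proposal is correct and follows the paper's proof. The shared structure is the oracle-plus-replacement decomposition $\omega_{j,n}=\overline\omega_{j,\varepsilon}-R_{j,\varepsilon}\,\varepsilon^{-r}\bigl(Z_1(n)/n-\mu\bigr)$, with $R_{j,\varepsilon}$ tight and $\sup_{\eta a_\varepsilon\le n\le Ma_\varepsilon}|Z_1(n)/n-\mu|=O_{\PB}(\varepsilon)$; uniform convergence of $\widehat F_n$ and of the quantiles on a deterministic window; and the sandwich $T_{c^*-\zeta}(\varepsilon)\le T(\varepsilon)\le T_{c^*+\zeta}(\varepsilon)$, with $T_{\underline c}(\varepsilon)\le T(\varepsilon)$ providing the lower localization.

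Three minor points need tightening, none of them a gap. First, the empirical-distribution bounds must hold simultaneously for all sample sizes $m\ge N$, because $B_n(\varepsilon)$ is random and depends on the local streams; the paper gets this from DKW and Hoeffding with a union bound over $m$. Second, $\zeta$ should be taken along values at which $\Phi_1$ is a.s.\ continuous at $(c^*\pm\zeta)^{-1}$. Third, your ``at least of order $a_\varepsilon/a_{\varepsilon^r}$'' claim is not needed, and it is not justified since the assumptions give no moment control on the local stopping times; only $B_{\lfloor\eta a_\varepsilon\rfloor}(\varepsilon)\topb\infty$ is used.
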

\begin{figure}[H]
    \centering
    \includegraphics[width=1.0\linewidth]{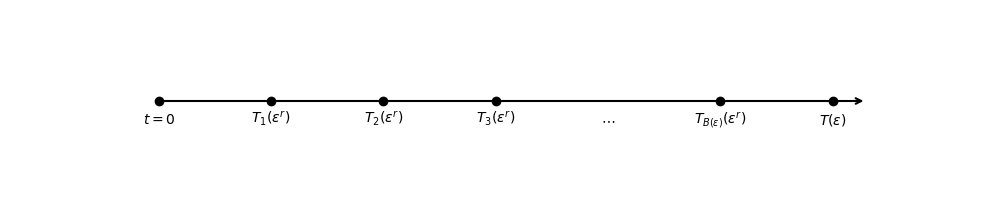}
    \caption{Sequential subsampling. While the global procedure runs at
    accuracy level $\varepsilon$, independent local procedures are run
    sequentially at the coarser accuracy level $\varepsilon^r$, where
    $0<r<1$. Their stopping times have a smaller asymptotic order than the
    global stopping time. A new local procedure starts immediately after the
    preceding one terminates, and $B(\varepsilon)$ denotes the number completed
    by the time the global procedure stops.}
    \label{fig: sub}
\end{figure}
Theorem~\ref{thm: consistency} shows that sequential subsampling consistently
estimates the terminal quantiles and yields an asymptotically valid confidence
interval. The constraint $c_n(\varepsilon)\ge\underline c$ controls unstable
early estimates of the quantile width, while condition~(e) ensures that it is
asymptotically inactive. 

%% file: tex/alt.tex
This section develops several choices of the scaling process $Z_2(\cdot)$ for the general sequential framework established in the last section. We first consider the empirical sum of squares and then introduce sectioning, batch means, and random scaling as alternative constructions.
\subsection{Sum of Squares}
\label{sec: ss}
Consider estimation of the mean $\mu$ of an i.i.d.\ sequence
$\{X_i\}_{i\ge1}$ and define
\begin{align}
    Z_1(t)&:=\sum_{i=1}^{\lfloor t\rfloor}X_i,
    \label{eq: ss1}\\
    Z_2(t)&:=
    \begin{cases}
        \displaystyle\sum_{i=1}^{\lfloor t\rfloor}
        \left(X_i-\frac{Z_1(t)}{\lfloor t\rfloor}\right)^2,
        &t\ge1,\\
        0,&0\le t<1.
    \end{cases}
    \label{eq: ss2}
\end{align}
Thus, $Z_1$ and $Z_2$ are the c\`adl\`ag step embeddings of the partial
sum and centered sum of squares. The following results verify
Assumptions~\ref{asmp: fclt} and~\ref{asmp: early_stopping} in both the
finite- and infinite-variance settings.
\begin{thm}
\label{thm: iid_finite}
    For an i.i.d. sequence $\{X_i\}_{i\ge1}$ with mean
    $\EB[X_1]=\mu$ and variance $\Var(X_1)=\sigma^2\in(0,\infty)$,
    Assumption~\ref{asmp: fclt} holds with
    $h=\frac{1}{2}$, $\ell\equiv1$,
    $Y_1(\cdot)=\sigma B(\cdot)$, and
    $Y_2(\cdot)=\sigma^2\idx(\cdot)$, where $B$ is a standard
    Brownian motion. Moreover,
    Assumption~\ref{asmp: early_stopping} holds for every
    $0<\nu<2$.
\end{thm}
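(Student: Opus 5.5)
We verify the conditions of Assumption~\ref{asmp: fclt} one at a time, then treat Assumption~\ref{asmp: early_stopping} separately using a strong law argument.

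\textbf{Functional limit.} With $h=\frac12$ and $\ell\equiv1$, the first coordinate of \eqref{eq: fclt} is $\varepsilon^{1/2}\bigl(\sum_{i\le \lfloor \cdot/\varepsilon\rfloor}X_i-\mu\lfloor\cdot/\varepsilon\rfloor\bigr)$ plus a correction $\varepsilon^{1/2}\mu(\lfloor t/\varepsilon\rfloor-t/\varepsilon)$. This correction is uniformly $O(\varepsilon^{1/2})$. Donsker's theorem gives convergence of the first coordinate to $\sigma B$ in $J_1$, hence in $M_2$.

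For the second coordinate, write $Z_2(t)=\sum_{i\le\lfloor t\rfloor}(X_i-\mu)^2-\lfloor t\rfloor(\bar X_{\lfloor t\rfloor}-\mu)^2$. The functional strong law of large numbers gives $\sup_{s\le S}|\varepsilon\sum_{i\le\lfloor s/\varepsilon\rfloor}(X_i-\mu)^2-\sigma^2 s|\to0$ almost surely for every $S$. The correction term is $\varepsilon\lfloor s/\varepsilon\rfloor(\bar X-\mu)^2$; it is uniformly small on $[0,S]$ by the SLLN for large indices and trivial boundedness for small indices. Hence $\varepsilon Z_2(\cdot/\varepsilon)\to\sigma^2\idx$ uniformly on compacts almost surely. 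Taking square roots is uniformly continuous on $\RB_+$, so $\varepsilon^{1/2}\sqrt{Z_2(\cdot/\varepsilon)}\to\sigma\sqrt{\idx}$. The limit is deterministic and continuous, so joint convergence follows from the marginal convergences, in $J_1$ and therefore in $\mathrm{WM}_2$.

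\textbf{Conditions (a)--(e).} Condition (a) holds since $h=\frac12$. For (b), $Z_1(t)/t\to\mu$ by the SLLN. Also $Z_2(t)/t\to\sigma^2$ almost surely, so $Z_2(t)/t^2\to0$. For (c), Brownian scaling gives $(\sigma B(a\cdot),\sigma^2 a\,\idx)\overset{d}{=}(a^{1/2}\sigma B,a\sigma^2\idx)$. For (d), $B$ has continuous paths, so $\Delta Y_1\equiv0$. For (e), $\sigma^2 t>0$ for $t>0$, and the path is continuous and increasing.

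\textbf{Early stopping.} Here $A(x)=x^{1/2}$, so $a_\varepsilon=\varepsilon^{-2}$, and we need $0<\nu<2$. On $\varepsilon^{-\nu}\le n\le\eta\varepsilon^{-2}$ we have $\sqrt{Z_2(n)}/n\ge \sqrt{Z_2(n)/n}\,\eta^{-1/2}\varepsilon$.

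Since $n\ge\varepsilon^{-\nu}\to\infty$, the almost sure convergence $Z_2(n)/n\to\sigma^2$ gives
\begin{align}
\PB\Bigl(\inf_{n\ge\varepsilon^{-\nu}}Z_2(n)/n<\sigma^2/2\Bigr)\to0 .
\end{align}
On the complementary event,
\begin{align}
\inf_{\varepsilon^{-\nu}\le n\le \eta a_\varepsilon}\frac{\sqrt{Z_2(n)}}{n}\ge \frac{\sigma}{\sqrt{2\eta}}\,\varepsilon,
\end{align}
which exceeds $K\varepsilon$ once $\eta<\sigma^2/(2K^2)$. Hence the $\limsup$ in \eqref{eq: early_stopping} is $0$ for all small $\eta$.

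The main point needing care is the uniform-on-compacts control of the centering correction near time $0$ in the $Z_2$ limit. This is handled by bounding $\max_{n\le N}n(\bar X_n-\mu)^2$ by a fixed random quantity times $\varepsilon$, and using the SLLN for $n>N$.
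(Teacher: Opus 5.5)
Your proposal is correct and follows essentially the same route as the paper: Donsker's theorem plus the functional law of large numbers for the sample variance, with joint convergence because the second limit is deterministic, and the strong law for condition~(b). For Assumption~\ref{asmp: early_stopping} you make the same choice $\eta<\sigma^2/(2K^2)$, which reduces the event to $\inf_{n\ge\varepsilon^{-\nu}}Z_2(n)/n<\sigma^2/2$; your explicit control of the sample-centering term near time zero fills in a detail the paper only cites.
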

When $X_1$ has infinite variance and its distribution satisfies
Assumption~\ref{asmp: domain}, the limiting processes and normalization
change, but the same conclusion holds.
\begin{thm}
\label{thm: iid_infinite}
    Let $\{X_i\}_{i\ge1}$ be i.i.d.\ with mean $\EB[X_1]=\mu$, and
    suppose that its common distribution satisfies
    Assumption~\ref{asmp: domain} with $\alpha\in(1,2)$. Then
    Assumption~\ref{asmp: fclt} holds with $h=\frac{1}{\alpha}$, a
    suitable slowly varying function $\ell$, $Y_1=L_1$, and $Y_2=L_2$,
    where $L_1$ is an $\alpha$-stable L\'evy process and $L_2$ is an
    $\frac{\alpha}{2}$-stable subordinator. Moreover,
    Assumption~\ref{asmp: early_stopping} holds for every
    $0<\nu<\frac{\alpha}{\alpha-1}$.
\end{thm}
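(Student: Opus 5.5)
We verify the conditions of Assumption~\ref{asmp: fclt} one at a time for Theorem~\ref{thm: iid_infinite}, and then Assumption~\ref{asmp: early_stopping}.

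\emph{Normalization and joint convergence.} Set $s_n$ as in Assumption~\ref{asmp: domain}. Since $\alpha\in(1,2)$ and $\EB X_1=\mu$, we may take $d_n$ so that the centering is $n\mu/s_n$. Here $s_n=n^{1/\alpha}\tilde\ell(n)$ is regularly varying with index $1/\alpha$. With $h=\frac1\alpha$ and $\ell:=1/\tilde\ell$, we have $\varepsilon^h\ell(\varepsilon^{-1})=1/s_{\varepsilon^{-1}}$ asymptotically.

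The key input is the joint functional limit theorem for the point process $\sum_i\delta_{(i\varepsilon,\,X_i/s_{1/\varepsilon})}\Rightarrow\mathrm{PRM}$ (regular variation of the tails, Resnick). Applying the summation functional gives
\begin{align}
\Bigl(s_{1/\varepsilon}^{-1}\textstyle\sum_{i\le \cdot/\varepsilon}(X_i-\mu),\; s_{1/\varepsilon}^{-2}\sum_{i\le\cdot/\varepsilon}(X_i-\mu)^2\Bigr)\Rightarrow (L_1,L_2)
\end{align}
in $J_1$ on $\mathbb D^2$. This uses the usual truncation argument: small jumps of the first coordinate are controlled by Karamata bounds plus centering, since $\alpha>1$; small jumps of the second coordinate are negligible without centering, since $\alpha/2<1$. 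Here $L_1$ is $\alpha$-stable Lévy, $L_2=\sum (\Delta L_1)^2$ is an $\frac\alpha2$-stable subordinator, and both are driven by the same Poisson measure.

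Next, replace $(X_i-\mu)^2$ by the centered sum of squares $Z_2$:
\begin{align}
Z_2(t)=\textstyle\sum(X_i-\mu)^2-\lfloor t\rfloor(\bar X_{\lfloor t\rfloor}-\mu)^2 .
\end{align}
The correction equals $(Z_1-\mu\lfloor t\rfloor)^2/\lfloor t\rfloor$. After dividing by $s_{1/\varepsilon}^2$, it is of order $O_p(1)\cdot\varepsilon/t$, uniformly on $[\eta,K]$. On $[0,\eta]$ it is bounded by $\sup_{n\le\eta/\varepsilon}(\cdots)$, which still tends to $0$ because $\max_{n\le N}|S_n-n\mu|^2/n\le \max|S_n-n\mu|^2$ is $O_p(s_N^2)$ only up to a factor $1/n$ at small $n$. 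This is handled by splitting at $n\le \varepsilon^{-1/2}$. The square root is continuous, so $J_1$ convergence, and hence $\mathrm{WM}_2$, follows.

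\emph{Conditions (a)--(e).}
\begin{itemize}
\item[(a)] $h=1/\alpha\in(0,1)$.
\item[(b)] The strong law gives $(Z_1(t)-\mu t)/t\to0$. For $Z_2(t)/t^2\to0$ a.s., note $Z_2\le\sum(X_i-\mu)^2$ and $\EB|X_1-\mu|^{p}<\infty$ for some $p\in(1,\alpha)$. The Marcinkiewicz–Zygmund SLLN for $|X_i-\mu|^{2}$ with exponent $p/2<1$ gives $\sum_{i\le n}(X_i-\mu)^2=o(n^{2/p})=o(n^2)$.
\item[(c)] Joint self-similarity with indices $h,2h$ follows because $(L_1,L_2)$ is a functional of a Poisson measure with intensity $dt\times\nu_\alpha(dx)$ invariant under $(t,x)\mapsto(at,a^{1/\alpha}x)$.
\item[(d)] $(L_1,L_2)$ is a bivariate Lévy process, hence a Feller/Hunt process and quasi-left-continuous in its own augmented filtration.
\item[(e)] $L_2$ is a subordinator with infinite Lévy measure, so it is strictly increasing; thus it has no negative jumps and $L_2(t)>0$ for $t>0$ a.s.
\end{itemize}

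\emph{Early stopping (the main obstacle).} We need
\begin{align}
\PB\Bigl(\inf_{\varepsilon^{-\nu}\le n\le\eta a_\varepsilon}\sqrt{Z_2(n)}/n\le K\varepsilon\Bigr)\to0 .
\end{align}
Here $a_\varepsilon$ solves $n/s_n\approx\varepsilon^{-1}$, and $Z_2(n)\gtrsim \max_{i\le n}(X_i-\mu)^2/2$ once the mean correction is controlled.

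So it suffices to bound $\PB(\exists n\in[\varepsilon^{-\nu},\eta a_\varepsilon]:\ M_n\le 2K\varepsilon n)$, where $M_n=\max_{i\le n}|X_i-\mu|$. Cover the range by dyadic blocks $[2^k,2^{k+1}]$. On each block, the event is implied by $M_{2^k}\le 4K\varepsilon 2^k$, which has probability
\begin{align}
\bigl(1-\bar F(4K\varepsilon2^k)\bigr)^{2^k}\le\exp\bigl(-2^k\bar F(4K\varepsilon 2^k)\bigr).
\end{align}
By regular variation, $2^k\bar F(x2^k/s_{2^k}\cdot s_{2^k})\asymp (s_{2^k}/(\varepsilon 2^k))^{\alpha}$. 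This is at least of order $\eta^{-(1-h)\alpha}\cdot(a_\varepsilon/2^k)^{(1-h)\alpha-o(1)}$, which is large and grows geometrically as $k$ decreases from $\log_2(\eta a_\varepsilon)$. Summing the exponentials is dominated by the top block and tends to $0$ as $\eta\to0$.

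The lower end $\varepsilon^{-\nu}$ only serves to make $n$ large, so that regular-variation (Potter) bounds apply; this works for any $\nu<\alpha/(\alpha-1)=1/(1-h)$. The remaining point is the mean correction $(S_n-n\mu)^2/n$: it must not cancel $\sum(X_i-\mu)^2$. We handle this by applying the elementary bound $Z_2(n)\ge \tfrac12\max_i (X_i-\bar X_n)^2$ together with $|\bar X_n-\mu|=o(1)$ a.s., which is negligible against $M_n\to\infty$.
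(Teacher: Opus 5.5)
Your verification of Assumption~\ref{asmp: fclt} follows the paper. Both arguments use point-process convergence for the centered partial sums and the raw squares, the least-squares identity $Z_2(n)=\sum_{i\le n}(X_i-\mu)^2-(Z_1(n)-n\mu)^2/n$, and separate control of the correction near time zero. The paper bounds the correction by $\sum_{i\le n}(X_i-\mu)^2$ via Cauchy--Schwarz. Your bound $\max_{n\le N}(S_n-n\mu)^2=O_{\PB}(s_N^2)$ already suffices; the split at $\varepsilon^{-1/2}$ is optional and only gives uniform vanishing on all of $[0,K]$. For (b) you use Marcinkiewicz--Zygmund where the paper uses Lemma~\ref{lem:subquadratic_squares}; both work.

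The genuine difference is in Assumption~\ref{asmp: early_stopping}.
\begin{itemize}
\item The paper uses the location-free bound $Z_2(n)\ge\Delta_n^2/2$, with $\Delta_n$ the sample range. It then lower-bounds $\Delta_{r_j}$ by the disjoint pair differences $|X_{2i-1}-X_{2i}|$. This needs the tail equivalence $\PB(|X_1-X_2|>x)\sim2\PB(|X_1|>x)$ but no law of large numbers.
\item You use $\sqrt{Z_2(n)}\ge\max_{i\le n}|X_i-\mu|-|\bar X_n-\mu|$. You absorb the shift on the event $\{\sup_{n\ge\varepsilon^{-\nu}}|\bar X_n-\mu|\le1\}$, whose probability tends to one by the strong law. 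You then use the exact law $(1-\bar F(x))^n$ of the maximum, where $\bar F(x)=\PB(|X_1-\mu|>x)$ is regularly varying directly.
\end{itemize}
Your route avoids the tail asymptotics of $X_1-X_2$ at the cost of an extra strong-law step; the paper's range trick avoids that step entirely.

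One step needs repair. The lower cutoff $\varepsilon^{-\nu}$ does not by itself make the Potter bound for $\bar F(4K\varepsilon 2^k)$ applicable. Potter requires the \emph{argument} $4K\varepsilon2^k$ to be large. For $\nu\le1$, however, $\varepsilon2^k$ is bounded at the bottom of the range, and it tends to zero when $\nu<1$.

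The fix is the paper's split of the blocks according to whether $\varepsilon2^k\le M$:
\begin{itemize}
\item When $\varepsilon2^k\le M$: on your good event, the block event forces $M_{2^k}\le4K\varepsilon2^k+1\le4KM+1$. This has probability at most $\exp(-2^k\bar F(4KM+1))$. Since $\bar F(4KM+1)>0$, the sum of these bounds over $2^k\ge\varepsilon^{-\nu}/2$ vanishes.
\item When $\varepsilon2^k>M$: apply Potter.
\end{itemize}

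Two slips should also be corrected.
\begin{itemize}
\item The block event \emph{implies} $M_{2^k}\le4K\varepsilon2^k+1$ on the good event; it is not implied by it. The reverse direction would give the wrong inequality.
\item The Potter lower bound for $2^k\bar F(4K\varepsilon2^k)$ should be of order $(2^k/a_\varepsilon)^{-(\alpha-1-\delta)}$, which is at least $\eta^{-(\alpha-1-\delta)}$ when $2^k\le\eta a_\varepsilon$. Your expression carries an extra factor $\eta^{-(\alpha-1)}$, which overstates the bound.
\end{itemize}

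With these repairs your argument is complete.
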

It is worth noting that Algorithm~\ref{alg: general} exhibits no distinction between the finite-variance and infinite-variance cases, since the nuisance parameters $\alpha$ and $\ell(\cdot)$ are eliminated through self-normalization. Furthermore, the choice of $Z_2(\cdot)$ in~\eqref{eq: ss2} extends to short-range dependent sequences when the corresponding joint functional limit and regularity conditions hold. A detailed time-series example is given in Section~\ref{sec: eg}.

\subsection{Sectioning, Batch Means, and Random Scaling}
\label{sec: sec_bm}
Assumption~\ref{asmp: fclt} requires the estimation and scaling coordinates in~\eqref{eq: fclt} to use the same normalizing factor. However, in some settings, the empirical sum of squares in~\eqref{eq: ss2} does not match this normalization. This mismatch can arise when dependence causes the empirical sum of squares and the estimation error to have different stochastic orders, as illustrated by the M/G/1 example in Section~\ref{sec: eg}. We therefore construct alternative scaling processes directly from $Z_1$ by sectioning, batch means, and random scaling. For illustration, we can construct the scaling process as
\begin{align}
    Z_2(\cdot)=\mathcal L(Z_1)(\cdot),
    \label{eq: scaling_functional}
\end{align}
where a functional $\mathcal L$ in~\eqref{eq: scaling_functional} is chosen so that $\sqrt{Z_2}$ has the same scale as $Z_1(\cdot)-\mu \cdot$. Under this construction, $Z_2(\cdot)$ may have negative jumps, so a stopping rule based on $Z_2(\cdot)$ can terminate at a downward jump time. We therefore smooth $Z_2(\cdot)$ by time averaging:
\begin{align}
    Z_2^*(t)
    &:=\frac{1}{t}\int_0^t Z_2(u)\,\mathrm du,
    \qquad t>0,
    \label{eq: time_averaged_scaling}
\end{align}
with $Z_2^*(0):=0$. This smoothing preserves the original scaling: $Z_2^*$ has the same stochastic order as $Z_2$, and a $2h$-self-similar limit of $Z_2$ is mapped to a $2h$-self-similar limit. It can also be verified that the time average in~\eqref{eq: time_averaged_scaling} is continuous on $[0,\infty)$. The superscript $*$ denotes smoothing of the scaling process $Z_2$. To avoid some trivial cases such as the limiting process is zero with positive probability, we impose the following nondegeneracy condition for the limiting scaling processes throughout this subsection. 
\begin{align}
    \PB\left(Y_2^{*,\iota}(1)>0\right)=1, \qquad\iota\in\{\mathrm{sec},\mathrm{bm},\mathrm{rs}\}
    \label{eq: scaling_nondegeneracy}
\end{align}
By self-similarity and nondecreasing property of $tY_2^{*,\iota}(t)$, \eqref{eq: scaling_nondegeneracy} implies positivity at every $t>0$ almost surely for $Y_2^{*,\iota}(\cdot)$. 
\begin{thm}[Sectioning]
\label{thm: replication}
    Let $Z_1(\cdot)$ satisfy the marginal FCLT in~\eqref{eq: fclt} and the corresponding requirements of Assumption~\ref{asmp: fclt}, for some $\mu$, $h$, $\ell$, and $Y_1$.
    Let $Z_{1}^{(1)}(\cdot),\ldots,Z_{1}^{(m)}(\cdot)$ be $m\ge2$ independent copies of $Z_1(\cdot)$, with corresponding independent limits $Y_{1}^{(1)}(\cdot),\ldots,Y_{1}^{(m)}(\cdot)$, and suppose that the limiting vector is quasi-left-continuous with respect to its augmented natural filtration. Define
    \begin{gather}
        Z_1^{\text{sec}}(\cdot)=\bar Z_1(\cdot):=\frac{1}{m}\sum_{i=1}^m Z_1^{(i)}(\cdot),\\
        Z_2^{*,\text{sec}}(\cdot)=\frac{1}{m-1}\sum_{i=1}^m\frac{\int_0^{\cdot}\left(Z_{1}^{(i)}(s)-\bar{Z}_1(s)\right)^2ds}{\cdot},
    \end{gather}
    Then
    \begin{gather}
        \left(\varepsilon^{h}\ell(\varepsilon^{-1})\left(Z_1^{\text{sec}}\left(\frac{\cdot}{\varepsilon}\right)-\idx\left(\frac{\cdot}{\varepsilon}\right)\mu\right), \varepsilon^{h}\ell(\varepsilon^{-1})\sqrt{Z_2^{*,\text{sec}}\left(\frac{\cdot}{\varepsilon}\right)}\right)
        \overset{\mathrm{WM}_2}{\Rightarrow}
        \left(Y_1^{\text{sec}}(\cdot), \sqrt{Y_2^{*,\text{sec}}(\cdot)}\right),\\
        Y_1^{\text{sec}}(\cdot)=\bar Y_1(\cdot):=\frac{1}{m}\sum_{i=1}^mY_1^{(i)}(\cdot),\\
        Y_2^{*,\text{sec}}(\cdot)=\frac{1}{m-1}\sum_{i=1}^m\frac{\int_0^{\cdot}\left(Y_1^{(i)}(s)-\bar{Y}_1(s)\right)^2ds}{\cdot},
    \end{gather}
    where the time-averaged processes are defined to be zero at time zero.
    The process $Y_2^{*,\text{sec}}(\cdot)$ is continuous, and the pair
    $(Z_1^{\text{sec}},Z_2^{*,\text{sec}})$ satisfies
    Assumption~\ref{asmp: fclt}.
\end{thm}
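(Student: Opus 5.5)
The plan is to obtain the joint convergence from the independent marginal FCLTs via a continuous-mapping argument, and then to check conditions (a)--(e) of Assumption~\ref{asmp: fclt} one at a time.

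\emph{Joint convergence.} Write $\tilde Z^{(i)}_\varepsilon(\cdot):=\varepsilon^h\ell(\varepsilon^{-1})\bigl(Z_1^{(i)}(\cdot/\varepsilon)-\idx(\cdot/\varepsilon)\mu\bigr)$. Because the copies are independent and each converges in $M_2$ to $Y_1^{(i)}$, the vector $(\tilde Z^{(1)}_\varepsilon,\dots,\tilde Z^{(m)}_\varepsilon)$ converges to $(Y_1^{(1)},\dots,Y_1^{(m)})$ in the product (weak) $M_2$ topology. Independence gives convergence of finite-dimensional laws and product tightness.

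The first coordinate $\varepsilon^h\ell(\varepsilon^{-1})(\bar Z_1-\idx\mu)(\cdot/\varepsilon)$ equals $m^{-1}\sum_i\tilde Z^{(i)}_\varepsilon$. Addition is not $M_2$-continuous on the product space in general, but it is continuous at limit points whose components have no common jump times of opposite sign. I would settle this by quasi-left-continuity and independence: the independent processes $Y_1^{(i)}$ a.s.\ share no jump times. Each has only countably many jumps, and the jump times of an independent copy avoid any fixed countable set, because quasi-left-continuity excludes fixed discontinuities. This is the step I expect to be the main obstacle. It needs a careful statement of the continuity of addition in $M_2$ under the no-common-jump condition (Whitt, Thm.~12.7.3 type), and possibly a Skorohod representation argument.

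For the second coordinate, note that the centered differences cancel $\mu$:
\[
\varepsilon^{2h}\ell(\varepsilon^{-1})^2Z_2^{*,\mathrm{sec}}(t/\varepsilon)
=\frac{1}{m-1}\sum_i\frac1t\int_0^t\bigl(\tilde Z^{(i)}_\varepsilon(u)-\bar{\tilde Z}_\varepsilon(u)\bigr)^2du .
\]
This holds after the change of variables $s=u/\varepsilon$. The map $x\mapsto t^{-1}\int_0^t x(u)^2du$ is continuous from $M_2$ into $C$ with the uniform-on-compacts topology, and it stays continuous at $t\downarrow0$ using local boundedness near $0$ and $x(0)=0$. The reason is that $M_2$ convergence implies local uniform boundedness and pointwise convergence off a countable set, so dominated convergence applies. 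Composing with the square root then gives the joint $\mathrm{WM}_2$ convergence, since the second component converges even uniformly.

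\emph{Conditions (a)--(e).} Condition~(a) is inherited. For~(b), the first part follows by averaging the strong law for each copy. For the second part, $|Z_1^{(i)}(s)-\bar Z_1(s)|=o(s)$ a.s., so $\int_0^t(\cdot)^2ds=o(t^3)$ and $Z_2^*(t)=o(t^2)$. For~(c), joint self-similarity of the independent vector $(Y_1^{(i)})_i$ with index $h$ carries over to $\bar Y_1$. A substitution $s=au$ in the integral shows that $Y_2^{*,\mathrm{sec}}(a\cdot)$ scales by $a^{2h}$, jointly with $\bar Y_1$. For~(d), a predictable stopping time of the filtration generated by $(\bar Y_1,Y_2^{*,\mathrm{sec}})$ is predictable for the larger filtration generated by the vector $(Y_1^{(i)})$. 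Here I would use that the augmented filtration of the pair is contained in that of the vector, because $Y_2^*$ is a measurable functional of the vector. Quasi-left-continuity of the vector then gives $\Delta\bar Y_1(\tau)=0$ a.s.

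For~(e), continuity of $Y_2^{*,\mathrm{sec}}$ on $(0,\infty)$ is clear because it is an integral divided by $t$. At $0$, the bound $t^{-1}\int_0^t(\cdot)^2\le\sup_{[0,t]}(\cdot)^2\to0$ gives continuity, and continuity excludes negative jumps. Positivity for all $t>0$ follows from the nondegeneracy condition \eqref{eq: scaling_nondegeneracy} together with self-similarity and monotonicity of $tY_2^*(t)$. I would make this precise as follows. Set $\tau:=\inf\{t:tY_2^*(t)>0\}$. Self-similarity gives $\PB(\tau>t)=\PB(\tau>1)=0$ for each $t>0$. Taking rational $t\downarrow0$ yields $\tau=0$ a.s., and monotonicity then gives $Y_2^*(t)>0$ for all $t>0$ a.s.
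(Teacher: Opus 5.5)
Your proposal is correct and follows essentially the same route as the paper: product $M_2$ convergence from independence, a.s.\ disjoint jump sets from quasi-left-continuity to pass to the strong $\mathrm{SM}_2$ topology so that averaging is continuous (the paper cites Whitt's Corollary and Theorem 12.11.4, the $M_2$ analogues of the result you allude to), convergence of the quadratic-contrast integrals plus a separate near-zero bound, and then a direct check of (a)--(e). The small additions are that you spell out the positivity argument from the nondegeneracy condition, which the paper only asserts before the theorem, and one precision point: the near-zero control needs completed-graph convergence together with right-continuity of the limit at $0$ (so that $\sup_{[0,\delta]}|x_n|$ is uniformly small), not just local boundedness.
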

\begin{thm}[Batch Means]
\label{thm: batch_mean}
    Let $Z_1(\cdot)$ satisfy the marginal FCLT in~\eqref{eq: fclt} and the corresponding requirements of Assumption~\ref{asmp: fclt}, for some $\mu$, $h$, $\ell$, and $Y_1$.
    Define
    \begin{gather}
        Z_1^{\text{bm}}(\cdot)=Z_1(\cdot),\\
        Z_2^{*,\text{bm}}(\cdot)=\frac{1}{m-1}\sum_{k=1}^m\frac{\int_0^{\cdot}\left(Z_{1,k}^{\mathrm{bm}}(s)-\frac{1}{m}Z_1(s)\right)^2ds}{\cdot},
    \end{gather}
    where $Z_{1,k}^{\mathrm{bm}}(\cdot)=Z_1\left(\frac{k}{m}\cdot\right)-Z_1\left(\frac{k-1}{m}\cdot\right)$ and $m\ge2$ is fixed. Then
    \begin{gather}
        \left(\varepsilon^{h}\ell(\varepsilon^{-1})\left(Z_1^{\text{bm}}\left(\frac{\cdot}{\varepsilon}\right)-\idx\left(\frac{\cdot}{\varepsilon}\right)\mu\right), \varepsilon^{h}\ell(\varepsilon^{-1})\sqrt{Z_2^{*,\text{bm}}\left(\frac{\cdot}{\varepsilon}\right)}\right)
        \overset{\mathrm{WM}_2}{\Rightarrow}
        \left(Y_1^{\text{bm}}(\cdot), \sqrt{Y_2^{*,\text{bm}}(\cdot)}\right),\\
        Y_1^{\text{bm}}(\cdot)=Y_1(\cdot),\\
        Y_2^{*,\text{bm}}(\cdot)=\frac{1}{m-1}\sum_{k=1}^m\frac{\int_0^{\cdot}\left(Y_{1}\left(\frac{k}{m}s\right)-Y_1\left(\frac{(k-1)}{m}s\right)-\frac{1}{m}Y_1(s)\right)^2ds}{\cdot}.
    \end{gather}
    The time-averaged processes are defined to be zero at time zero. The
    process $Y_2^{*,\text{bm}}(\cdot)$ is continuous, and the pair
    $(Z_1^{\text{bm}},Z_2^{*,\text{bm}})$ satisfies
    Assumption~\ref{asmp: fclt}.
\end{thm}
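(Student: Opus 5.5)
The proof treats the batch-means scaling as a functional of $Z_1$ alone and pushes the marginal FCLT through it by the continuous mapping theorem. Write $X^\varepsilon(\cdot):=\varepsilon^{h}\ell(\varepsilon^{-1})\bigl(Z_1(\cdot/\varepsilon)-\mu\,\cdot/\varepsilon\bigr)$, so that $X^\varepsilon\Rightarrow Y_1$ in $M_2$ (the first coordinate of \eqref{eq: fclt}).

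The first step is an algebraic cancellation. Since each batch increment has drift $\mu s/m$ and so does $\frac1m Z_1(s)$, the centering cancels:
\begin{align}
Z_{1,k}^{\mathrm{bm}}(s)-\tfrac1m Z_1(s)=\widetilde Z\bigl(\tfrac km s\bigr)-\widetilde Z\bigl(\tfrac{k-1}m s\bigr)-\tfrac1m\widetilde Z(s),
\end{align}
where $\widetilde Z(t)=Z_1(t)-\mu t$. A change of variables $u=s\varepsilon$ in the time integral then gives the exact identity
\begin{align}
\bigl(\varepsilon^h\ell(\varepsilon^{-1})\bigr)^2 Z_2^{*,\mathrm{bm}}(t/\varepsilon)=\Psi(X^\varepsilon)(t),
\end{align}
where
\begin{align}
\Psi(x)(t):=\frac{1}{(m-1)t}\sum_{k=1}^m\int_0^t\Bigl(x\bigl(\tfrac km u\bigr)-x\bigl(\tfrac{k-1}m u\bigr)-\tfrac1m x(u)\Bigr)^2du .
\end{align}
Hence the pair of interest is $G(X^\varepsilon):=\bigl(X^\varepsilon,\sqrt{\Psi(X^\varepsilon)}\bigr)$, and it suffices to show that $G$ is continuous, as a map into $\mathbb D^2$ with the $\mathrm{WM}_2$ topology, at almost every path of $Y_1$.

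The second step, and the main obstacle, is this continuity of $\Psi$. $M_2$ convergence $x_n\to x$ gives local uniform boundedness on compacts and pointwise convergence at continuity points of $x$. A composition $x_n(\tfrac km u)$ converges at every $u$ where $\tfrac km u$ is a continuity point of $x$. For a càdlàg $x$ the discontinuity set is countable, so convergence holds for Lebesgue-a.e.\ $u$, and dominated convergence gives $\Psi(x_n)(t)\to\Psi(x)(t)$ for every $t>0$. This convergence is in fact locally uniform on $(0,T]$, since $\int_0^t$ is equicontinuous in $t$ by the uniform bound. At $t=0$, the local uniform bound near zero together with $x(0)=0$ and right-continuity gives $\Psi(x_n)(t)\to0$ uniformly as $t\downarrow0$. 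The expected delicacy is that $M_2$ does not control time-dilated compositions directly; the a.e.\ argument above avoids needing that. The limit $\Psi(x)$ is continuous, so convergence in the second coordinate is even uniform on compacts, and the joint convergence in $\mathrm{WM}_2$ follows, since WM is the product topology. The continuous mapping theorem then yields the stated limit $Y_2^{*,\mathrm{bm}}=\Psi(Y_1)$.

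The final step checks Assumption~\ref{asmp: fclt}(a)–(e) for the pair.
\begin{itemize}
\item[(a)] and (d) are inherited from $Y_1$. For (d), note that $Y_2^{*,\mathrm{bm}}$ is adapted to the filtration of $Y_1$, so the filtration generated by $(Y_1,Y_2^{*,\mathrm{bm}})$ coincides with that of $Y_1$.
\item[(b)] Given $\widetilde Z(t)/t\to0$ a.s., the integrand is $o(u^2)$, so $Z_2^{*,\mathrm{bm}}(t)=o(t^2)$ a.s.
\item[(c)] Joint self-similarity follows because $\Psi(x(a\cdot))(t)=\Psi(x)(at)$ for every path $x$, while $\Psi$ is quadratic. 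Hence $\bigl(Y_1(a\cdot),\Psi(Y_1)(a\cdot)\bigr)\overset d=\bigl(a^hY_1,a^{2h}\Psi(Y_1)\bigr)$.
\item[(e)] Continuity of $\Psi(Y_1)$ rules out any jumps, negative ones included. Positivity at every $t>0$ follows from the nondegeneracy condition \eqref{eq: scaling_nondegeneracy}, self-similarity, and the monotonicity of $t\,Y_2^{*,\mathrm{bm}}(t)$, as remarked before Theorem~\ref{thm: replication}.
\end{itemize}
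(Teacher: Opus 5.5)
Your proposal is correct and follows essentially the paper's own route: exact cancellation of the linear centering, the change of variables reducing the scaled $Z_2^{*,\mathrm{bm}}$ to a quadratic integral functional of the normalized path, continuity of that functional via a.e.\ pointwise convergence plus dominated convergence (which is precisely the content of the proof of Whitt's Theorem~11.5.1 that the paper invokes), a separate bound near time zero, and then inheritance of (a)--(e) via the filtration and $o(t)$ arguments, including the nondegeneracy-based positivity that the paper leaves implicit. The one step to tighten is the $t=0$ argument: mere boundedness is not enough, and what you need is $\lim_{\delta\downarrow0}\limsup_{n}\sup_{0\le s\le\delta}|x_n(s)|=0$, which follows from completed-graph ($M_2$) closeness together with right-continuity of $x$ at zero.
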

\begin{thm}[Random Scaling]
\label{thm: random_scaling}
    Let $Z_1(\cdot)$ satisfy the marginal FCLT in~\eqref{eq: fclt} and the corresponding requirements of Assumption~\ref{asmp: fclt}, for some $\mu$, $h$, $\ell$, and $Y_1$.
    Set $Z_1^{\text{rs}}(\cdot)=Z_1(\cdot)$ and define the pointwise random scaling and its time-smoothed version by
    \begin{align}
        Z_2^{\text{rs}}(t)
        &=\frac{1}{t}\int_0^t\left(Z_1(s)-\frac{s}{t}Z_1(t)\right)^2\,\mathrm ds,\\
        Z_2^{*,\text{rs}}(t)
        &=\frac{1}{t}\int_0^t Z_2^{\text{rs}}(u)\,\mathrm du,
        \qquad t>0,
    \end{align}
    with both processes defined to be zero at time zero. Then
    \begin{gather}
        \left(\varepsilon^{h}\ell(\varepsilon^{-1})\left(Z_1^{\text{rs}}\left(\frac{\cdot}{\varepsilon}\right)-\idx\left(\frac{\cdot}{\varepsilon}\right)\mu\right),
        \varepsilon^{h}\ell(\varepsilon^{-1})\sqrt{Z_2^{*,\text{rs}}\left(\frac{\cdot}{\varepsilon}\right)}\right)
        \overset{\mathrm{WM}_2}{\Rightarrow}
        \left(Y_1^{\text{rs}}(\cdot),\sqrt{Y_2^{*,\text{rs}}(\cdot)}\right),\\
        Y_1^{\text{rs}}(\cdot)=Y_1(\cdot),\\
        Y_2^{\text{rs}}(t)
        =\frac{1}{t}\int_0^t\left(Y_1(s)-\frac{s}{t}Y_1(t)\right)^2\,\mathrm ds,\qquad
        Y_2^{*,\text{rs}}(t)
        =\frac{1}{t}\int_0^t Y_2^{\text{rs}}(u)\,\mathrm du.
    \end{gather}
    Both limiting scaling processes are defined to be zero at time zero.
    The process $Y_2^{*,\text{rs}}(\cdot)$ is continuous, and the pair
    $(Z_1^{\text{rs}},Z_2^{*,\text{rs}})$ satisfies
    Assumption~\ref{asmp: fclt}.
\end{thm}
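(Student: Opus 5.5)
The plan is to write the scaling coordinate as a continuous functional of the centered first coordinate, apply the continuous mapping theorem, and then check the conditions of Assumption~\ref{asmp: fclt} one at a time.

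\emph{Reduction.} Set $\tilde Z_1(t):=Z_1(t)-\mu t$. A direct computation gives $Z_1(s)-\frac{s}{t}Z_1(t)=\tilde Z_1(s)-\frac{s}{t}\tilde Z_1(t)$, so $Z_2^{\mathrm{rs}}$ and $Z_2^{*,\mathrm{rs}}$ depend only on $\tilde Z_1$. For $x\in\mathbb D$ define
\begin{align}
    \Psi(x)(t):=\frac{1}{t}\int_0^t\frac{1}{u}\int_0^u\Bigl(x(s)-\frac{s}{u}x(u)\Bigr)^2\,\mathrm ds\,\mathrm du,\qquad t>0,
\end{align}
with $\Psi(x)(0):=0$. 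Set $X_\varepsilon(\cdot):=\varepsilon^h\ell(\varepsilon^{-1})\tilde Z_1(\cdot/\varepsilon)$. A change of variables gives
\begin{align}
    \varepsilon^{2h}\ell(\varepsilon^{-1})^2 Z_2^{*,\mathrm{rs}}(\cdot/\varepsilon)=\Psi(X_\varepsilon)(\cdot),
\end{align}
and the normalized pair is $(X_\varepsilon,\sqrt{\Psi(X_\varepsilon)})$.

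\emph{Continuity of $\Psi$.} The main step is to show that $x\mapsto(x,\sqrt{\Psi(x)})$ is continuous from $(\mathbb D,M_2)$ into $\mathbb D\times C[0,\infty)$ at every $x$ in the support of $Y_1$. Then the continuous mapping theorem applied to the marginal FCLT for $Z_1$ yields the joint convergence. Since the second limit is continuous, convergence in the weak $M_2$ product topology follows.

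For $t$ in a compact interval $[\eta,K]$ this is routine:
\begin{itemize}
    \item $M_2$ convergence gives local uniform boundedness of $x_n$.
    \item It also gives convergence $x_n(s)\to x(s)$ at all continuity points of $x$, which are Lebesgue-a.e.
    \item The value $x_n(u)$ enters only inside $\int_0^u(\cdot)\,\mathrm ds$ and is then averaged over $u$, so a.e. convergence in $u$ suffices.
\end{itemize}
Dominated convergence then gives pointwise convergence of $\Psi(x_n)(t)$. Equicontinuity in $t$ follows from the explicit $1/t$ and $1/u$ structure together with the uniform bound, which upgrades this to uniform convergence on $[\eta,K]$.

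The main obstacle is near $t=0$, because of the $1/u$ factor. Using $(a-b)^2\le 2a^2+2b^2$, the inner integrand is bounded by $2\sup_{[0,u]}x^2+2x(u)^2$. Hence
\begin{align}
    \Psi(x)(t)\le 4\sup_{[0,t]}x^2,
\end{align}
which tends to $0$ as $t\to0$ uniformly over sequences with $\limsup_n\sup_{[0,t]}|x_n|\to0$ as $t\to0$. I would use that $Y_1(0)=0$ and $Y_1$ is right-continuous at $0$. I would then try to obtain this small-time control through $M_2$ convergence near $0$, where the $M_2$ graph distance controls suprema over small intervals. If that fails, I would argue via tightness together with $Z_1(0)=0$.

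\emph{Conditions of Assumption~\ref{asmp: fclt}.}
\begin{itemize}
    \item (a) and (d) are inherited from $Y_1$, provided the filtration generated by $(Y_1,Y_2^{*,\mathrm{rs}})$ coincides with that of $Y_1$. This holds because $Y_2^{*,\mathrm{rs}}$ is adapted to $Y_1$.
    \item (c): since $\Psi(x(a\cdot))=\Psi(x)(a\cdot)$ and $\Psi(\lambda x)=\lambda^2\Psi(x)$, the self-similarity of $Y_1$ transfers jointly with indices $h$ and $2h$.
    \item (b): $Z_1(t)/t\to\mu$ a.s. gives $\sup_{s\le t}|\tilde Z_1(s)|=o(t)$ a.s. The bound above then gives $Z_2^{*,\mathrm{rs}}(t)=o(t^2)$ a.s.
    \item (e): positivity follows from the nondegeneracy condition \eqref{eq: scaling_nondegeneracy} and the remark after it.
\end{itemize}
For continuity of $Y_2^{*,\mathrm{rs}}$, which rules out negative jumps, I would first show that $Y_2^{\mathrm{rs}}$ is locally bounded on $(0,\infty)$ and tends to $0$ at $0$. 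The outer time average of a locally bounded function is continuous on $(0,\infty)$, and the bound $4\sup_{[0,t]}Y_1^2\to0$ handles $t=0$.
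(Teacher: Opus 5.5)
Your proposal is correct and follows essentially the same route as the paper. You rewrite the normalized smoothed scaling as a functional of the rescaled centered path and show that this functional is continuous from $M_2$ (on paths starting at zero) into the local uniform topology. For that you use a.e.\ convergence at continuity points with dominated convergence away from zero, and the bound $4\sup_{[0,t]}x^2$ near zero; you then inherit self-similarity, quasi-left-continuity, and the strong-law condition from $Y_1$ and $Z_1$.

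The differences are cosmetic. The paper gets uniform convergence on $[\delta,T]$ from $L^1$ convergence of the inner random-scaling paths via the bound $\frac{1}{\delta}\int_0^T|Q_n-Q|\,\mathrm du$, where you use pointwise convergence plus equicontinuity. Your small-time control is exactly the paper's observation that completed-graph convergence and right-continuity of $x$ at zero give $\lim_{\delta\downarrow0}\limsup_n\sup_{[0,\delta]}|x_n|=0$, so the tightness fallback is unnecessary.
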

The sectioning construction requires $m$ independent simulation streams, the batch-means construction divides a single trajectory into $m$ sub-trajectories, and random scaling uses the centered bridge of a single trajectory. Fix $0<\nu<\frac{1}{1-h}$. For $c>0$ and $\iota\in\{\text{sec},\text{bm},\text{rs}\}$, define
\begin{gather}
    T_{\iota,c}(\varepsilon)
    :=\inf\left\{n\in\mathbb N:n>\varepsilon^{-\nu},\quad
    c\sqrt{\frac{Z_2^{*,\iota}(n)}{n^2}}<\varepsilon\right\},
    \label{eq: stop2}
\end{gather}
For the corresponding limiting processes, define
\begin{align*}
    \Phi_{\iota,c}(u)
    :=\inf\left\{s>0:
    c\frac{\sqrt{Y_2^{*,\iota}(s)}}{s}<u\right\},
    \qquad u>0.
\end{align*}
Let $\mathbb F^{Y,\iota}$ be the usual augmentation of the natural
filtration generated by $(Y_1^\iota,Y_2^{*,\iota})$.
The same smoothed scaling process is used both to determine termination and to construct the terminal confidence interval.
The following results are the consequences of Section~\ref{sec: frame} for the three scaling constructions.
\begin{thm}
    \label{thm: stop_prop_rep}
    Fix $\iota\in\{\text{sec},\text{bm},\text{rs}\}$. Suppose that the
    hypotheses of the corresponding construction theorem hold and that
    Assumption~\ref{asmp: early_stopping} holds with
    $Z_2=Z_2^{*,\iota}$. Then, for every fixed $c>0$,
    \begin{enumerate}
        \item[(a)] as $\varepsilon\to0^+$, $a_\varepsilon^{-1}T_{\iota,c}\left(\frac{\varepsilon}{\cdot}\right)\overset{M_1}{\Rightarrow}\Phi_{\iota,c}\left(\frac{1}{\cdot}\right)$ locally on $(0,\infty)$;
        \item[(b)] for every $t>0$ and $a>0$, $\Phi_{\iota,c}(at)\overset{d}{=}a^{\frac{1}{h-1}}\Phi_{\iota,c}(t)$, and $\Phi_{\iota,c}(t)<\infty$ almost surely;
        \item[(c)] $\varepsilon^\gamma T_{\iota,c}(\varepsilon)\topb0$ whenever $\gamma>\frac{1}{1-h}$;
        \item[(d)] for every $u>0$ at which $\Phi_{\iota,c}(\cdot)$ is almost surely continuous, $\Phi_{\iota,c}(u)$ is predictable with respect to $\mathbb F^{Y,\iota}$.
    \end{enumerate}
\end{thm}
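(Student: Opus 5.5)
The plan is to reduce Theorem~\ref{thm: stop_prop_rep} to Theorem~\ref{thm: stop_prop} applied to the pair $(Z_1^{\iota},Z_2^{*,\iota})$. Theorem~\ref{thm: stop_prop} needs only Assumptions~\ref{asmp: fclt} and~\ref{asmp: early_stopping} for the pair driving the stopping rule. With $Z_2=Z_2^{*,\iota}$, the stopping time $T_{\iota,c}$ in \eqref{eq: stop2} coincides with $T_c$ in \eqref{eq: stop1}, and $\Phi_{\iota,c}$ coincides with $\Phi_c$. Assumption~\ref{asmp: early_stopping} for $Z_2^{*,\iota}$ is a hypothesis of the statement. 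So the work is to check that the corresponding construction theorem (Theorem~\ref{thm: replication}, \ref{thm: batch_mean} or \ref{thm: random_scaling}) delivers Assumption~\ref{asmp: fclt} in full, with the same $h$ and $\ell$ as the marginal FCLT for $Z_1$. Once this is in place, conclusions (a)–(d) are read off from Theorem~\ref{thm: stop_prop}(a)–(d). The time scale $a_\varepsilon=A^{\leftarrow}(\varepsilon^{-1})$ and the constraint $\nu<\frac{1}{1-h}$ are unchanged, because $h$ and $\ell$ are inherited.

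Each construction theorem already states that the pair satisfies Assumption~\ref{asmp: fclt}. Still, I would recheck the items that Theorem~\ref{thm: stop_prop} actually uses, since they are the delicate ones.

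\emph{Joint self-similarity (c).} For batch means, $Y_1(a\cdot)\overset d= a^hY_1(\cdot)$ as processes. The map $y\mapsto \frac{1}{t}\int_0^t\bigl(y(\tfrac{k}{m}s)-y(\tfrac{k-1}{m}s)-\tfrac1m y(s)\bigr)^2ds$ is measurable on $\mathbb D$, and substituting $s=au$ shows that it sends $y(a\cdot)$ to $a^{2h}$ times the same functional evaluated at $y$ at time $t/a$. This gives joint $(h,2h)$ self-similarity. The random-scaling functional (two nested time averages of a squared bridge) and sectioning (using independence of the copies) work the same way.

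\emph{Positivity and no negative jumps (e).} Continuity of $Y_2^{*,\iota}$ is stated in each construction theorem, so there are no jumps at all. Positivity at every $t>0$ follows from \eqref{eq: scaling_nondegeneracy} together with self-similarity and monotonicity of $tY_2^{*,\iota}(t)$, as noted after \eqref{eq: scaling_nondegeneracy}.

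\emph{Strong-law condition (b).} The condition for $Z_1$ is assumed. For $Z_2^{*,\iota}$, I would use that $Z_1(s)-\mu s=o(s)$ almost surely, so each integrand is $o(s^2)$ and its time average is $o(t^2)$. For batch means and random scaling, the $\mu s$ terms cancel exactly in the centered combinations, since $\frac{k}{m}s-\frac{k-1}{m}s-\frac1m s=0$ and $s-\frac{s}{t}t\cdot$ is linear. For sectioning, each copy satisfies its own strong law.

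\emph{Quasi-left-continuity (d).} This concerns $Y_1$ with respect to $\mathbb F^{Y,\iota}$. Because $Y_2^{*,\iota}$ is a continuous functional of the past of $Y_1$, namely
\begin{align}
Y_2^{*,\iota}(t)\in\sigma\bigl(Y_1(s):s\le t\bigr)
\end{align}
(or of the copies $Y_1^{(i)}$ in the sectioning case), the filtration $\mathbb F^{Y,\iota}$ is the augmented natural filtration of $Y_1$ alone, or of the vector $(Y_1^{(1)},\dots,Y_1^{(m)})$ for sectioning. Hence quasi-left-continuity transfers directly from the hypothesis. For sectioning, a jump of $\bar Y_1$ at a predictable time $\tau$ requires a jump of some $Y_1^{(i)}$ at $\tau$, and this has probability zero by the assumed quasi-left-continuity of the limiting vector.

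The main obstacle is this filtration identification in (d). The step I would treat carefully is that the usual augmentation of the filtration generated by $(Y_1,Y_2^{*,\iota})$ equals that generated by $Y_1$. This holds because $Y_2^{*,\iota}(t)$ is measurable with respect to $\mathcal F^{Y_1}_t$ and $Y_1$ is a coordinate of the pair, so the two generated filtrations coincide before augmentation. With these checks done, conclusions (a)–(d) follow verbatim from Theorem~\ref{thm: stop_prop}.
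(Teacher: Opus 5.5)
Your proposal is correct and follows the paper's proof. The paper's proof is exactly the substitution $(Z_1,Z_2,Y_1,Y_2)=(Z_1^{\iota},Z_2^{*,\iota},Y_1^{\iota},Y_2^{*,\iota})$ into Theorem~\ref{thm: stop_prop}, with the relevant construction theorem supplying Assumption~\ref{asmp: fclt} and Assumption~\ref{asmp: early_stopping} holding by hypothesis, so your rechecks are redundant rather than required. One small slip in those rechecks: for sectioning, the filtration generated by $(\bar Y_1,Y_2^{*,\mathrm{sec}})$ is only contained in that of $(Y_1^{(1)},\ldots,Y_1^{(m)})$, not equal to it. Inclusion is all your jump argument needs, since predictable times for the smaller filtration are predictable for the larger one, and this is how the proof of Theorem~\ref{thm: replication} argues.
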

\begin{thm}
\label{thm: asymp_valid_rep}
    Fix $\iota\in\{\text{sec},\text{bm},\text{rs}\}$ and suppose that the
    assumptions of Theorem~\ref{thm: stop_prop_rep} hold. Fix
    $\delta\in(0,1)$ and suppose that
    $\Phi_{\iota,1}(1)>0$ almost surely. Fix $c>0$ and suppose that
    $\Phi_{\iota,c}(\cdot)$ is almost surely continuous at $1$. Then, as
    $\varepsilon\to0^+$,
    \begin{align}
             \frac{Z_1^{\iota}(T_{\iota,c}(\varepsilon))-T_{\iota,c}(\varepsilon)\mu}{\sqrt{Z_2^{*,\iota}(T_{\iota,c}(\varepsilon))}}\tod
             \frac{Y_1^{\iota}(\Phi_{\iota,c}(1))}{\sqrt{Y_2^{*,\iota}(\Phi_{\iota,c}(1))}}=:W_{\iota,c}.
             \label{eq: limit_dist_rep}
    \end{align}
    The distribution of $W_{\iota,c}$ does not depend on $c$. Let $F_\iota$
    be this common distribution function, set
    $c_{l,\iota}=F_\iota^{\leftarrow}\left(\frac{\delta}{2}\right)$,
    $c_{u,\iota}=F_\iota^{\leftarrow}\left(1-\frac{\delta}{2}\right)$, and
    $c_\iota^*=c_{u,\iota}-c_{l,\iota}$. Suppose that
    $c_{l,\iota}<c_{u,\iota}$, $F_\iota$ is continuous at these quantiles,
    and $\Phi_{\iota,c_\iota^*}(\cdot)$ is almost surely continuous at
    $1$. Then the confidence interval
    \begin{align}
    \label{eq: cover_rep}
        \text{CI}_{\iota}(\varepsilon)
        :=\left[
        \frac{Z^{\iota}_1(T_{\iota,c_\iota^*}(\varepsilon))}{T_{\iota,c_\iota^*}(\varepsilon)}
        -c_{u,\iota}\sqrt{\frac{Z_2^{*,\iota}(T_{\iota,c_\iota^*}(\varepsilon))}{T_{\iota,c_\iota^*}(\varepsilon)^2}},
        \frac{Z_1^{\iota}(T_{\iota,c_\iota^*}(\varepsilon))}{T_{\iota,c_\iota^*}(\varepsilon)}
        -c_{l,\iota}\sqrt{\frac{Z_2^{*,\iota}(T_{\iota,c_\iota^*}(\varepsilon))}{T_{\iota,c_\iota^*}(\varepsilon)^2}}
        \right]
    \end{align}
    satisfies $\PB\bigl(\mu\in\text{CI}_{\iota}(\varepsilon)\bigr)\to1-\delta$.
\end{thm}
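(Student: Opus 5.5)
The plan is to reduce Theorem~\ref{thm: asymp_valid_rep} to Theorem~\ref{thm: asymp_valid}, applied to the pair $(Z_1,Z_2)=(Z_1^{\iota},Z_2^{*,\iota})$. Theorem~\ref{thm: asymp_valid} needs two inputs: Assumption~\ref{asmp: fclt} and Assumption~\ref{asmp: early_stopping} for this pair, together with its continuity and positivity hypotheses on $\Phi$ and $F$.

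The first input is already available. The hypotheses of Theorem~\ref{thm: stop_prop_rep} include the corresponding construction theorem (Theorem~\ref{thm: replication}, \ref{thm: batch_mean}, or \ref{thm: random_scaling}), and each of these concludes that $(Z_1^{\iota},Z_2^{*,\iota})$ satisfies Assumption~\ref{asmp: fclt}. This comes with the same $\mu,h,\ell$ and the limits $(Y_1^\iota,Y_2^{*,\iota})$. Assumption~\ref{asmp: early_stopping} with $Z_2=Z_2^{*,\iota}$ is assumed directly.

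Next I check that the objects match. With $Z_2=Z_2^{*,\iota}$, the stopping time $T_c$ in \eqref{eq: stop1} is literally $T_{\iota,c}$ in \eqref{eq: stop2}, with the same $\nu$. Likewise $\Phi_c$ becomes $\Phi_{\iota,c}$, and $\mathbb F^Y$ becomes $\mathbb F^{Y,\iota}$. The hypotheses of Theorem~\ref{thm: asymp_valid_rep} are then exactly those of Theorem~\ref{thm: asymp_valid} in this notation:
\begin{itemize}
\item $\Phi_{\iota,1}(1)>0$ almost surely;
\item $\Phi_{\iota,c}$ is almost surely continuous at $1$;
\item $c_{l,\iota}<c_{u,\iota}$;
\item $F_\iota$ is continuous at $c_{l,\iota}$ and $c_{u,\iota}$;
\item $\Phi_{\iota,c_\iota^*}$ is almost surely continuous at $1$.
\end{itemize}
Applying Theorem~\ref{thm: asymp_valid} then gives three things. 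It gives the convergence \eqref{eq: limit_dist_rep}. It shows that the law of $W_{\iota,c}$ does not depend on $c$, which comes from the self-similarity in Assumption~\ref{asmp: fclt}(c) and Theorem~\ref{thm: stop_prop}(b). It also gives the coverage statement, since $\text{CI}_\iota(\varepsilon)$ in \eqref{eq: cover_rep} coincides with $\text{CI}(\varepsilon)$ in \eqref{eq: cover}.

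The only step needing care is confirming that the smoothed pair truly meets every clause of Assumption~\ref{asmp: fclt}, since Theorem~\ref{thm: asymp_valid}'s proof relies on each of them.
\begin{itemize}
\item Clause (d), quasi-left-continuity, must hold with respect to the filtration generated by $(Y_1^\iota,Y_2^{*,\iota})$. For bm and rs, $Y_2^{*,\iota}$ is a functional of the path of $Y_1$, so this filtration agrees with that of $Y_1$ (or of the vector of copies, for sectioning).
\item Clause (e) follows from \eqref{eq: scaling_nondegeneracy} together with the continuity of $Y_2^{*,\iota}$, which in particular rules out negative jumps.
\end{itemize}
Since the construction theorems assert Assumption~\ref{asmp: fclt} outright, I would simply cite them. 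The proof is then a direct application, with no new limit theory needed.
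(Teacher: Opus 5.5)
Your proposal is correct and is essentially the paper's own argument: substitute $(Z_1,Z_2,Y_1,Y_2)=(Z_1^\iota,Z_2^{*,\iota},Y_1^\iota,Y_2^{*,\iota})$, take Assumption~\ref{asmp: fclt} from the corresponding construction theorem and Assumption~\ref{asmp: early_stopping} from the hypotheses, and read off the random-time limit, the $c$-invariance, and the coverage from Theorem~\ref{thm: asymp_valid}. One small imprecision: for sectioning, the filtration of $(Y_1^{\mathrm{sec}},Y_2^{*,\mathrm{sec}})$ is only contained in the filtration of the vector of copies, not equal to it, but this inclusion is the right direction for inheriting quasi-left-continuity, and it is exactly what the paper uses.
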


%% file: tex/example.tex
In this section, we illustrate the applications of the sequential stopping
procedure introduced in Section~\ref{sec: frame} using a heavy-tailed
moving-average process, stochastic approximation, and an M/G/1 queue
with heavy-tailed service times. For each case, we verify the process-level
convergence in Assumption~\ref{asmp: fclt}, together with
Assumption~\ref{asmp: early_stopping} for the corresponding scaling
processes.

\subsection{Moving Average Process}
We consider the time series model where the time coefficient is fixed to be a constant:
\begin{align}
\label{md: ar1}
    &\text{MA(p): }\hspace{4pt}X_t=\sum_{i=0}^{p}\varphi^i\xi_{t-i}.
\end{align}
where $\{\xi_{i}\}_{i\in\ZB}$ is an i.i.d. sequence with finite mean
$\EB[\xi_0]=\mu_\xi$ and satisfies Assumption~\ref{asmp: domain}. The target
is the marginal mean
$\mu:=\EB[X_t]=\mu_\xi\sum_{i=0}^p\varphi^i$, where the sum is interpreted
as an infinite series when $p=\infty$.
\begin{thm}
\label{thm: time}
    Suppose that $0<\varphi<1$, $p\in\mathbb N_0\cup\{\infty\}$, and
    the distribution of $\xi_t$ satisfies Assumption~\ref{asmp: domain}
    with $\alpha\in(1,2)$. Let $(Z_1,Z_2)$ be as
    in~\eqref{eq: ss2}, where $X_i$ follows~\eqref{md: ar1}. Then
    $(Z_1,Z_2)$ satisfies Assumption~\ref{asmp: fclt} with
    $h=\frac{1}{\alpha}$ and a suitable slowly varying function $\ell$.
    Moreover, Assumption~\ref{asmp: early_stopping} holds for every
    $0<\nu<\frac{\alpha}{\alpha-1}$. When $p=\infty$, the moving-average
    representation is that of an autoregressive process of order one.
\end{thm}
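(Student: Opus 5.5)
The plan is to reduce Theorem~\ref{thm: time} to the i.i.d.\ result, Theorem~\ref{thm: iid_infinite}. We do this by comparing the moving-average sums with the partial sums of the innovations, both for the partial-sum coordinate and for the quadratic coordinate. Write $\psi:=\sum_{i=0}^p\varphi^i$, which is finite since $0<\varphi<1$, and $\tilde\xi_j:=\xi_j-\mu_\xi$. Then
\begin{align}
    \sum_{t=1}^{\lfloor s\rfloor}(X_t-\mu)=\psi\sum_{j=1}^{\lfloor s\rfloor}\tilde\xi_j+R_1(s).
\end{align}
The remainder $R_1$ consists of boundary terms. It equals $\sum_{i}\varphi^i$ times a sum over the at most $i$ innovations near time $0$ and near time $\lfloor s\rfloor$. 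For the quadratic part, expand
\begin{align}
    (X_t-\mu)^2=\sum_i\varphi^{2i}\tilde\xi_{t-i}^2+\sum_{i\ne k}\varphi^{i+k}\tilde\xi_{t-i}\tilde\xi_{t-k}.
\end{align}
This gives $\sum_{t\le s}(X_t-\mu)^2=\psi_2\sum_{j\le s}\tilde\xi_j^2+R_2(s)+C(s)$, where $\psi_2=\sum_i\varphi^{2i}$, $R_2$ is again a boundary term, and $C$ collects the cross products $\tilde\xi_j\tilde\xi_{j'}$ with $j\ne j'$.

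\textbf{Step 1 (joint FCLT).} Theorem~\ref{thm: iid_infinite} applied to $\xi$ gives the joint limit $(L_1,L_2)$ for the innovation partial sums and squares. The $M_1$ topology is not needed here since we have the innovation $J_1$ convergence of \cite{logan1973limit} type. We then show three negligibility facts under the normalization $\varepsilon^{h}\ell(\varepsilon^{-1})$ for the first coordinate and its square for the second, locally uniformly in time.
\begin{enumerate}
\item[(i)] $\sup_{s\le K/\varepsilon}|R_1(s)|$ is negligible. The boundary terms are $O_P(1)$ at $0$, and at the moving endpoint they are bounded by $\max_{j\le K/\varepsilon}|\tilde\xi_j|$ times geometric weights. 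Their contribution vanishes in the sup only in the $M_1$/$M_2$ sense, because a big jump of $\xi$ is spread over a few consecutive $X_t$. This is exactly why the weak $M_2$ topology appears in~\eqref{eq: fclt}. Since $\varphi>0$, all weights are positive, so the partial sums of $X$ are monotone across each jump's geometric tail, giving $M_1$ (hence $M_2$) convergence. I would invoke the Avram--Taqqu type argument: the sum process equals $\psi\times$ the innovation sum process plus a perturbation that is a monotone interpolation near large jumps.
\item[(ii)] $R_2$ is handled the same way.
\item[(iii)] The cross term $C$ is $o_P$ of $s^{2/\alpha}\ell^{-2}$. Write $C=2\sum_{d\ge1}\sum_i\varphi^{2i+d}\sum_t\tilde\xi_{t}\tilde\xi_{t-d}$. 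Products $\tilde\xi_t\tilde\xi_{t-d}$ of independent regularly varying variables with index $\alpha$ have tails of order $\alpha$ up to slowly varying factors, so their sums are $O_P(s^{1/\alpha+\eta})$. This is negligible against $s^{2/\alpha}$ since $1/\alpha<2/\alpha$. The geometric weights make the series over $d$ summable, and a maximal inequality (for instance, Doob for the martingale $\sum_t\tilde\xi_t\sum_d\varphi^d\tilde\xi_{t-d}$ via truncation) gives uniformity.
\end{enumerate}
Finally, centering by $Z_1(t)/\lfloor t\rfloor$ instead of $\mu$ only subtracts $(Z_1-\mu t)^2/t$, which is $O_P(t^{2/\alpha-1})$ and hence negligible. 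The limit is $(\psi L_1,\psi_2L_2)$ after absorbing constants; the slowly varying function can be adjusted so that both coordinates share one normalization. The ratio $\psi^2/\psi_2$ is harmless because $\sqrt{\psi_2L_2}$ is still a constant multiple of a stable subordinator's square root.

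\textbf{Step 2 (conditions (a)--(e)).} Self-similarity, quasi-left-continuity (Lévy processes have no fixed or predictable jumps), positivity, and the absence of negative jumps all hold for $(\psi L_1,\psi_2L_2)$, exactly as in Theorem~\ref{thm: iid_infinite}. Condition~(b) follows from the ergodic theorem for the stationary ergodic sequence $X_t$. For $Z_2/t^2\to0$, use $\EB|X|^{\beta}<\infty$ for $\beta<\alpha$ and the Marcinkiewicz--Zygmund SLLN for $X_t^2$ with exponent $\beta/2<1$, applied through the innovation expansion. \textbf{Step 3 (early stopping).} Bound $Z_2(n)$ from below: $Z_2(n)\ge \psi_2\sum_{j\le n}\tilde\xi_j^2-|C(n)|-|R_2(n)|-(\text{centering})$. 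Then reuse the i.i.d.\ small-time estimate from the proof of Theorem~\ref{thm: iid_infinite} for $\sum\tilde\xi_j^2$, whose inf over $[\varepsilon^{-\nu},\eta a_\varepsilon]$ is controlled by the deterministic lower growth of an $\alpha/2$-stable sum. The perturbations are shown to be uniformly $o_P(n^{2/\alpha}\ell(n)^{-2})$ for $n\ge\varepsilon^{-\nu}$ via a maximal inequality over dyadic blocks. The case $p=\infty$ is the AR(1) representation $X_t=\varphi X_{t-1}+\xi_t$, which is immediate.

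The main obstacle is Step 1(i)/(iii) made uniform in time. The cross-term bound must hold uniformly, and in Step 3 for all $n$ down to $\varepsilon^{-\nu}$, not just on compacts. The first thing I would try is truncation at level $n^{1/\alpha}$ combined with a blockwise Borel--Cantelli/maximal argument.
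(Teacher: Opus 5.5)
Your plan is viable and differs from the paper's in both halves, but Step 3 contains a false claim that needs repair.

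\textbf{Comparison with the paper.} For the functional limit, the paper does not decompose by hand. It invokes Theorem~2 of Krizmani\'c (2023) for linear processes, whose coefficient conditions the geometric weights satisfy, and then reuses the sample-centering argument from Theorem~\ref{thm: iid_infinite}. Your decomposition is a longer but self-contained substitute. It splits the sums into $\psi\sum_j\tilde\xi_j$ and $\psi_2\sum_j\tilde\xi_j^2$, plus boundary terms negligible only in $M_1$ (monotone because $\varphi>0$, as in Avram--Taqqu), plus a cross term negligible in sup norm. The cross-term bound comes from von Bahr--Esseen and Doob's $L^\gamma$ maximal inequality, $1<\gamma<\alpha$, applied to the martingale transforms $\sum_u\tilde\xi_uV_{u-1}$, where $V_{u-1}$ is a geometric combination of past innovations. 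With the converging-together theorem in the product topology, this gives the required $\mathrm{WM}_2$ limit $(\psi L_1,\psi_2L_2)$.

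For early stopping, the paper never compares $Z_2$ with the innovations. It uses $Z_2(n)\ge\Delta_n^2/2$, $\Delta_n\ge\max_{2\le t\le n}|X_t-X_{t-1}|$, and the identity $X_t-X_{t-1}=\xi_t-(1-\varphi)X_{t-1}-\varphi^{p+1}\xi_{t-p-1}$, whose non-innovation part is measurable with respect to the past. Successive conditioning gives $\PB(\Delta_n\le x)\le\exp\bigl(-\tfrac{n-1}{2}\PB(|\xi_1-\xi_2|>2x)\bigr)$, and the i.i.d.\ dyadic argument then runs verbatim. There are no cross terms, centering terms, or maximal inequalities to control; your lower-bound route has to pay for exactly those.

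\textbf{The flaw in Step 3.} You assert that all perturbations, including $|R_2(n)|$, are uniformly $o_P(n^{2/\alpha}\ell(n)^{-2})$ for $n\ge\varepsilon^{-\nu}$. For $R_2$ this fails for the reason you yourself gave in Step 1(i). For $p\ge1$, the moving-endpoint part of $R_2(n)$ contains $-\tilde\xi_n^2\sum_{1\le i\le p}\varphi^{2i}$. A large innovation at time $n$ therefore makes $|R_2(n)|$ comparable to $\tilde\xi_n^2$, and $\max_{r\le n<2r}\tilde\xi_n^2/b(r)^2$ has a nondegenerate limit.

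The repair is to drop the comparison with $\psi_2\sum_j\tilde\xi_j^2$. Every diagonal term $\varphi^{2i}\tilde\xi_{t-i}^2$ is nonnegative, so keeping only $i=0$ gives
$Z_2(n)\ge\sum_{t=1}^n\tilde\xi_t^2-|C(n)|-(Z_1(n)-n\mu)^2/n$.

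Second, for the remaining perturbations a uniform $o_P(b(n)^2)$ statement is too weak. The window $[\varepsilon^{-\nu},\eta a_\varepsilon]$ contains of order $\log(1/\varepsilon)$ dyadic blocks. An extra level $\delta b(r)^2$ with $\delta$ fixed leaves a per-block small-ball bound of order $\exp(-c\delta^{-\alpha/2})$, and the sum of these over the blocks does not vanish. You need polynomial rates with summable failure probabilities. For example, $\PB\bigl(\max_{n\le N}|C(n)|>N^{\theta}\bigr)\lesssim N^{-c}$ for some $\theta<2/\alpha$ and $c>0$; the $L^\gamma$ maximal inequality gives this with $\theta=1/\gamma+\delta$. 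The centering term needs the analogous bound. The extra level then costs at most $\exp\bigl(-r^{1-\alpha\theta/2+o(1)}\bigr)$ on the block starting at $r$, which is summable. The level $K\varepsilon r$ is handled as in the i.i.d.\ case via $\PB(\max_{t\le r}|\tilde\xi_t|\le x)\le\exp(-r\PB(|\tilde\xi_1|>x))$. With these two corrections your Step 3 goes through.
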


\subsection{Stochastic Approximation}
We consider the one-dimensional stochastic approximation
\begin{align}
    \theta_{n+1}
    =\theta_n-\eta_n g(\theta_n;\xi_{n+1}),
    \qquad
    g(\theta;\xi)=H(\theta-\theta^*)+\xi,
    \label{eq: sgd_linear}
\end{align}
where $\theta^*\in\RB$, $H>0$, and
$\eta_n=c_\eta(n+1)^{-\varrho}$ with $c_\eta>0$,
$\frac{1}{\alpha}<\varrho<1$, and $0<c_\eta H<1$. The initial value $\theta_0$ is deterministic. The variables
$\{\xi_n\}_{n\ge1}$ are i.i.d., have mean zero, and their common
distribution satisfies Assumption~\ref{asmp: domain} for some
$\alpha\in(1,2)$. Define
\begin{align}
    \label{eq: pa}
    &Z_1(t)=\sum_{i=1}^{\lfloor t\rfloor}\theta_i,\\
    \label{eq: pa_var}
    &Z_2(t)=\sum_{i=1}^{\lfloor t\rfloor}
    g(\theta_{i-1};\xi_i)^2.
\end{align}
\begin{thm}
\label{thm: pa_limit}
    Let $(L_1,L_2)$ be the joint L\'evy process characterized by the
    following convergence as $\varepsilon\to0^+$:
    \begin{align}
        \left(
        \varepsilon^{\frac{1}{\alpha}}\ell(\varepsilon^{-1})
        \sum_{i=1}^{\lfloor\frac{\cdot}{\varepsilon}\rfloor}\xi_i,
        \varepsilon^{\frac{2}{\alpha}}\ell(\varepsilon^{-1})^2
        \sum_{i=1}^{\lfloor\frac{\cdot}{\varepsilon}\rfloor}\xi_i^2
        \right)
        \overset{J_1}{\Rightarrow}(L_1,L_2),
        \label{eq: pa_noise_joint_fclt}
    \end{align}
    where $\ell$ is a slowly varying function, $L_1$ is an
    $\alpha$-stable L\'evy process, and $L_2$ is an
    $\frac{\alpha}{2}$-stable subordinator. Then, as
    $\varepsilon\to0^+$,
    \begin{align}
        \left(
        \varepsilon^{\frac{1}{\alpha}}\ell(\varepsilon^{-1})\left(
        Z_1\left(\frac{\cdot}{\varepsilon}\right)
        -\theta^*\left\lfloor\frac{\cdot}{\varepsilon}\right\rfloor\right),
        \varepsilon^{\frac{2}{\alpha}}\ell(\varepsilon^{-1})^2
        Z_2\left(\frac{\cdot}{\varepsilon}\right)
        \right)
        \overset{\mathrm{WM}_1}{\Rightarrow}
        \left(-H^{-1}L_1,L_2\right).
        \label{eq: pa_joint_fclt}
    \end{align}
    Consequently, $(Z_1,Z_2)$ satisfies
    Assumption~\ref{asmp: fclt} with $\mu=\theta^*$,
    $h=\frac{1}{\alpha}$, the above $\ell$, $Y_1=-H^{-1}L_1$, and $Y_2=L_2$.
    Moreover, Assumption~\ref{asmp: early_stopping} holds for
    every $0<\nu<\frac{\alpha}{\alpha-1}$.
\end{thm}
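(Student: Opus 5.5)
The plan is to reduce the stochastic-approximation recursion to a linear functional of the noise partial sums, plus remainders that are negligible at the scale $\varepsilon^{1/\alpha}\ell(\varepsilon^{-1})$. Set $\Delta_n=\theta_n-\theta^*$, so that $\Delta_{n+1}=(1-\eta_nH)\Delta_n-\eta_n\xi_{n+1}$. Rearranging gives
\begin{align}
H\Delta_n=\frac{\Delta_n-\Delta_{n+1}}{\eta_n}-\xi_{n+1}.
\end{align}
Summing over $n=1,\dots,N$ and applying summation by parts to the first term yields
\begin{align}
H\sum_{i=1}^N\Delta_i=-\sum_{i=2}^{N+1}\xi_i+\frac{\Delta_1}{\eta_1}-\frac{\Delta_{N+1}}{\eta_N}+\sum_{i=2}^{N}\Delta_i\Bigl(\frac{1}{\eta_i}-\frac{1}{\eta_{i-1}}\Bigr).
\end{align}
This is the Polyak--Juditsky decomposition.

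The main step, and the main obstacle, is to show that the remainders vanish uniformly on compacts after scaling:
\begin{align}
\varepsilon^{1/\alpha}\ell(\varepsilon^{-1})\sup_{N\le K/\varepsilon}\Bigl|\frac{\Delta_{N+1}}{\eta_N}\Bigr|\topb0,\qquad
\varepsilon^{1/\alpha}\ell(\varepsilon^{-1})\sum_{i\le K/\varepsilon}|\Delta_i|\,i^{\varrho-1}\topb0.
\end{align}
Here $1/\eta_i-1/\eta_{i-1}=O(i^{\varrho-1})$. To get these bounds I would write $\Delta_n$ as a weighted sum of the noise, $\Delta_n=\prod(1-\eta_kH)\Delta_0-\sum_k\eta_k\prod_{j>k}(1-\eta_jH)\xi_{k+1}$. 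Since $\xi$ lies in the $\alpha$-stable domain of attraction, the moment bound $\EB|\xi|^{p}<\infty$ holds for every $p<\alpha$. Using the von Bahr--Esseen inequality for independent mean-zero variables with $p\in(1,\alpha)$, I expect $\EB|\Delta_n|^p\lesssim \eta_n^{p-1}$ up to constants, i.e.\ $|\Delta_n|=O_p(n^{-\varrho(1-1/p)})$.

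For the second remainder, $\sum_{i\le N}|\Delta_i|i^{\varrho-1}$ is then $O_p(N^{\varrho-\varrho(1-1/p)})=O_p(N^{\varrho/p})$. Choosing $p$ close to $\alpha$ makes this $o(N^{1/\alpha}/\ell)$, because $\varrho<1$. The first remainder requires a maximal inequality: I would bound $\sup_{N}|\Delta_{N+1}|N^{\varrho}$ by a dyadic-block union bound with $p$-th moments, which gives order $N^{\varrho/p}$ up to logarithms, again $o(N^{1/\alpha})$. This is where the condition $\varrho>1/\alpha$ and the choice of $p$ close to $\alpha$ must be tuned carefully. Slowly varying factors are absorbed by Potter bounds. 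A related estimate, $\sum_{i\le N}\Delta_i^2=o_p(N^{2/\alpha})$, follows similarly and is needed below.

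With the remainders controlled, the first coordinate equals $-H^{-1}$ times the scaled noise partial sum (the index shift by one being negligible), plus a term tending to zero uniformly on compacts. For the second coordinate, write $g(\theta_{i-1};\xi_i)^2=\xi_i^2+2H\Delta_{i-1}\xi_i+H^2\Delta_{i-1}^2$. The $\Delta^2$ term is $o_p(N^{2/\alpha})$ by the estimate above. The cross term is bounded via Cauchy--Schwarz by $(\sum\Delta^2\sum\xi^2)^{1/2}=o_p(N^{2/\alpha})$, since $\sum\xi^2=O_p(N^{2/\alpha})$. Hence the second coordinate equals the scaled $\sum\xi_i^2$ plus a uniformly negligible term. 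Applying \eqref{eq: pa_noise_joint_fclt} with a converging-together argument (uniform perturbations are negligible in $J_1$, hence in $\mathrm{WM}_1$) gives \eqref{eq: pa_joint_fclt}; the continuous map $x\mapsto\sqrt{x}$ then yields the form \eqref{eq: fclt}. The statement asks for $\mathrm{WM}_1$, which is stronger than $\mathrm{WM}_2$.

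The remaining conditions of Assumption~\ref{asmp: fclt} I would check directly:
\begin{enumerate}
\item[(a)] $h=1/\alpha\in(0,1)$.
\item[(b)] Since $\Delta_n\to0$ a.s., Cesàro averaging gives $(Z_1(t)-\theta^*t)/t\to0$ a.s. The bound $Z_2(t)/t^2\to0$ a.s.\ follows from the Marcinkiewicz--Zygmund SLLN for $\xi^2$ with exponent $2>\alpha/2$, together with $\Delta_n\to0$.
\item[(c)] Joint self-similarity holds because $(L_1,L_2)$ is the limit of a Lévy scaling, hence is a jointly strictly stable Lévy process with indices $1/\alpha$ and $2/\alpha$. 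Centering is harmless since $\EB\xi=0$ and $\alpha>1$.
\item[(d)] Lévy processes are quasi-left-continuous in their augmented filtration.
\item[(e)] A nondegenerate $\alpha/2$-stable subordinator with no drift is strictly increasing with $L_2(t)>0$ for every $t>0$, and it has no negative jumps.
\end{enumerate}

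For Assumption~\ref{asmp: early_stopping}, note that $Z_2(n)\ge\frac12\sum\xi_i^2-H^2\sum\Delta_{i-1}^2$. It therefore suffices to show that $\inf_{\varepsilon^{-\nu}\le n\le\eta a_\varepsilon}\sqrt{\sum_{i\le n}\xi_i^2}/n>K\varepsilon$ with high probability. I expect this to follow by the same argument used in Theorem~\ref{thm: iid_infinite}: a lower bound $\sum_{i\le n}\xi_i^2\ge c\,n^{2/\alpha}/\ell(n)^2$ holds uniformly over dyadic blocks with high probability (from the $\xi^2$ analogue of Theorem~\ref{thm: iid_infinite}). Then $\sqrt{\cdot}/n\gtrsim n^{1/\alpha-1}/\ell(n)$, which is decreasing-regular, and at $n=\eta a_\varepsilon$ this quantity is about $\eta^{1/\alpha-1}\varepsilon\gg K\varepsilon$ as $\eta\to0$. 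The $\Delta^2$ correction is negligible by the remainder bounds.
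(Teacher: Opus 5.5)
\textbf{There is a genuine gap: the uniform negligibility of the boundary term fails.}
Your main step claims $\varepsilon^{1/\alpha}\ell(\varepsilon^{-1})\sup_{N\le K/\varepsilon}|\Delta_{N+1}/\eta_N|\topb0$, and this is false. The recursion gives $\Delta_{N+1}/\eta_N=(1-H\eta_N)\Delta_N/\eta_N-\xi_{N+1}$. So at the index $j=N+1$ of a noise value of order $\varepsilon^{-1/\alpha}/\ell(\varepsilon^{-1})$, the term equals $-\xi_j$ up to a lower-order correction. The scaled supremum therefore has a nondegenerate limit, essentially the largest jump of $|L_1|$ on $[0,K]$.

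No argument of this form can work, for a structural reason. Since $\theta_i\to\theta^*$ a.s., the increments $\theta_i-\theta^*$ of the centred $Z_1$ are a.s. bounded. After scaling by $\varepsilon^{1/\alpha}\ell(\varepsilon^{-1})$, the first coordinate therefore has jumps tending to zero, whereas $-H^{-1}L_1$ has jumps. If the first coordinate were uniformly $o_\PB(1)$-close to $-H^{-1}\varepsilon^{1/\alpha}\ell(\varepsilon^{-1})\sum_{i\le\cdot/\varepsilon}\xi_i$, it would converge to $-H^{-1}L_1$ in $J_1$, which is impossible. This is exactly why the theorem is stated in $\mathrm{WM}_1$ rather than $J_1$.

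Your computation breaks at the maximal bound of order $N^{\varrho/p}$. That is the size of a single $|\Delta_N|N^{\varrho}$, but the maximum over $N$ picks up the largest noise value, so it is genuinely of order $N^{1/\alpha}$ up to slowly varying factors. The rest is handled correctly: the remainder $\sum_{i\le N}|\Delta_i|i^{\varrho-1}$, the estimate $\sum_{i\le N}\Delta_i^2=o_\PB(N^{2/\alpha})$, the second coordinate, and conditions (a)--(e).

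\textbf{The missing idea is an $M_1$ comparison, not a uniform one.} After a large $\xi_j$, the boundary term is approximately
$-\Delta_{N+1}/(H\eta_N)\approx H^{-1}\frac{\eta_{j-1}}{\eta_N}\prod_{l=j}^{N}(1-H\eta_l)\,\xi_j$.
It starts at $\xi_j/H$ and decays over a window of length $\asymp n^{\varrho}\log n=o(n)$. So the first coordinate crosses the jump of $-H^{-1}\sum\xi_i$ along a nearly monotone ramp, with overshoot of relative order $j^{\varrho-1}$. To conclude you need the following steps, which are what the paper does via the exact weights $q_{k-1,r}$, Lemma~\ref{lem: q_est}, its good event and the two-phase parametrization:
\begin{itemize}
\item truncate the noise at a small multiple of $\varepsilon^{-1/\alpha}/\ell(\varepsilon^{-1})$;
\item show the small-noise part of the boundary term is uniformly negligible, via Abel summation and a maximal inequality;
\item work on the event where there are finitely many large jumps, separated by more than twice the window length and away from the right endpoint;
\item build matched parametric representations of the two completed graphs on each window.
\end{itemize}

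\textbf{A secondary issue is in your early-stopping sketch.} A lower bound $\sum_{i\le n}\xi_i^2\ge c\,n^{2/\alpha}/\ell(n)^2$ with fixed $c$ cannot hold with high probability simultaneously over the growing number of dyadic blocks. The argument must instead use the $n$-dependent level $K\varepsilon n\,\ell(n)/n^{1/\alpha}=K\varepsilon A(n)$, which decreases geometrically along the blocks below $\eta a_\varepsilon$. Combine it with the exponential small-ball bound summed over blocks, as in Theorem~\ref{thm: iid_infinite}. The paper absorbs the $\theta$-dependent shift by conditioning on the past rather than by subtracting $\sum\Delta_i^2$.
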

\begin{rem}
    Multidimensional and nonlinear extensions follow the same linearization
    principle, but require suitable approximation and multivariate distribution conditions. 
\end{rem}

\subsection{M/G/1 Queueing System}
\label{sec: queue}
In some stochastic models, the empirical standard deviation need not have the same stochastic order as the estimation error and therefore may fail to eliminate the nuisance parameters. We illustrate this issue with an M/G/1 queue model:
\begin{itemize}
	\item The inter-arrival times $\{A_n\}_{n\ge1}$ are i.i.d.
	exponential with $\EB[A_n]=\lambda_A^{-1}$.
	\item Service time $\{U_n\}_{n\ge0}$ is i.i.d. and $\alpha$-regular varying with mean $\EB[U_n]=\mu_U$ with $\alpha\in(2,4)$.
	\item Single server.
	\item Light traffic: $\mu_U<\lambda_A^{-1}$.
\end{itemize}
Assume that the inter-arrival and service-time sequences are mutually
independent. We denote the delay sequence by
$W_{n+1}=(W_n+U_n-A_{n+1})_+$, with $W_0=0$, and set
$Z_1(t)=\sum_{i=1}^{\lfloor t\rfloor}W_i$ for $t\ge0$.
For simplification, we consider the first busy cycle, from
$\beta_0=0$ to
$\beta_1=\inf\{k\ge1:W_k=0\}$, during which $W_k>0$ for
$1\le k<\beta_1$. A large service time of order $x$ produces a workload
excursion whose height is also of order $x$. Since the workload drains at
mean rate $\lambda_A^{-1}-\mu_U>0$, the duration of this excursion is of
order $x$, and its accumulated waiting time is therefore of order $x^2$.
More generally, if
$Z_p(t):=\sum_{i=1}^{\lfloor t\rfloor}W_i^p$, then
$Z_p(\beta_1)-Z_p(\beta_0)$ is of order $x^{p+1}$. Equivalently, over a
horizon of order $\varepsilon^{-1}$, the largest service time is of order
$\varepsilon^{-\frac{1}{\alpha}}$, up to slowly varying factors, so a
single extreme busy cycle contributes order
$\varepsilon^{-\frac{p+1}{\alpha}}$ to $Z_p$; for $p=1$, this is order
$\varepsilon^{-\frac{2}{\alpha}}$. This one-big-jump mechanism is
illustrated in Figure~\ref{fig: queue}. Consequently, the centered
cumulative waiting-time process and the empirical sum of squares generally
have incompatible stochastic scales, so the sum-of-squares construction in
Section~\ref{sec: ss} does not provide a matching self-normalizer. We
therefore establish a centered functional limit for $Z_1$ and use the
constructions in Section~\ref{sec: sec_bm}.
\begin{figure}[htbp!]
    \centering
    \includegraphics[width=0.7\linewidth]{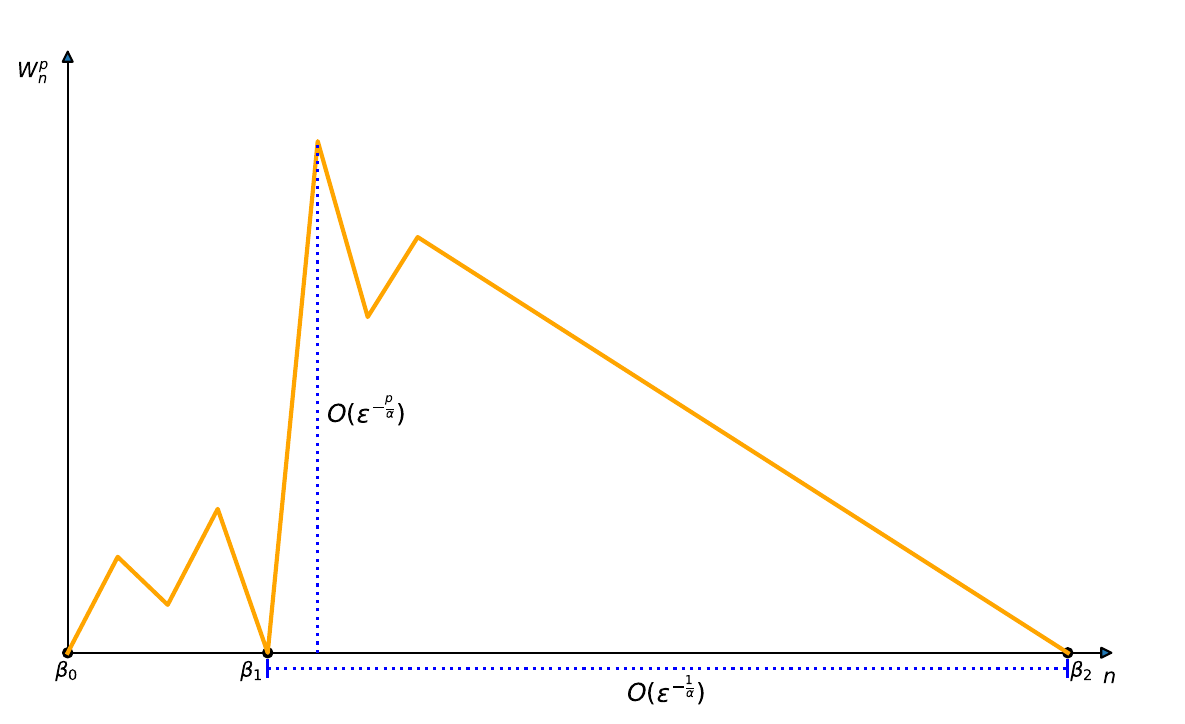}
    \caption{Illustration of $W_n^p$ in M/G/1 queue.}
    \label{fig: queue}
\end{figure}
The following result identifies the tail of the total delay time during one busy cycle with general power $p>0$.
For $p=1$, the corresponding excursion-area asymptotic was established for heavy-tailed random walks by \citet{denisov2021tail}; see also \citet{borovkov2003integral} for closely related workload-integral asymptotics in single-server queues.
\begin{thm}
\label{thm: queue_reward_tail}
    For the M/G/1 light-traffic queue described above, let
    $X_i=U_{i-1}-A_i$ and $\beta_1=\inf\{n\ge1:W_n=0\}$. Put
    $a=-\EB[X_1]=\lambda_A^{-1}-\mu_U>0$. Then, for every $p>0$,
    \begin{align}
        \PB\left(\sum_{i=1}^{\beta_1}W_i^p>x\right)
        \sim
        \EB[\beta_1]\,
        \PB\left(X_1>(a(p+1)x)^{\frac{1}{p+1}}\right).
        \label{eq: queue_reward_tail}
    \end{align}
    Consequently, $\sum_{i=1}^{\beta_1}W_i^p$ has a regularly
    varying tail with index $-\frac{\alpha}{p+1}$. In particular,
    $\sum_{i=1}^{\beta_1}W_i$ has a regularly varying tail with
    index $-\frac{\alpha}{2}$.
\end{thm}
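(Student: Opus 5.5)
The plan is to reduce the busy-cycle reward to a functional of the embedded random walk $S_n=\sum_{i=1}^nX_i$, $X_i=U_{i-1}-A_i$, and then apply a one-big-jump argument. During the first busy cycle we have $W_k=S_k$ for $1\le k<\beta_1$, so $\beta_1=\inf\{n\ge1:S_n\le0\}$ and $\sum_{i=1}^{\beta_1}W_i^p=\sum_{k<\beta_1}S_k^p$. Since $A_i$ is exponential, $X_1$ has a regularly varying right tail with index $-\alpha$, inherited from $U$, and an exponentially light left tail. Moreover, $\beta_1$ is a stopping time with exponentially decaying tail in the light-traffic regime.

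For the lower bound, I would use the following picture. Suppose that at some step $k<\beta_1$ the walk has remained bounded, and then $X_{k+1}=y$ is large. Afterward, by the functional law of large numbers, the walk drains along $y-aj$ for about $y/a$ steps. The resulting reward is therefore
\begin{align}
\sum_j (y-aj)^p\approx \frac{y^{p+1}}{a(p+1)}.
\end{align}
Hence the reward exceeds $x$ essentially when $y>(a(p+1)x)^{1/(p+1)}$. Summing over $k$ gives the factor $\sum_k\PB(\beta_1>k)=\EB[\beta_1]$. The drained part is controlled by the uniform LLN over a horizon of order $y$: for any $\eta>0$, with probability tending to one, $\sup_{j\le (1+\eta)y/a}|S_{k+1+j}-S_{k+1}+aj|\le\eta y$. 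The fluctuations after the jump then change the reward only by a factor $1\pm O(\eta)$, and regular variation absorbs this as $\eta\downarrow0$.

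The upper bound is the main obstacle, because one must show that the event $\{\sum_{k<\beta_1}S_k^p>x\}$ is asymptotically dominated by a single big jump. The first step is to truncate at a level $\delta x^{1/(p+1)}$. I would split on whether some $X_i$ with $i\le\beta_1$ exceeds $\delta x^{1/(p+1)}$.
\begin{itemize}
\item If no such jump occurs, the walk with truncated increments still has negative drift. A Fuk--Nagaev or exponential-martingale bound, in the style of Denisov et al.\ \cite{denisov2021tail}, shows that both the cycle length and the maximum of the excursion have tails that are $o(\PB(X_1>x^{1/(p+1)}))$. This uses the standard argument that reaching reward $x$ requires $\max S\cdot\beta_1\gtrsim x^{1/(p+1)}\cdot x^{p/(p+1)}$, which forces several moderate jumps; each additional jump costs a factor of order $x^{-\alpha/(p+1)}$ up to slowly varying factors.
\item The event of two jumps above the threshold within the cycle has probability $O(\EB[\beta_1^2]\,\PB(X_1>\delta x^{1/(p+1)})^2)$, which is negligible.
\item On the event of exactly one big jump, the reward is bounded by the pre-jump reward, which is $O_P(1)$, plus the post-jump drained reward. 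The latter is at most $(y+M)^{p+1}/(a(1-\eta)(p+1))$ with high probability, where $M$ is the pre-jump height.
\end{itemize}
Letting $\delta,\eta\to0$ and using the continuity of the regularly varying tail then matches the lower bound. Throughout, I would cite Borovkov \cite{borovkov2003integral} and Denisov \cite{denisov2021tail} for the $p=1$ template.

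Finally, the consequence follows directly. The tail $\PB(X_1>t)\sim\PB(U>t)$ is regularly varying with index $-\alpha$, so composing it with $t=(a(p+1)x)^{1/(p+1)}$ yields regular variation with index $-\alpha/(p+1)$. Taking $p=1$ gives index $-\alpha/2$.
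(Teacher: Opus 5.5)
Your overall strategy (one big jump followed by a fluid drain along $y-aj$, with the constant $\frac{1}{a(p+1)}=\int_0^{1/a}(1-at)^p\,dt$) is sound, but one claim is false and it corrupts a step.

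With regularly varying service times, $\beta_1$ does \emph{not} have an exponentially decaying tail. The busy cycle is itself heavy-tailed: $\PB(\beta_1>t)\sim\EB[\beta_1]\PB(X_1>at)$ (Proposition~4 of \cite{denisov2021tail}). This is exactly your own mechanism, since a jump of size $y$ keeps the walk positive for about $y/a$ steps.

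As a result, your two-jump bound $O(\EB[\beta_1^2]\PB(X_1>\delta q_x)^2)$, with $q_x:=(a(p+1)x)^{1/(p+1)}$, is not justified. It treats $\beta_1$ as independent of the increments, whereas the first jump exceeding $\delta q_x$ lengthens the cycle by order $q_x$. The correct argument conditions on the time $\kappa$ of the first such jump. It then applies Wald's identity to the residual cycle started from $S_\kappa$, whose conditional expected length is $O(1+S_\kappa/a)$. This gives a bound of order $\EB[\beta_1]\,q_x\,\PB(X_1>\delta q_x)^2$, larger than yours by a factor of order $q_x$. It is still $o(\PB(X_1>q_x))$, because $q_x\PB(X_1>q_x)\to0$.

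The no-big-jump case needs similar care. The light tails you need are those of the \emph{truncated} walk, and the relevant crude bound is $R_p\le\beta_1M^p$, not $\beta_1 M$. So on $\{M\le\epsilon q_x\}$ you must rule out $\beta_1\gtrsim q_x/\epsilon^p$. The truncation level $\delta$ must also be chosen small relative to $\epsilon$, so that the Fuk--Nagaev exponents beat $\PB(X_1>q_x)$. With these repairs your route goes through.

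Apart from this, your route genuinely differs from the paper's. The paper truncates the excursion maximum $M$ rather than the increments.
\begin{itemize}
\item On $\{M\le\varepsilon q_x\}$ it uses $R_p\le\beta_1M^p$ together with the heavy tail of $\beta_1$ quoted above. This costs about $((p+1)\varepsilon^p)^{\alpha}\EB[\beta_1]\PB(X_1>q_x)$.
\item On $\{M>\varepsilon q_x\}$ it conditions on the first passage time $\sigma_y=k$ over $y=\varepsilon q_x$. It replaces $\{R_p>x\}$ by $\{S_k>q_x\}$ using the almost-sure fluid limit $H_p(v)/v^{p+1}\to\frac{1}{a(p+1)}$. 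Late passages $\sigma_y>N$ are controlled with the truncated-maximum asymptotics of \cite{denisov2021tail}.
\end{itemize}
The paper's route therefore needs no Fuk--Nagaev bound and no multiple-jump estimate, at the price of importing Proposition~4 of \cite{denisov2021tail} for $M$ and $\beta_1$.

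Your increment truncation is more self-contained and makes late jumps trivial: the first big jump occurs after time $N$ with probability at most $\PB(X_1>\delta q_x)\sum_{k>N}\PB(\beta_1\ge k)$. But it requires the extra case analysis above, carried out with the correct, heavy-tailed cycle structure.
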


\begin{thm}
\label{thm: queue}
    Let $\tau_1=\beta_1$, $D_1=\sum_{i=1}^{\beta_1}W_i$, and
    $\mu_W=\frac{\EB[D_1]}{\EB[\tau_1]}$.
    There exists a slowly varying function $\ell$
    such that, as $\varepsilon\downarrow0$,
    \begin{align}
        \varepsilon^{\frac{2}{\alpha}}\ell(\varepsilon^{-1})
        \left(
        Z_1\left(\frac{\cdot}{\varepsilon}\right)
        -\frac{\cdot}{\varepsilon}\mu_W
        \right)
        \overset{M_1}{\Rightarrow}
        L_{\frac{\alpha}{2}}\left(\frac{\cdot}{\EB[\tau_1]}\right),
        \label{eq: queue_fclt}
    \end{align}
    where $L_{\frac{\alpha}{2}}$ is a mean-zero, spectrally positive
    $\frac{\alpha}{2}$-stable L\'evy process. Moreover,
    $\frac{Z_1(t)-\mu_Wt}{t}\to0$ almost surely. Hence the marginal conditions
    in Section~\ref{sec: sec_bm} hold with $h=\frac{2}{\alpha}$ and mean $\mu_W$.
\end{thm}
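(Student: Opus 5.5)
The plan is the standard regenerative argument. The busy cycles of the M/G/1 queue form a renewal structure. With $\beta_0=0$ and $\beta_k$ the successive times at which $W$ returns to zero, the pairs $(\tau_k,D_k):=(\beta_k-\beta_{k-1},\sum_{i=\beta_{k-1}+1}^{\beta_k}W_i)$ are i.i.d. This holds because $W_{\beta_k}=0$, so the queue restarts afresh, and the arrival and service sequences are i.i.d. and independent of each other. Light traffic gives $\EB[\tau_1]<\infty$; in fact $\tau_1$ has a regularly varying tail of index $-\alpha$, so it has finite variance since $\alpha>2$. By Theorem~\ref{thm: queue_reward_tail} with $p=1$, $D_1$ has a regularly varying tail of index $-\frac{\alpha}{2}\in(-2,-1)$. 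Hence $\EB[D_1]<\infty$ and $\mu_W$ is well defined. Put $N(t):=\max\{k:\beta_k\le t\}$ and $\tilde D_k:=D_k-\mu_W\tau_k$, which are i.i.d. with mean zero. Then
\begin{align}
Z_1(t)-\mu_W t=\sum_{k=1}^{N(t)}\tilde D_k+R(t),
\end{align}
where $R(t)$ collects the partial current cycle, minus $\mu_W(t-\beta_{N(t)})$.

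The first step is to verify that $\tilde D_1$ lies in the domain of attraction of a spectrally positive $\frac{\alpha}{2}$-stable law. Its right tail is that of $D_1$, because $\mu_W\tau_1$ has a lighter tail, of index $-\alpha$. Its left tail is bounded by $\PB(\mu_W\tau_1>x)$, which is $o$ of the right tail. Since $\frac{\alpha}{2}\in(1,2)$, the classical stable FCLT gives, for some slowly varying $\ell$,
\begin{align}
\varepsilon^{2/\alpha}\ell(\varepsilon^{-1})\sum_{k=1}^{\lfloor \cdot/\varepsilon\rfloor}\tilde D_k\overset{J_1}{\Rightarrow}L_{\alpha/2}(\cdot).
\end{align}
The second step is the random time change. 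By the strong law for renewal processes, $\varepsilon N(\cdot/\varepsilon)\to\cdot/\EB[\tau_1]$ uniformly on compacts almost surely, and the limit is continuous and nondecreasing. The composition map is continuous at such limits (Whitt), so the sum with $N(\cdot/\varepsilon)$ terms converges to $L_{\alpha/2}(\cdot/\EB[\tau_1])$, after absorbing the constant time rescaling into the slowly varying normalization.

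The third step, which I expect to be the main obstacle, is controlling the remainder $R$. I must show that $\varepsilon^{2/\alpha}\ell(\varepsilon^{-1})\sup_{s\le T/\varepsilon}|R(s)|$ does not spoil the limit, and this is why the convergence is stated in $M_1$ rather than $J_1$. Inside an extreme busy cycle the accumulated delay rises continuously, roughly quadratically over a duration of order $x$. The prelimit path therefore climbs gradually to the jump size rather than jumping. The time spent on this climb is of order $\varepsilon^{-1/\alpha}=o(\varepsilon^{-1})$, so it vanishes on the normalized time scale. This monotone, within-cycle interpolation is exactly what the $M_1$ topology tolerates. Concretely, I would compare $Z_1(\cdot/\varepsilon)-\mu_W\cdot/\varepsilon$ with the process that adds each $\tilde D_k$ at the end of its cycle. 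On each cycle, the partial sum $\sum_{i\le j}W_i$ is monotone in $j$, and the drift term $\mu_W(t-\beta_{N(t)})$ is at most $\mu_W\max_{k\le N(T/\varepsilon)+1}\tau_k=O_p(\varepsilon^{-1/\alpha})$. That bound is $o(\varepsilon^{-2/\alpha})$ up to slowly varying factors. The $M_1$ distance between the two processes is then bounded by the maximal cycle length in normalized time together with this drift error, and both vanish. One must also account for the cycle straddling the horizon; this is handled by the same maximal-cycle bound. The $M_1$ convergence then follows from the $J_1$ convergence of the regenerated process via the triangle inequality for the $M_1$ metric.

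Finally, the strong law $\frac{Z_1(t)-\mu_W t}{t}\to0$ almost surely follows from the renewal-reward theorem, since $\EB[D_1]$ and $\EB[\tau_1]$ are finite. The remaining conditions of Assumption~\ref{asmp: fclt} hold by the properties of Lévy processes: the limit starts at $0$, is $\frac{2}{\alpha}$-self-similar with $h=\frac{2}{\alpha}\in(\frac12,1)$, and is quasi-left-continuous. These are the marginal hypotheses required in Section~\ref{sec: sec_bm}.
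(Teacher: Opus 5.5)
Your route is the paper's: regenerative decomposition into the i.i.d.\ mean-zero increments $D_k-\mu_W\tau_k$, the stable FCLT in $J_1$, and the random time change by the cycle-counting process. It finishes with an $M_1$ comparison against the process that jumps by $D_k-\mu_W\tau_k$ at $\beta_k$, using $W_i\ge0$ for within-cycle monotonicity and $\mu_W\max_k\tau_k$ for the drift. Interior cycles are handled exactly as in the paper.

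The step that fails as justified is the cycle straddling the horizon, which you say is handled by the same maximal-cycle bound. Since $d_{M_1,[0,T]}$ dominates the difference of terminal values, and the comparison process is flat over this partial cycle, you need
\begin{align*}
    \varepsilon^{\frac{2}{\alpha}}\ell(\varepsilon^{-1})
    \sum_{i=\beta_{N(T/\varepsilon)}+1}^{\lfloor T/\varepsilon\rfloor}W_i\topb0 .
\end{align*}
The bound $\mu_W\max_k\tau_k$ controls the length and drift of this cycle, but not the delay it has already accumulated. Bounding that delay through maximal quantities, e.g.\ by $\max_{k\le N(T/\varepsilon)+1}D_k$ or by $(\max_k\tau_k)(\max_{i\le T/\varepsilon}W_i)$, gives something of the same order as the normalizing sequence $\varepsilon^{-2/\alpha}/\ell(\varepsilon^{-1})$. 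The large $D_k$ are exactly what produce the stable jumps. After normalization you therefore only get $\OM_{\PB}(1)$, not $o_{\PB}(1)$.

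The missing idea is that the cycle covering a fixed time is size-biased rather than maximal, and is tight because $\EB[\tau_1]<\infty$. The paper proves this directly. Since $\{\beta_k\}$ is strictly increasing, $\sum_{k\ge0}\PB(\beta_k=s)\le1$ for every integer $s$. Also, $(D_{k+1},\tau_{k+1})$ is independent of $\beta_k$. Summing over $k$ and over the value $s\le\lfloor t\rfloor$ of $\beta_k$ gives
\begin{align*}
    \PB\bigl(D_{N(t)+1}>x\bigr)
    \le\sum_{r\ge0}\PB(D_1>x,\tau_1>r)
    =\EB\bigl[\tau_1\mathbbm{1}\{D_1>x\}\bigr],
\end{align*}
uniformly in $t$. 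The analogous bound holds for $\tau_{N(t)+1}$. Thus the straddling cycle contributes $\OM_{\PB}(1)$ before normalization, and it vanishes after multiplication by $\varepsilon^{2/\alpha}\ell(\varepsilon^{-1})\to0$. With this ingredient replacing the maximal-cycle claim, your argument goes through.
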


\begin{thm}
\label{thm: queue_early_stopping}
    Under the conditions of Theorem~\ref{thm: queue}, fix an integer
    $m\ge2$ and construct $Z_2^{*,\iota}$ as in
    Section~\ref{sec: sec_bm}, where
    $\iota\in\{\text{sec},\text{bm},\text{rs}\}$, using $m$ independent
    queueing systems for sectioning. Then
    Assumption~\ref{asmp: early_stopping} holds with
    $Z_2=Z_2^{*,\iota}$ for every
    $0<\nu<\frac{\alpha}{\alpha-2}$.
\end{thm}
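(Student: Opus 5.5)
To verify Assumption~\ref{asmp: early_stopping} with $Z_2=Z_2^{*,\iota}$, I need to show that, for every fixed $K>0$, the scaling process cannot fall to the level $K\varepsilon n^2$ anywhere on $\varepsilon^{-\nu}\le n\le\eta a_\varepsilon$, except with probability that vanishes as $\eta\downarrow0$. Here $h=\frac{2}{\alpha}$, so $a_\varepsilon$ is regularly varying of index $\frac{1}{1-h}=\frac{\alpha}{\alpha-2}$. This explains the range of $\nu$: the window $[\varepsilon^{-\nu},\eta a_\varepsilon]$ is nonempty only for $\nu<\frac{\alpha}{\alpha-2}$. The event in~\eqref{eq: early_stopping} is $\{\exists n:\ Z_2^{*,\iota}(n)\le K^2\varepsilon^2n^2\}$. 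I will split the window dyadically into blocks $I_k=[2^{k},2^{k+1}]$ with $2^{k}\le \eta a_\varepsilon$, and bound the probability of the event on each block by a union bound.

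On a block $I_k$ with $t\asymp 2^k$, I compare $Z_2^{*,\iota}(t)$ with its natural scale $A(t)^{2}t^{2}/t^{2}\cdot$, that is, with $t^{2h}\ell(t)^{-2}$. Writing $t^{2h}\ell(t)^{-2}=t^2/A(t)^2$, the threshold becomes $K^2\varepsilon^2t^2$. So the block event requires
\[
\frac{Z_2^{*,\iota}(t)}{t^{2h}\ell(t)^{-2}}\le K^2\varepsilon^2A(t)^2 .
\]
Because $A$ is regularly varying with positive index and $t\le\eta a_\varepsilon$, Potter bounds give $\varepsilon A(t)\le C(t/a_\varepsilon)^{1-h-\kappa}$ for small $\kappa>0$. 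The right-hand side therefore is at most $C K^2\rho_k$, where $\rho_k:=(2^k/a_\varepsilon)^{2(1-h-\kappa)}$ decays geometrically as $k$ decreases and satisfies $\sum_k\rho_k^{\theta}\lesssim\eta^{\theta'}$. Monotonicity of $tZ_2^{*,\iota}(t)=\int_0^tZ_2^\iota$ lets me replace the infimum over $I_k$ by the value at the left endpoint, up to a factor $2$.

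The problem thus reduces to a uniform small-ball estimate: for some $\theta>0$ and all large $t$,
\[
\PB\bigl(Z_2^{*,\iota}(t)\le x\,t^{2h}\ell(t)^{-2}\bigr)\le Cx^{\theta}.
\]
With this, summing over $k$ gives a bound of order $\eta^{\theta'}$ for large $\varepsilon$, which proves the claim. The lower end $n\ge\varepsilon^{-\nu}$ guarantees that all blocks involve large $t$. I will derive the small-ball bound from the regenerative structure of the queue. The busy cycles $(\tau_j,D_j)$ are i.i.d., and by Theorem~\ref{thm: queue_reward_tail} the centered cycle rewards $D_j-\mu_W\tau_j$ lie in the domain of attraction of a spectrally positive $\frac{\alpha}{2}$-stable law.

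For random scaling and batch means, $Z_2^{*,\iota}(t)$ dominates a constant multiple of $\frac1t\int_{t/2}^{t}(\cdot)^2$ of the centered bridge increments of $Z_1$ over a fixed fraction of $[0,t]$. For sectioning, it dominates the squared difference of two independent copies at the same scale. In each case, smallness forces the centered partial sums $Z_1(s)-\mu_Ws$ to stay within $\sqrt{x}\,t^{h}\ell(t)^{-1}$ of a linear function over a macroscopic interval. I will condition on the number of cycles in sub-intervals and use a Lévy concentration-function (Kolmogorov--Rogozin) inequality for sums of roughly $t$ i.i.d.\ cycle rewards. This yields the bound $Cx^{\theta}$ uniformly in $t$, using the regular-variation tail of $D_1$ to control the concentration function at scale $t^{h}\ell(t)^{-1}$.

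The main obstacle is this uniform small-ball bound. Theorem~\ref{thm: queue} together with the construction theorems gives only fixed-$t$ convergence to $Y_2^{*,\iota}(1)$, which is positive by~\eqref{eq: scaling_nondegeneracy}. That yields $\limsup_t$ control for each fixed $x$ but not a rate summable over blocks. If the concentration-function route proves delicate, the fallback is coarser. For blocks with $2^k\ge \eta' a_\varepsilon$ for a fixed small $\eta'$, I will use weak convergence plus positivity of $\inf_{s\in[\eta',\eta]}Y_2^{*,\iota}(s)/s^{2h}$ on compacts. For the finitely many dyadic scales in the genuinely small-time regime, I will use the inequality $\PB(\text{block}_k)\le\PB(\text{ratio}\le CK^2\rho_k)$ together with the convergence of the ratio's distribution to a law with no atom at $0$, arguing by monotone comparison across scales.
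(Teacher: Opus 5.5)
Your outer skeleton matches the paper's: dyadic blocks $t_j=2^j\lceil\varepsilon^{-\nu}\rceil$, monotonicity of $tZ_2^{*,\iota}(t)$ to pass to left endpoints, Potter bounds giving $\varepsilon A(t)\lesssim(t/a_\varepsilon)^{\frac{\alpha-2}{2\alpha}}$, and a small-ball bound obtained from the regenerative structure via Kolmogorov--Rogozin. The gap is in that small-ball bound, which is the whole content of the theorem. The mechanism you sketch for it would not work, for two reasons.

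\textbf{No fixed linear statistic.} Kolmogorov--Rogozin controls $\PB(|\sum_k a_k\xi_k-x|\le z)$ only for a \emph{fixed} linear combination of \emph{independent} variables. Your reduction, that smallness forces $Z_1(s)-\mu_Ws$ to stay within $\sqrt{x}\,t^h\ell(t)^{-1}$ of a linear function, does not produce such a statistic. An $L^2$ bound gives no sup-norm control, and for random scaling the linear function has a random slope. The paper instead uses Cauchy--Schwarz to get $Z_2^{*,\iota}(t)\ge c_\iota\Lambda_\iota(t)^2$. Here $\Lambda_\iota(t)$ is a fixed time average over $s\in[\frac t2,t]$ of a linear contrast of $Z_1$; for random scaling the average is over $u\in[\frac t2,t]$ and $s\in[\frac u4,\frac u2]$.

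\textbf{Conditioning destroys what you need.} Conditioning on the number of cycles breaks the i.i.d.\ structure. Given $\chi(s)=N$, the constraint $\beta_N\le s<\beta_{N+1}$ couples all $\tau_k$, and hence all $D_k$. Conditioning on all cycle lengths restores independence but removes most of the anti-concentration. The heavy tail of $D_k$ is produced by long cycles: a jump of size $y$ gives $\tau_k\approx y/a$ and $D_k\approx y^2/(2a)$. So once the lengths are fixed, the rewards are nearly determined at scale $t^{2/\alpha}$.

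\textbf{The missing idea} is to decouple the random clock rather than condition on it. The paper replaces $Z_1(s)-\mu_Ws$ by $\widehat Z_1(s)=\sum_{k\le s/\EB[\tau_1]}(D_k-\mu_W\tau_k)$. It then bounds $\int_0^t|Z_1(s)-\mu_Ws-\widehat Z_1(s)|\,\mathrm ds$ by
$\sum_k(D_k+\mu_W\tau_k)\tau_k+\max_k|\beta_k-k\EB[\tau_1]|\sum_k|D_k-\mu_W\tau_k|$.
This is at most $t^{\frac32+\delta}$ off an event of probability $O(t^{-c})$. After dividing by $t$, the error is $o(t^{2/\alpha})$ precisely because $2<\alpha<4$: $\alpha>2$ gives $\EB[\tau_1^2]<\infty$, and $\alpha<4$ gives $\frac2\alpha>\frac12$.

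With the clock decoupled, $\widehat\Lambda_\iota(t)$ is a deterministic combination of i.i.d.\ cycle rewards with at least $c_0t$ coefficients in $[\frac58,1]$. Kolmogorov--Rogozin then gives $\sup_x\PB(|\widehat\Lambda_\iota(t)-x|\le z)\lesssim(t\,\PB(D_1-\mu_W\tau_1>4z))^{-1/2}$.

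Two smaller points:
\begin{itemize}
\item Your target bound $Cx^\theta$ is false as stated. For fixed $t$ and $x\downarrow0$, every copy has $W_i=0$ for all $i\le t$ with probability at least $\PB(U_0\le A_1)^{m\lfloor t\rfloor}>0$, which forces $Z_2^{*,\iota}(t)=0$. The bound must read $Cx^\theta+Ct^{-c}$. The extra term is harmless only because it is summable over blocks starting at $\varepsilon^{-\nu}$.
\item Your fallback fails. The window $[\varepsilon^{-\nu},\eta'a_\varepsilon]$ contains of order $\log(1/\varepsilon)$ dyadic scales, not finitely many. Weak convergence supplies no rate that can be summed over them.
\end{itemize}
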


%% file: tex/appendix.tex
\subsection{Auxiliary Lemmas}
\begin{lem}
\label{lem: phi_cont}
    Let $x_n,x\in\mathbb D_+$ satisfy
    \begin{align*}
        x_n\to x
    \end{align*}
    in the $M_2$ topology on compact time intervals. Fix $0<\eta<M<\infty$
    such that $x$ is continuous at $\eta$ and $M$, and define
    \begin{align*}
        \mathcal I_{\eta,M}(x)(y)
        :=\inf\left\{t\in[\eta,M]:
        \frac{t}{x(t)}>y\right\}\wedge M,
        \qquad y>0.
    \end{align*}
    Here $\frac{t}{0}:=+\infty$ and $\inf\varnothing:=+\infty$. Then
    \begin{align*}
        \mathcal I_{\eta,M}(x_n)\to
        \mathcal I_{\eta,M}(x)
    \end{align*}
    locally in the $M_1$ topology on $(0,\infty)$. Moreover, if
    $\delta_n\downarrow0$ and
    \begin{align*}
        \mathcal I_{\eta,M}^{[\delta]}(z)(y)
        :=\inf\left\{t\in\delta\mathbb N\cap[\eta,M]:
        \frac{t}{z(t)}>y\right\}\wedge M,
        \qquad y>0,
    \end{align*}
    then
    \begin{align*}
        \mathcal I_{\eta,M}^{[\delta_n]}(x_n)
        \to\mathcal I_{\eta,M}(x)
    \end{align*}
    locally in the $M_1$ topology on $(0,\infty)$.
\end{lem}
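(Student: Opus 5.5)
The plan is to exploit monotonicity and reduce $M_1$ convergence to pointwise convergence at continuity points. For any $z\in\mathbb D_+$, the map $y\mapsto\mathcal I_{\eta,M}(z)(y)$ is nondecreasing, because increasing $y$ shrinks the set $\{t:\frac{t}{z(t)}>y\}$. It also takes values in $[\eta,M]$. The same holds for the grid version $\mathcal I^{[\delta]}_{\eta,M}(z)$. For nondecreasing, uniformly bounded functions, local $M_1$ convergence on $(0,\infty)$ is equivalent to pointwise convergence on a dense subset of $(0,\infty)$ that contains the continuity points of the limit (the standard characterization of $M_1$ convergence for monotone functions, as in Whitt's book). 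It therefore suffices to prove, for every $y>0$,
\begin{align*}
    \mathcal I_{\eta,M}(x)(y-)\le\liminf_{n}\mathcal I_{\eta,M}(x_n)(y)\le\limsup_n\mathcal I_{\eta,M}(x_n)(y)\le\mathcal I_{\eta,M}(x)(y+),
\end{align*}
and the analogous sandwich with $\mathcal I^{[\delta_n]}_{\eta,M}(x_n)$ in the middle. Write $g(t)=\frac{t}{x(t)}$ on $[\eta,M]$.

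\emph{Upper bound.} Fix $y'>y$ with $s:=\mathcal I_{\eta,M}(x)(y')<M$; if no such $y'$ exists the bound is trivial by the cap at $M$. Then there are points $t$ arbitrarily close to $s$ from the right, within $[\eta,M]$, with $g(t)>y'$. Right-continuity of $x$ (with the convention $\frac{t}{0}=+\infty$) makes $\{g>y'\}\cap[\eta,M]$ contain a nondegenerate interval starting near $s$. The continuity points of $x$ are dense, and at each of them $M_2$ convergence gives $x_n(t)\to x(t)$. Choosing such a $t$ in that interval yields $\frac{t}{x_n(t)}>y$ for all large $n$, so $\limsup_n\mathcal I_{\eta,M}(x_n)(y)\le t$. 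Letting $t\downarrow s$ and then $y'\downarrow y$ gives the upper bound. If $s=\eta$, continuity of $x$ at $\eta$ handles the left endpoint. For the grid version, the same interval contains points of $\delta_n\mathbb N$ for all large $n$ once $\delta_n$ is below its length. Those grid points are not fixed, so I would use a small uniform neighbourhood instead: $M_2$ Hausdorff convergence of the completed graphs keeps $x_n$ uniformly close to values of $x$ on that neighbourhood, and these stay below $\frac{t}{y'}+o(1)$.

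\emph{Lower bound.} Fix $y''<y$ and $s<\mathcal I_{\eta,M}(x)(y'')$. Then $x(t)\ge\frac{t}{y''}$ on $[\eta,s]$, and by taking left limits also $x(t-)\ge\frac{t}{y''}$. Every point of the completed graph of $x$ over $[\eta,s]$ therefore satisfies $\text{value}\ge\frac{t}{y''}$. Convergence in $M_2$ means the Hausdorff distance between completed graphs tends to zero. So for large $n$, each point $(t,x_n(t))$ with $t\in[\eta,s]$ is within $\kappa$ of a graph point $(t',v)$ of $x$ with $v\ge\frac{t'}{y''}$. This gives $x_n(t)\ge\frac{t-\kappa}{y''}-\kappa>\frac{t}{y}$ uniformly, using the margin $y''<y$ and $t\ge\eta>0$. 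Continuity at $\eta$ and $M$ ensures the localization to $[\eta,s]$ loses nothing at the endpoints. Hence no $t\in[\eta,s]$ satisfies $\frac{t}{x_n(t)}>y$, so $\mathcal I_{\eta,M}(x_n)(y)\ge s$. The grid version inherits this bound, since an infimum over a subset is larger.

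\emph{Main obstacle.} The delicate step is turning $M_2$ (graph-Hausdorff) closeness into the pointwise strict inequalities for $\frac{t}{x_n(t)}$ uniformly in $t$, near jumps of $x$ and near the endpoints $\eta$ and $M$. This is where continuity of $x$ at $\eta$ and $M$ and the slack $y''<y<y'$ are used.
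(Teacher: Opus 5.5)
Your argument is correct, but it takes a different route from the paper.

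\textbf{The paper's route.} The paper never uses monotonicity of the inverse directly. It pushes the completed graphs through the Lipschitz map $(t,z)\mapsto\bigl(t,\frac{t}{t+z}\bigr)$ to get $M_2$ convergence of $\rho_n=\frac{t}{t+x_n}$ on $[\eta,M]$. This is where continuity of $x$ at $\eta$ and $M$ enters, to justify restricting the $M_2$ convergence to that interval. It then extends $\rho$ linearly beyond $M$ so the path is unbounded above, and invokes Whitt's Corollary 13.6.5 (continuity of the first-passage map from $M_2$ to $M_1$). For the grid version it builds the sampled step path $\widehat r_n$ and proves, by a separate completed-graph argument, that it still converges to $r_x$ in $M_2$. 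It then applies the same corollary and notes that the two grid constructions differ by at most $\delta_n$.

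\textbf{Your route.} You reduce everything to the monotone criterion for $M_1$ convergence (pointwise convergence on a dense set containing the endpoints). You then prove the sandwich
$\mathcal I_{\eta,M}(x)(y-)\le\liminf_n\le\limsup_n\le\mathcal I_{\eta,M}(x)(y)$
by hand, using only pointwise convergence at continuity points of $x$ and Hausdorff closeness of completed graphs.

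\textbf{What each buys.} Your route is more elementary and makes the grid version nearly free. The lower bound is inherited because the grid infimum runs over a subset. The upper bound needs only the local uniform control you sketch: on a subinterval strictly inside the right-continuity interval of a crossing point, the completed graph of $x$ lies below $\frac{t}{y'}$. The slack $t\bigl(\frac{1}{y}-\frac{1}{y'}\bigr)\ge\eta\bigl(\frac{1}{y}-\frac{1}{y'}\bigr)$ then absorbs the $M_2$ error. Your argument also shows that continuity of $x$ at $M$ is not actually needed, since the cap $\wedge M$ makes the value at $M$ irrelevant. The paper's route is more modular: it delegates inverse-map continuity to a standard theorem, at the cost of verifying $M_2$ convergence of the auxiliary transformed and sampled paths.

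\textbf{One small correction.} In your lower bound, the graph point $(t',v)$ approximating $(t,x_n(t))$ may have $t'$ slightly to the right of $s$. What handles this is your strict choice $s<\mathcal I_{\eta,M}(x)(y'')$, not continuity at $M$. That choice gives $x(t')\ge\frac{t'}{y''}$ and $x(t'-)\ge\frac{t'}{y''}$ on $[\eta,s+\kappa]$ for small $\kappa$. Continuity at $\eta$ is genuinely needed only at the left end.
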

\begin{proof}
    The continuity of $x$ at $\eta$ and $M$ ensures that restriction to
    $[\eta,M]$ preserves the assumed $M_2$ convergence. For a c\`adl\`ag
    function $f$, let
    $\Gamma_f$ denote its completed graph on this interval, written in
    time--space coordinates, and define
    \begin{align*}
        \rho_n(t)&:=\frac{t}{t+x_n(t)},
        \qquad
        \rho(t):=\frac{t}{t+x(t)},
        \qquad t\in[\eta,M],\\
        \mathcal T(t,z)&:=\left(t,\frac{t}{t+z}\right),
        \qquad t\in[\eta,M],\quad z\ge0.
    \end{align*}
    Since $z\mapsto \frac{t}{t+z}$ is continuous and decreasing, $\mathcal T$ maps each
    vertical segment in the completed graph of $x_n$ or $x$
    onto the corresponding vertical segment of the transformed path.
    Consequently,
    \begin{align*}
        \Gamma_{\rho_n}=\mathcal T(\Gamma_{x_n}),
        \qquad
        \Gamma_\rho=\mathcal T(\Gamma_x).
    \end{align*}
    Moreover, for $t,s\ge\eta$ and $z,w\ge0$,
    \begin{align*}
        \left|\frac{t}{t+z}-\frac{s}{s+w}\right|
        &\le \frac{1}{4\eta}|t-s|+\frac{1}{\eta}|z-w|.
    \end{align*}
    Thus, under the maximum norm, $\mathcal T$ is Lipschitz on
    $[\eta,M]\times[0,\infty)$. Since the univariate $M_2$ metric is the
    Hausdorff metric between completed graphs, it follows that
    \begin{align*}
        d_{M_2}(\rho_n,\rho)
        &\le \left(1\vee\frac{5}{4\eta}\right)
        d_{M_2}(x_n,x)
        \to0
    \end{align*}
    on $[\eta,M]$.

    Let $L=M-\eta$ and define the unbounded extension
    \begin{align*}
        r_x(s):=
        \begin{cases}
            \rho(\eta+s),&0\le s<L,\\
            1+s-L,&s\ge L,
        \end{cases}
    \end{align*}
    with $r_{x_n}$ defined analogously. The endpoint continuity and the
    preceding completed-graph bound imply that $r_{x_n}\to r_x$ locally in
    $M_2$. Moreover,
    \begin{align*}
        \frac{t}{x(t)}>y
        \quad\Longleftrightarrow\quad
        \frac{t}{t+x(t)}>\frac{y}{1+y},
        \qquad t\in[\eta,M].
    \end{align*}
    If $r_x^{-1}(v):=\inf\{s\ge0:r_x(s)>v\}$, then
    \begin{align*}
        \mathcal I_{\eta,M}(x)(y)
        =\eta+r_x^{-1}\left(\frac{y}{1+y}\right).
    \end{align*}
    Corollary 13.6.5 of \cite{whitt2002stochastic}, followed by the continuous
    level change $y\mapsto \frac{y}{1+y}$ and addition of $\eta$, gives the first
    assertion.

    It remains to treat the vanishing grid. Enumerate
    \begin{align*}
        \delta_n\mathbb N\cap[\eta,M)
        =\{g_{n,0}<\cdots<g_{n,N_n}\},
        \qquad q_{n,j}:=g_{n,j}-\eta.
    \end{align*}
    For all sufficiently large $n$, this set is nonempty, and the partition
    formed by $0,q_{n,0},\ldots,q_{n,N_n},L$ has mesh at most
    $\delta_n$. Define the c\`adl\`ag sampled extension
    \begin{align*}
        \widehat r_n(s):=
        \begin{cases}
            r_{x_n}(q_{n,0}),&0\le s<q_{n,0},\\
            r_{x_n}(q_{n,j}),&q_{n,j}\le s<q_{n,j+1},
            \quad 0\le j<N_n,\\
            r_{x_n}(q_{n,N_n}),&q_{n,N_n}\le s<L,\\
            1+s-L,&s\ge L.
        \end{cases}
    \end{align*}
    We claim that $\widehat r_n\to r_x$ locally in $M_2$. To see this,
    fix a compact time interval and use the completed-graph characterization
    of the univariate $M_2$ topology. Every point on a horizontal segment of
    $\Gamma_{\widehat r_n}$ is within $\delta_n$ in time of a point of
    $\Gamma_{r_{x_n}}$. Every point on a vertical segment lies between two
    sampled values whose time coordinates are at most $\delta_n$ apart. After
    passing to a subsequence, the two sampled values converge to points in
    the same vertical fiber of $\Gamma_{r_x}$. This fiber is an interval and
    therefore also contains the limit of every value between the two sampled
    values. Since $\Gamma_{r_{x_n}}\to\Gamma_{r_x}$ in Hausdorff distance,
    every limit point of $\Gamma_{\widehat r_n}$ belongs to
    $\Gamma_{r_x}$.

    Conversely, let $(s,z)\in\Gamma_{r_x}$. If $r_x$ is continuous at
    $s$, evaluation at a grid point converging to $s$ approximates
    $(s,r_x(s))$. If $s$ is a jump time, first choose continuity points
    $a<s<b$. Grid points approaching $a$ and $b$ have sampled values
    converging to $r_x(a)$ and $r_x(b)$, respectively. The completed graph of
    $\widehat r_n$ between those grid points is connected, so its spatial
    projection contains all values between the two endpoint values. Letting
    first $n\to\infty$ and then $a\uparrow s$ and $b\downarrow s$
    approximates every point of $[r_x(s-),r_x(s)]$. The common linear
    extension handles times after $L$. These two inclusions prove
    \begin{align*}
        \widehat r_n\to r_x
        \qquad\text{locally in }M_2.
    \end{align*}
    Applying Corollary 13.6.5 of \cite{whitt2002stochastic} once more gives
    $\widehat r_n^{-1}\to r_x^{-1}$ locally in $M_1$. For $0<v<1$,
    the two grid constructions can differ only on the initial grid cell, and
    hence
    \begin{align*}
        \sup_{0<v<1}\left|
        \eta+\widehat r_n^{-1}(v)
        -\mathcal I_{\eta,M}^{[\delta_n]}(x_n)
        \left(\frac{v}{1-v}\right)\right|\le\delta_n.
    \end{align*}
    The level change $v=\frac{y}{1+y}$ now proves the second assertion.
\end{proof}

\begin{lem}
    \label{lem: continuity_stop}
    Let $Z(\cdot)$ be a positive c\`adl\`ag process with no negative jumps,
    and, for $u>0$, define
    \begin{align*}
        \tau(u):=\inf\{s>0:Z(s)<u\}.
    \end{align*}
    On $\{0<\tau(u)<\infty\}$, the process $Z(\cdot)$ is continuous at
    $\tau(u)$.
\end{lem}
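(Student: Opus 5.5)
The plan is to argue by contradiction, working on the event $\{0<\tau(u)<\infty\}$. Write $\tau=\tau(u)$. Since $Z$ is c\`adl\`ag, it is right-continuous at $\tau$. Continuity at $\tau$ therefore reduces to $Z(\tau-)=Z(\tau)$. Because $Z$ has no negative jumps, $\Delta Z(\tau)=Z(\tau)-Z(\tau-)\ge0$, so the only case left to exclude is a strictly positive jump, $Z(\tau-)<Z(\tau)$.

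First I would establish two inequalities from the definition of $\tau$ as an infimum over $s>0$.

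\emph{Left-limit bound.} Since $\tau>0$, every $s\in(0,\tau)$ satisfies $Z(s)\ge u$. Letting $s\uparrow\tau$ gives $Z(\tau-)\ge u$.

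\emph{Value at $\tau$.} By the infimum property there is a sequence $s_k\downarrow\tau$ (possibly with $s_k=\tau$) such that $Z(s_k)<u$. Right-continuity then gives $Z(\tau)\le u$.

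The key step combines these with the jump sign: $u\le Z(\tau-)\le Z(\tau)\le u$. So all three quantities equal $u$, and in particular $Z(\tau-)=Z(\tau)$, which is the desired continuity. I would note that the conclusion does not require $Z(\tau)<u$; the weak inequality from right-continuity suffices.

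The argument is elementary, and the only subtle point is the role of the hypothesis $\tau>0$. It is exactly what makes the interval $(0,\tau)$ nonempty, so that the left limit is controlled. Finiteness of $\tau$ is what makes $Z(\tau)$ and $Z(\tau-)$ meaningful. Positivity of $Z$ is not needed for this step; it matters only for applications with $Z=\sqrt{Y_2(s)}/s$-type processes. Where the lemma is applied to $c\sqrt{Y_2(s)}/s$, note that $s\mapsto 1/s$ is continuous and decreasing, so this process still has no negative jumps whenever $Y_2$ has none, by Assumption~\ref{asmp: fclt}(e).
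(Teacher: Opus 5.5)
Your proposal is correct and follows the paper's proof essentially verbatim: you get $Z(\tau-)\ge u$ from $\tau>0$, $Z(\tau)\le u$ from right-continuity along $s_k\downarrow\tau$, and close with $u\le Z(\tau-)\le Z(\tau)\le u$ using the absence of negative jumps. One small point of presentation: although you announce a contradiction argument, what you actually give is the direct argument, and that is all that is needed.
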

\begin{proof}
    Fix $u>0$ and write $\tau=\tau(u)$. On
    $\{0<\tau<\infty\}$, the definition of $\tau$ gives
    $Z(s)\ge u$ for every $s<\tau$, and hence $Z(\tau-)\ge u$.
    There are times $s_k\downarrow\tau$ such that $Z(s_k)<u$, so right
    continuity gives $Z(\tau)\le u$. Since $Z$ has no negative jumps,
    $Z(\tau)\ge Z(\tau-)\ge u$. Thus
    $Z(\tau)=Z(\tau-)=u$, proving continuity at $\tau$.
\end{proof}
\begin{lem}
\label{lem:subquadratic_squares}
Let $(X_i)_{i\ge1}$ be a stationary sequence with
$\EB|X_1|<\infty$. Then
\begin{align*}
    \frac{1}{n^2}\sum_{i=1}^nX_i^2\to0
    \qquad\text{almost surely}.
\end{align*}
\end{lem}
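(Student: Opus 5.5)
Stationarity together with $\EB|X_1|<\infty$ allows neither the ergodic theorem applied to $X_i^2$ (the second moment may be infinite) nor any independence. The plan is to reduce the claim to a Borel--Cantelli bound on the individual terms and then use the elementary inequality $\sum_{i\le n} X_i^2\le \max_{i\le n}|X_i|\cdot\sum_{i\le n}|X_i|$.

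\emph{Step 1: $\frac{|X_n|}{n}\to0$ almost surely.} Fix $\eta>0$. By stationarity,
\begin{align*}
    \sum_{n\ge1}\PB\left(|X_n|>\eta n\right)=\sum_{n\ge1}\PB\left(|X_1|>\eta n\right)\le \frac{\EB|X_1|}{\eta}<\infty .
\end{align*}
The first Borel--Cantelli lemma needs no independence, so $|X_n|\le\eta n$ eventually, almost surely. Intersecting these events over rational $\eta$ gives $\frac{|X_n|}{n}\to0$ almost surely. From this, $\frac{1}{n}\max_{i\le n}|X_i|\to0$ almost surely. Indeed, given $\eta$, choose a random $N$ with $|X_i|\le\eta i$ for $i>N$. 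Then
\[
\max_{i\le n}|X_i|\le \max_{i\le N}|X_i|+\eta n ,
\]
and dividing by $n$ and letting $n\to\infty$ gives a $\limsup$ of at most $\eta$.

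\emph{Step 2: $\frac1n\sum_{i\le n}|X_i|$ is almost surely bounded.} For a stationary sequence with $\EB|X_1|<\infty$, Birkhoff's ergodic theorem applied to the shift on the path space of $(X_i)$ (ergodicity is not needed) gives $\frac1n\sum_{i\le n}|X_i|\to\EB\bigl[|X_1|\,\big|\,\mathcal I\bigr]<\infty$ almost surely, where $\mathcal I$ is the invariant $\sigma$-field. The limit is finite because its expectation is $\EB|X_1|$, so the averages are almost surely bounded.

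\emph{Step 3: conclusion.} Combining the two steps,
\begin{align*}
    \frac{1}{n^2}\sum_{i=1}^nX_i^2\le \left(\frac{1}{n}\max_{i\le n}|X_i|\right)\left(\frac{1}{n}\sum_{i=1}^n|X_i|\right)\to 0\cdot O(1)=0
\end{align*}
almost surely. The only genuinely substantive point is the appeal to the non-ergodic Birkhoff theorem in Step 2, and in particular checking that stationarity in the paper's sense (strict stationarity of the sequence) is enough for it. If only a weaker notion were intended, I would instead bound $\frac1n\sum|X_i|$ in probability and pass to subsequences, but strict stationarity appears to be what the applications (moving averages and regenerative queues) provide.
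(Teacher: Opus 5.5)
Your proof is correct and follows the same route as the paper: Borel--Cantelli with stationarity gives $\max_{i\le n}|X_i|/n\to0$ almost surely, Birkhoff's theorem (the paper applies it on ergodic components, you condition on the invariant $\sigma$-field, which amounts to the same thing) gives almost surely bounded averages of $|X_i|$, and the bound $\sum_{i\le n}X_i^2\le\max_{i\le n}|X_i|\sum_{i\le n}|X_i|$ concludes. Your version also spells out the passage from $|X_n|/n\to0$ to the running maximum, which the paper leaves implicit.
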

\begin{proof}
For every $\delta>0$, stationarity and integrability give
\begin{align*}
    \sum_{i=1}^{\infty}\PB(|X_i|>\delta i)
    =\sum_{i=1}^{\infty}\PB(|X_1|>\delta i)<\infty.
\end{align*}
The Borel--Cantelli lemma therefore gives
$\max_{1\le i\le n}\frac{|X_i|}{n}\to0$ almost surely. By Birkhoff's ergodic
theorem applied on each ergodic component,
$n^{-1}\sum_{i=1}^n|X_i|$ converges almost surely to a finite random
variable. Consequently,
\begin{align*}
    \frac{1}{n^2}\sum_{i=1}^nX_i^2
    \le
    \frac{\max_{1\le i\le n}|X_i|}{n}
    \frac{1}{n}\sum_{i=1}^n|X_i|\to0
\end{align*}
almost surely.
\end{proof}

\begin{lem}
\label{lem: kolmogorov_rogozin}
Let $X_1,\ldots,X_n$ be independent real-valued random variables. There
is a universal constant $C>0$ such that, for every $z>0$,
\begin{align*}
    \sup_{x\in\RB}\PB\left(\left|\sum_{i=1}^nX_i-x\right|\le z\right)
    \le\frac{C}{\sqrt{\sum_{i=1}^n\left(1-\sup_{x\in\RB}\PB(|X_i-x|\le z)\right)}}.
\end{align*}
\end{lem}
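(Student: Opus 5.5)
This is the classical Kolmogorov--Rogozin concentration inequality, and the most economical route is to cite it, e.g.\ Rogozin (1961) or the version in Petrov's monograph on sums of independent random variables (Chapter~III), which has exactly this form. For a self-contained argument I would follow Esseen's Fourier-analytic proof. Write $Q(Y;z):=\sup_x\PB(|Y-x|\le z)$ for the concentration function, $S=\sum_{i=1}^nX_i$, and let $\varphi_i$ be the characteristic function of $X_i$.

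\emph{Step 1 (Esseen's inequality).} First I would invoke the smoothing bound
\begin{align*}
Q(S;z)\le C_0\,z\int_{|t|\le 1/z}|\varphi_S(t)|\,\mathrm dt .
\end{align*}
It follows by integrating against a kernel whose Fourier transform is compactly supported on $[-1/z,1/z]$ and bounded below on $[-z,z]$, for instance a Fej\'er-type kernel.

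\emph{Step 2 (symmetrization).} Next I would pass to symmetrized summands. Let $\tilde X_i=X_i-X_i'$ with $X_i'$ an independent copy. Then $|\varphi_i(t)|^2=\EB\cos(t\tilde X_i)$, and the elementary bound $x\le e^{-(1-x^2)/2}$ for $x\in[0,1]$ gives
\begin{align*}
|\varphi_S(t)|\le\exp\Bigl(-\tfrac12\sum_i\EB[1-\cos(t\tilde X_i)]\Bigr).
\end{align*}
I would also record that conditioning on $X_i'$ yields $\PB(|\tilde X_i|\le z)\le Q(X_i;z)$. Hence $p_i:=\PB(|\tilde X_i|>z)\ge 1-Q(X_i;z)$, so it suffices to prove the claimed bound with the denominator $\sqrt{\sum_ip_i}$.

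\emph{Step 3 (the integral bound, main obstacle).} It remains to show
\begin{align*}
\int_{|t|\le 1/z}\exp\Bigl(-\tfrac12\sum_i\EB[1-\cos(t\tilde X_i)]\Bigr)\mathrm dt\le\frac{C_1}{z\sqrt{\sum_ip_i}} .
\end{align*}
The difficulty is that on $\{|\tilde X_i|>z\}$ the values $t\tilde X_i$ can be large, so $1-\cos$ need not be bounded below pointwise. I would first try the scaling trick from Esseen's proof. Substitute $t=s/z$ and enlarge the domain, using that the integrand is even and that $\int_{|s|\le1}f\le 2\int_{|s|\le\lambda}f$-type comparisons hold for positive-definite $f$. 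This reduces the problem to the average
\begin{align*}
\frac1{2T}\int_{-T}^{T}\bigl(1-\cos(s y)\bigr)\,\mathrm ds\ge c\,\min\bigl(1,(Ty)^2\bigr).
\end{align*}
Combining this with Jensen's inequality (convexity of $\exp$) over a suitable mixture, as in Kesten's refinement, gives the decay $\bigl(\sum_ip_i\bigr)^{-1/2}$.

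If the computation becomes unwieldy, I would fall back on the cited theorem, since only the stated form is used later in the paper. The constant $C$ is universal because $C_0$ and $C_1$ are absolute.
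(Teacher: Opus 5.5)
Your primary route matches the paper's proof, which applies Theorem~1 of Rogozin (1961) with $L=l=2z$; this fits the stated form because $\sup_x\PB(|Y-x|\le z)$ is the concentration function of $Y$ at window length $2z$. In your optional Fourier sketch, Step~3 does not close as written (a lower bound on the $s$-average of $1-\cos(sy)$ only controls the mean of the exponent, which cannot bound $\int e^{-g}$ from above, and Jensen in $s$ gives the reverse inequality), but the mixture idea you mention repairs it directly: with $P=\sum_ip_i$ and $\mu=\sum_i\frac{p_i}{P}\,\mathcal L(\tilde X_i\mid|\tilde X_i|>z)$, discard the part of each exponent on $\{|\tilde X_i|\le z\}$ and apply Jensen in $Y\sim\mu$ to get $|\varphi_S(t)|\le\EB_{Y\sim\mu}\exp\bigl(-\frac P2(1-\cos(tY))\bigr)$, and then use $\int_{|t|\le1/z}\exp\bigl(-\frac P2(1-\cos(ty))\bigr)\,\mathrm dt\le C(z\sqrt P)^{-1}$ uniformly in $|y|>z$, which follows from periodicity and $1-\cos\theta\ge2\theta^2/\pi^2$ on $[-\pi,\pi]$.
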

\begin{proof}
Apply Theorem~1 of \citet{rogozin1961estimate} with $L=l=2z$.
\end{proof}

\begin{lem}
    \label{lem: q_est}
    Let $H,c_\eta>0$, $0<\varrho<1$, $0<c_\eta H<1$, and
    $\eta_k=c_\eta(k+1)^{-\varrho}$. For $0\le k<j$, define
    \begin{align*}
        P_{k,j}:=\prod_{l=k+1}^j(1-H\eta_l),
        \qquad
        q_{k,j}:=H\eta_k\sum_{i=k+1}^jP_{k,i-1},
    \end{align*}
    with $P_{k,k}=1$ and $q_{k,k}=0$. Then, uniformly over $j>k$,
    \begin{align*}
        0\le q_{k,j}-1+P_{k,j}\lesssim(k+1)^{\varrho-1}.
    \end{align*}
\end{lem}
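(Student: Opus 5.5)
The plan is to rewrite $1-P_{k,j}$ as a telescoping sum, so that the difference $q_{k,j}-1+P_{k,j}$ becomes a single sum with an explicitly signed summand. That sum can then be bounded by splitting it into a near range and a far range.

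\emph{Telescoping identity.} Since $P_{k,i}=P_{k,i-1}(1-H\eta_i)$, we have $P_{k,i-1}-P_{k,i}=H\eta_iP_{k,i-1}$. Summing over $i=k+1,\dots,j$ gives
\begin{align*}
    1-P_{k,j}=H\sum_{i=k+1}^j\eta_iP_{k,i-1},
    \qquad\text{hence}\qquad
    q_{k,j}-1+P_{k,j}=H\sum_{i=k+1}^j(\eta_k-\eta_i)P_{k,i-1}.
\end{align*}
The assumption $0<c_\eta H<1$ gives $0<H\eta_l<1$ for every $l$, so each $P_{k,i-1}\in(0,1]$. Since $\eta$ is decreasing, each summand is nonnegative, which proves the lower bound $0\le q_{k,j}-1+P_{k,j}$.

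\emph{Upper bound.} Write $m=i-k\ge1$ and use $1-x\le e^{-x}$ to get $P_{k,i-1}\le\exp\bigl(-H\sum_{l=k+1}^{i-1}\eta_l\bigr)$. I split the sum into two ranges.

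\emph{Near range, $m\le k+1$.} Here $l+1\le 2k+3\le 3(k+1)$, so $\eta_l\ge c_\eta3^{-\varrho}(k+1)^{-\varrho}$. This yields $P_{k,i-1}\le\exp\bigl(-\kappa(m-1)(k+1)^{-\varrho}\bigr)$ with $\kappa=Hc_\eta3^{-\varrho}$. By the mean value theorem, $\eta_k-\eta_i\le c_\eta\varrho\, m\,(k+1)^{-\varrho-1}$. The near-range contribution is therefore at most
\begin{align*}
    C(k+1)^{-\varrho-1}\sum_{m\ge1}m\,e^{-\kappa(m-1)(k+1)^{-\varrho}}\lesssim(k+1)^{-\varrho-1}(k+1)^{2\varrho}=(k+1)^{\varrho-1},
\end{align*}
where I compare the sum with $\int_0^\infty(1+x)e^{-\kappa x(k+1)^{-\varrho}}\,dx\lesssim(k+1)^{2\varrho}$.

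\emph{Far range, $i\ge 2k+2$.} Bound $\eta_k-\eta_i\le\eta_k\lesssim(k+1)^{-\varrho}$. Comparing with an integral, $\sum_{l=k+1}^{i-1}\eta_l\gtrsim (i+1)^{1-\varrho}-(k+2)^{1-\varrho}$. Because $i\ge2k+2$, this quantity is at least $c'(i+1)^{1-\varrho}$ with $c'>0$ depending only on $\varrho$ and $c_\eta$. The far-range contribution is then at most $C(k+1)^{-\varrho}\sum_{i\ge2k+2}\exp(-c''i^{1-\varrho})$. Since $1-\varrho>0$, this tail sum is bounded by $C_N(k+1)^{-N}$ for every $N$, so it is $O\bigl((k+1)^{\varrho-1}\bigr)$.

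Adding the two ranges gives the claimed bound. All constants depend only on $H$, $c_\eta$ and $\varrho$, so the bound is uniform in $j>k$; truncating at $j$ only removes nonnegative terms.

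The main obstacle is the near range. The bound requires the exponential decay of $P_{k,i-1}$ to act on the natural scale $(k+1)^{\varrho}$, and this is why the range is cut at $m\le k+1$, where $\eta_l$ stays comparable to $\eta_k$. The remaining care is in the lower bound on $\sum\eta_l$ in the far range, where the constant must be positive uniformly in $k$.
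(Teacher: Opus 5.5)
Your proof is correct. It starts exactly as the paper's does, with the identity $q_{k,j}-1+P_{k,j}=H\sum_{i=k+1}^j(\eta_k-\eta_i)P_{k,i-1}$ and the same nonnegativity argument. The upper bound, however, is organized differently.

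The paper uses concavity of $x\mapsto x^\varrho$ to write $\eta_k-\eta_i\le\eta_i\frac{i-k}{k+1}$. The factor here is $\eta_i$ rather than $\eta_k$, so $H\eta_iP_{k,i-1}=P_{k,i-1}-P_{k,i}$ can be summed by parts. This reduces the whole expression to $\frac{1}{k+1}\sum_{i>k}P_{k,i-1}$, so only the first-moment ("lifetime") bound $\sum_{i>k}P_{k,i-1}\lesssim(k+1)^{\varrho}$ is needed. Because the concavity estimate is valid for every $i$, no splitting of the difference is required.

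You instead bound the difference by the mean value theorem. That bound is only useful where $\eta_l\asymp\eta_k$, which forces the near/far split. In the near range you use the second-moment-type sum $\sum_m m\,e^{-\lambda(m-1)}\lesssim\lambda^{-2}$ with $\lambda\asymp(k+1)^{-\varrho}$. In the far range you use a superpolynomially small tail.

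The paper's route is shorter. Its last step ("split into blocks of length $(k+1)^\varrho$"), though, compresses the large-$i$ regime into one sentence, and there $\eta_l$ is no longer comparable to $\eta_k$. Your far-range estimate handles that regime explicitly: the integral comparison together with $\frac{k+2}{i+1}\le\frac23$ gives a decay constant uniform in $k$. So your argument is more self-contained, at the cost of an extra case.
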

\begin{proof}
    The identity
    $P_{k,i-1}-P_{k,i}=H\eta_iP_{k,i-1}$ gives
    \begin{align*}
        q_{k,j}-1+P_{k,j}
        =H\sum_{i=k+1}^j(\eta_k-\eta_i)P_{k,i-1}\ge0.
    \end{align*}
    Concavity of $x\mapsto x^{\varrho}$ gives
    \begin{align*}
        \eta_k-\eta_i
        =\eta_i\left(\left(\frac{i+1}{k+1}\right)^{\varrho}-1\right)
        \le\eta_i\frac{i-k}{k+1}.
    \end{align*}
    Summation by parts therefore yields
    \begin{align*}
        q_{k,j}-1+P_{k,j}
        &\le\frac{1}{k+1}\sum_{i=k+1}^j
        (i-k)(P_{k,i-1}-P_{k,i})\\
        &\le\frac{1}{k+1}\sum_{i=k+1}^{\infty}P_{k,i-1}
        \lesssim(k+1)^{\varrho-1}.
    \end{align*}
    For the last bound, split the sum into blocks of length
    $(k+1)^{\varrho}$ and use the exponential bound for $P_{k,i-1}$.
\end{proof}

\begin{lem}
\label{lem: q_square}
    Under the conditions and notation of Lemma~\ref{lem: q_est},
    \begin{align*}
        \sum_{k=1}^i(\eta_{k-1}-\eta_i)^2P_{k-1,i-1}^2
        \lesssim i^{\varrho-2}.
    \end{align*}
\end{lem}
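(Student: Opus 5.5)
The plan is to bound the two factors separately by elementary means, using only the explicit form $\eta_k=c_\eta(k+1)^{-\varrho}$, and then split the sum according to whether $k$ is close to $i$ or not. Throughout, $\kappa>0$ denotes a constant depending only on $H$, $c_\eta$ and $\varrho$. It suffices to treat $i$ larger than a fixed threshold, since finitely many $i$ are absorbed into the implicit constant.

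\textbf{Step 1: bound on the product.} Using $1-x\le e^{-x}$ and $0<H\eta_l<1$, we get
\begin{align*}
P_{k-1,i-1}=\prod_{l=k}^{i-1}(1-H\eta_l)\le \exp\Bigl(-H\sum_{l=k}^{i-1}\eta_l\Bigr).
\end{align*}
Since $\eta_l\ge c_\eta(i+1)^{-\varrho}$ for every $l\le i$, this gives
\begin{align*}
P_{k-1,i-1}^2\le \exp\bigl(-\kappa (i-k)(i+1)^{-\varrho}\bigr).
\end{align*}

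\textbf{Step 2: bound on the step-size difference.} As in the proof of Lemma~\ref{lem: q_est}, concavity of $x\mapsto x^\varrho$ gives
\begin{align*}
0\le \eta_{k-1}-\eta_i=\eta_i\Bigl(\bigl(\tfrac{i+1}{k}\bigr)^{\varrho}-1\Bigr)\le \eta_i\,\frac{i+1-k}{k}.
\end{align*}

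\textbf{Step 3: the range $i/2<k\le i$.} Here $k\asymp i$ and $\eta_i\asymp i^{-\varrho}$. Writing $m=i-k$, each summand is at most
\begin{align*}
C\,i^{-2\varrho-2}(m+1)^2e^{-\kappa m i^{-\varrho}}.
\end{align*}
Comparing the sum over $m\ge0$ with $\int_0^\infty (x+1)^2e^{-\kappa x i^{-\varrho}}\,dx\lesssim i^{3\varrho}$ (substitute $x=i^{\varrho}y$), this part of the sum is
\begin{align*}
\lesssim i^{-2\varrho-2}\cdot i^{3\varrho}=i^{\varrho-2}.
\end{align*}

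\textbf{Step 4: the range $1\le k\le i/2$.} Here we use the crude bound $(\eta_{k-1}-\eta_i)^2\le\eta_{k-1}^2\le c_\eta^2$. For the product, we keep only the factors with $l\in[i/2,i-1]$ in Step 1, which gives
\begin{align*}
P_{k-1,i-1}^2\le \exp\bigl(-\kappa\, i^{1-\varrho}\bigr).
\end{align*}
Hence this part of the sum is at most $C\,i\,e^{-\kappa i^{1-\varrho}}$. Because $1-\varrho>0$, this decays faster than any power of $i$, and in particular it is $O(i^{\varrho-2})$.

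Adding Steps 3 and 4 proves the claim. The only delicate point is Step 3. There the bound $(i+1-k)/k$ from concavity is essential, because it supplies the $(m+1)^2 i^{-2}$ factor that compensates the effective summation length $i^{\varrho}$ coming from the exponential. The crude bound on $\eta_{k-1}-\eta_i$ would lose a factor of $i^{2}$ in this range.
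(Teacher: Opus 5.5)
Your proof is correct and follows essentially the same route as the paper. Both split at $k\approx i/2$. On the near range, both use the concavity bound $\eta_{k-1}-\eta_i\lesssim \frac{i-k+1}{i}\,\eta_i$ together with $P_{k-1,i-1}^2\le e^{-2H(i-k)\eta_i}$, which gives $\frac{\eta_i^2}{i^2}\cdot\eta_i^{-3}\asymp i^{\varrho-2}$. On the far range, both use the uniform bound $e^{-c\,i^{1-\varrho}}$ on the product. You have simply written out details the paper leaves implicit, such as the sum-to-integral comparison and keeping only the factors with $l\ge i/2$.
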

\begin{proof}
    For $\frac{i}{2}\le k\le i$, concavity and monotonicity give
    \begin{align*}
        \eta_{k-1}-\eta_i
        \le\frac{2(i-k+1)}{i}\eta_i,
        \qquad
        P_{k-1,i-1}^2
        \le\exp(-2H(i-k)\eta_i).
    \end{align*}
    Hence
    \begin{align*}
        \sum_{\frac{i}{2}\le k\le i}(\eta_{k-1}-\eta_i)^2P_{k-1,i-1}^2
        &\lesssim\frac{\eta_i^2}{i^2}
        \sum_{m=0}^{\infty}(m+1)^2e^{-2Hm\eta_i}\\
        &\lesssim\frac{1}{i^2\eta_i}\asymp i^{\varrho-2}.
    \end{align*}
    For $k<\frac{i}{2}$, the product is at most
    $\exp(-c_0i^{1-\varrho})$, uniformly in $k$, and the resulting sum is
    $o(i^{\varrho-2})$.
\end{proof}

\subsection{Proofs for Section~\ref{sec: frame}}

\subsubsection{Proof of Theorem~\ref{thm: stop_prop}}
\paragraph*{Proof of (a).} Let
\begin{align*}
    g(\varepsilon)
    :=\frac{1}{\varepsilon^{h-1}\ell(\varepsilon^{-1})},
    \qquad
    X_\varepsilon(t)
    :=\varepsilon^{h}\ell(\varepsilon^{-1})
    \sqrt{Z_2\left(\frac{t}{\varepsilon}\right)}.
\end{align*}
For every $u>0$, the stopping time $T_c(\cdot)$ satisfies
\begin{align}
    \varepsilon T_c\left(\frac{g(\varepsilon)}{u}\right)
    &=\varepsilon\inf\left\{n\in\mathbb N:
    n>\left(\frac{g(\varepsilon)}{u}\right)^{-\nu},\quad
    c\sqrt{\frac{Z_2(n)}{n^2}}<\frac{g(\varepsilon)}{u}\right\}\\
    &=\inf\left\{t\in\varepsilon\mathbb N:
    t>\varepsilon g(\varepsilon)^{-\nu}u^\nu,\quad
    \frac{t}{X_\varepsilon(t)}>cu\right\}.
    \label{eq: stop_1}
\end{align}
Since $\nu<\frac{1}{1-h}$,
$\varepsilon g(\varepsilon)^{-\nu}\to0$. Moreover,
Assumption~\ref{asmp: fclt} gives
\begin{align}
\label{eq: stop_2}
    X_\varepsilon(\cdot)
    \overset{M_2}{\Rightarrow}\sqrt{Y_2(\cdot)}.
\end{align}
By self-similarity and the c\`adl\`ag property, it can be simply verified
that every deterministic positive time is almost surely a continuity point
of $Y_2$. For fixed $0<\eta<M<\infty$, the extended continuous mapping theorem and
Lemma~\ref{lem: phi_cont} give
\begin{align}
    \mathcal I^{[\varepsilon]}_{\eta,M}(X_\varepsilon)(c\,\cdot)
    \overset{M_1}{\Rightarrow}
    \mathcal I_{\eta,M}\left(\sqrt{Y_2}\right)(c\,\cdot)
    \label{eq: truncated_inverse_convergence}
\end{align}
locally on $(0,\infty)$. Fix a compact interval $K\subset(0,\infty)$.
It remains to remove the truncation at $\eta$. Since
$\varepsilon g(\varepsilon)^{-\nu}\to0$, for fixed $\eta>0$ and all
sufficiently small $\varepsilon$, the lower bound in~\eqref{eq: stop_1}
is smaller than $\eta$ uniformly over $u\in K$:
\begin{align*}
    \varepsilon g(\varepsilon)^{-\nu}
    \max_{u\in K}u^\nu<\eta.
\end{align*}
Define the event
\begin{align*}
    E_{\varepsilon,\eta}
    :=\left\{
    \inf_{u\in K}
    \varepsilon T_c\left(\frac{g(\varepsilon)}{u}\right)>\eta
    \right\}.
\end{align*}
On $E_{\varepsilon,\eta}$, the definition in~\eqref{eq: stop_1} gives
\begin{align}
    \left[
    \varepsilon T_c\left(\frac{g(\varepsilon)}{\cdot}\right)
    \right]\wedge M
    =\mathcal I^{[\varepsilon]}_{\eta,M}(X_\varepsilon)(c\,\cdot)
    \qquad\text{on }K.
    \label{eq: remove_eta}
\end{align}
To control $\PB(E_{\varepsilon,\eta}^c)$, let $\underline u:=\min K$. Since
$u\mapsto T_c\left(\frac{g(\varepsilon)}{u}\right)$ is nondecreasing and
$a_{\frac{g(\varepsilon)}{\underline u}}\asymp\varepsilon^{-1}$, there is a
constant $C<\infty$ such that, for all sufficiently small $\varepsilon$,
\begin{align*}
\PB(E_{\varepsilon,\eta}^c)
&=\PB\left(
    T_c\left(\frac{g(\varepsilon)}{\underline u}\right)
    \le\frac{\eta}{\varepsilon}
    \right)\\
&\le\PB\left(
    \inf_{\substack{n\in\mathbb N:\,
    \left(\frac{g(\varepsilon)}{\underline u}\right)^{-\nu}\le n
    \le\frac{\eta}{\varepsilon}}}
    \frac{\sqrt{Z_2(n)}}{n}
    \le\frac{g(\varepsilon)}{c\underline u}
    \right)\\
&\le\PB\left(
    \inf_{\substack{n\in\mathbb N:\,
    \left(\frac{g(\varepsilon)}{\underline u}\right)^{-\nu}\le n
    \le C\eta a_{\frac{g(\varepsilon)}{\underline u}}}}
    \frac{\sqrt{Z_2(n)}}{n}
    \le\frac{g(\varepsilon)}{c\underline u}
    \right).
\end{align*}
Applying Assumption~\ref{asmp: early_stopping} gives
\begin{align*}
    \lim_{\eta\downarrow0}\limsup_{\varepsilon\downarrow0}
    \PB(E_{\varepsilon,\eta}^c)=0.
\end{align*}
Together with~\eqref{eq: remove_eta}, this shows that
\begin{align}
&\lim_{\eta\downarrow0}\limsup_{\varepsilon\downarrow0}
\PB\left(
\sup_{u\in K}\left|
\left[
\varepsilon T_c\left(\frac{g(\varepsilon)}{u}\right)
\right]\wedge M
-\mathcal I^{[\varepsilon]}_{\eta,M}(X_\varepsilon)(cu)
\right|>0
\right)=0.
\label{eq: lower_truncation_error}
\end{align}
We next study the limit of
$\mathcal I_{\eta,M}\left(\sqrt{Y_2}\right)(c\,\cdot)$ as $\eta\downarrow0$, with
$M$ fixed. For every $u>0$, the crossing sets
\begin{align*}
    \left\{t\in[\eta,M]:
    \frac{t}{\sqrt{Y_2(t)}}>cu\right\}
\end{align*}
increase to the corresponding set with $t\in(0,M]$ as
$\eta\downarrow0$. Hence,
\begin{align*}
    \mathcal I_{\eta,M}\left(\sqrt{Y_2}\right)(cu)
    &\to
    \inf\left\{t\in(0,M]:
    \frac{t}{\sqrt{Y_2(t)}}>cu\right\}\wedge M\\
    &=\Phi_c\left(\frac{1}{u}\right)\wedge M,
\end{align*}
almost surely. Since the inverse paths are nondecreasing, pointwise
convergence at continuity points yields
\begin{align}
    \mathcal I_{\eta,M}\left(\sqrt{Y_2}\right)(c\,\cdot)
    \overset{M_1}{\to}
    \Phi_c\left(\frac{1}{\cdot}\right)\wedge M
    \label{eq: limiting_truncated_inverse}
\end{align}
almost surely locally on $(0,\infty)$. Combining
\eqref{eq: truncated_inverse_convergence},
\eqref{eq: lower_truncation_error}, and
\eqref{eq: limiting_truncated_inverse}, the converging-together theorem
gives, for every fixed $M$,
\begin{align*}
    \left[
    \varepsilon T_c\left(\frac{g(\varepsilon)}{\cdot}\right)
    \right]\wedge M
    \overset{M_1}{\Rightarrow}
    \Phi_c\left(\frac{1}{\cdot}\right)\wedge M.
\end{align*}
It remains to remove the upper truncation. Let $\overline u:=\max K$ and
$M_\varepsilon:=\varepsilon\lfloor \frac{M}{\varepsilon}\rfloor$. Then
$M_\varepsilon\to M$, and $M_\varepsilon\in\varepsilon\mathbb N$.
For all sufficiently small $\varepsilon$, the lower bound
in~\eqref{eq: stop_1} is less than $M_\varepsilon$ uniformly over
$u\in K$. Hence, monotonicity and
\eqref{eq: stop_1} give
\begin{align*}
\PB\left(\sup_{u\in K}\varepsilon T_c\left(\frac{g(\varepsilon)}{u}\right)>M\right)
&=\PB\left(\varepsilon T_c\left(\frac{g(\varepsilon)}{\overline u}\right)>M\right)\\
&\le\PB\left(X_\varepsilon(M_\varepsilon)\ge\frac{M_\varepsilon}{c\overline u}\right).
\end{align*}
By~\eqref{eq: stop_2}, continuity of $Y_2$ at $M$, and
$M_\varepsilon\to M$, we have
$X_\varepsilon(M_\varepsilon)\tod\sqrt{Y_2(M)}$. Self-similarity then gives
\begin{align*}
\limsup_{\varepsilon\downarrow0}\PB\left(\varepsilon T_c\left(\frac{g(\varepsilon)}{\overline u}\right)>M\right)
&\le\PB\left(\sqrt{Y_2(M)}\ge\frac{M}{c\overline u}\right)\\
&=\PB\left(\sqrt{Y_2(1)}\ge\frac{M^{1-h}}{c\overline u}\right)\to0
\end{align*}
as $M\to\infty$. The same bound shows that the limiting inverse is almost
surely finite on $K$. Letting $M\to\infty$ gives
\begin{align}
    \varepsilon T_c\left(\frac{g(\varepsilon)}{\cdot}\right)
    \overset{M_1}{\Rightarrow}
    \Phi_c\left(\frac{1}{\cdot}\right).
\end{align}
Since $A(a_\varepsilon)\sim\varepsilon^{-1}$,
\begin{align*}
    \frac{g(a_\varepsilon^{-1})}{\varepsilon}\to1.
\end{align*}
Applying the preceding convergence with $a_\varepsilon^{-1}$ in place of
$\varepsilon$ gives
\begin{align*}
    \left[a_\varepsilon^{-1}T_c\left(
    \frac{g(a_\varepsilon^{-1})}{\cdot}\right)\right]
    \left(\frac{g(a_\varepsilon^{-1})}{\varepsilon}u\right)
    =a_\varepsilon^{-1}T_c\left(\frac{\varepsilon}{u}\right).
\end{align*}
The increasing homeomorphisms
$u\mapsto \frac{g(a_\varepsilon^{-1})u}{\varepsilon}$ converge locally uniformly
to the identity, so the $M_1$ time-change theorem gives
\begin{align*}
    a_\varepsilon^{-1}T_c\left(\frac{\varepsilon}{\cdot}\right)
    \overset{M_1}{\Rightarrow}
    \Phi_c\left(\frac{1}{\cdot}\right)
\end{align*}
locally on $(0,\infty)$.

\paragraph*{Proof of (b).} For any $t>0$ and $a>0$,
\begin{align}
    a\Phi_c(t) &= \inf\left\{as>0\left|c\sqrt{\frac{Y_2(s)}{s^2}}<t\right.\right\}\\
    &=\inf\left\{s>0\left|c\sqrt{\frac{Y_2(a^{-1}s)}{a^{-2}s^2}}<t\right.\right\}\\
    &\overset{d}{=}\inf\left\{s>0\left|c\sqrt{\frac{a^{-2h}Y_2(s)}{a^{-2}s^2}}<t\right.\right\}\\
    &=\Phi_c\left(a^{h-1}t\right),
\end{align}
where the third step is due to the self-similarity of $Y_2(\cdot)$. Moreover, as $x\to\infty$,
\begin{align*}
    \PB\left(\Phi_c(t)>x\right)
    &\le\PB\left(c\frac{\sqrt{Y_2(x)}}{x}\ge t\right)\\
    &=\PB\left(cx^{h-1}\sqrt{Y_2(1)}\ge t\right)\to0,
\end{align*}
because $h<1$. Thus, $\Phi_c(t)<\infty$ almost surely.

\paragraph*{Proof of (c).}
Part~(a) implies that $\frac{T_c(\varepsilon)}{a_\varepsilon}$ is tight. Since
$A^{\leftarrow}$ is regularly varying with index $\frac{1}{1-h}$,
\begin{align*}
    \varepsilon^\gamma a_\varepsilon\to0
    \qquad\text{whenever }\gamma>\frac{1}{1-h}.
\end{align*}
Therefore,
\begin{align*}
    \varepsilon^\gamma T_c(\varepsilon)
    =\left(\varepsilon^\gamma a_\varepsilon\right)
    \frac{T_c(\varepsilon)}{a_\varepsilon}\topb0.
\end{align*}

\paragraph*{Proof of (d).}
Define
\begin{align*}
    R(s)=c\frac{\sqrt{Y_2(s)}}{s},\qquad s>0.
\end{align*}
Then $R(\cdot)$ is $\mathbb F^Y$-adapted and c\`adl\`ag on
$(0,\infty)$. Since $Y_2(\cdot)$ has no negative jumps and the
denominator is continuous and strictly positive on $(0,\infty)$,
$R(\cdot)$ also has no negative jumps. For every $v,t>0$, right
continuity gives
\begin{align*}
    \{\Phi_c(v)<t\}
    =\bigcup_{q\in\mathbb Q\cap(0,t)}\{R(q)<v\}\in\mathcal F_t^Y.
\end{align*}
Thus, $\Phi_c(v)$ is an $\mathbb F^Y$-stopping time without requiring
$R$ to have a finite extension at zero. Fix $u>0$ at which
$\Phi_c(\cdot)$ is almost surely continuous, and set
\begin{align*}
    \tau=\Phi_c(u),
    \qquad
    \tau_n=\Phi_c\left(u+\frac{1}{n}\right),
    \qquad n\ge1.
\end{align*}
The stopping times $\tau_n$ are nondecreasing in $n$ and satisfy
$\tau_n\le\tau$. On $\{\tau=0\}$, this implies $\tau_n=0$ for every
$n$, so the announcing property at zero is immediate. By
Theorem~\ref{thm: stop_prop}(b), $\tau<\infty$ almost surely. On
$\{0<\tau<\infty\}$, Lemma~\ref{lem: continuity_stop} implies that
$R(\cdot)$ is continuous at $\tau$ and $R(\tau)=u$. For every $n$,
continuity at $\tau$ and $u<u+\frac{1}{n}$ imply that
$\{0<s<\tau:R(s)<u+\frac{1}{n}\}$ is nonempty. Hence,
$\tau_n<\tau$. Finally, monotonicity gives
\begin{align*}
    \tau_n\uparrow\Phi_c(u+)=\Phi_c(u)=\tau
\end{align*}
almost surely, where the second equality follows from the assumed continuity of $\Phi_c(\cdot)$ at $u$. Thus, $(\tau_n)_{n\ge1}$ announces $\tau$, proving that $\Phi_c(u)$ is predictable.

\subsubsection{Proof of Theorem~\ref{thm: asymp_valid}}
\paragraph*{Proof of the random-time limit.}
The assumed positivity of $\Phi_1(1)$, the identity
$\Phi_c(1)=\Phi_1(c^{-1})$, and Theorem~\ref{thm: stop_prop}(b)
show that $\Phi_c(1)$ is almost surely positive and finite. By
Assumption~\ref{asmp: fclt},
\begin{align}
\label{eq: valid_1}
    \left(
    a_\varepsilon^{-h}\ell(a_\varepsilon)
    \left(Z_1(a_\varepsilon\,\cdot)-\idx(a_\varepsilon\,\cdot)\mu\right),
    a_\varepsilon^{-h}\ell(a_\varepsilon)
    \sqrt{Z_2(a_\varepsilon\,\cdot)}
    \right)
    \overset{\mathrm{WM}_2}{\Rightarrow}
    \left(Y_1(\cdot),\sqrt{Y_2(\cdot)}\right).
\end{align}
Let $V_\varepsilon$ denote the second coordinate on the left-hand side
of~\eqref{eq: valid_1}.
Since $\varepsilon A(a_\varepsilon)\to1$,
for the stopping rule $T_c(\varepsilon)$ in~\eqref{eq: stop1}, define its
normalized version by
\begin{align*}
    \tau_{\varepsilon,c}
    :=\frac{T_c(\varepsilon)}{a_\varepsilon}
    =\inf\left\{s\in a_\varepsilon^{-1}\mathbb N:
    s>\frac{\varepsilon^{-\nu}}{a_\varepsilon},\quad
    \frac{s}{V_\varepsilon(s)}>
    \frac{c}{\varepsilon A(a_\varepsilon)}\right\}.
\end{align*}
Here $\frac{s}{0}:=+\infty$.
The lower bound $\frac{\varepsilon^{-\nu}}{a_\varepsilon}$ converges to zero.
As in the proof of Theorem~\ref{thm: stop_prop}(a), every deterministic
positive time is almost surely a continuity point of $Y_2$. Fix
$0<\eta<M<\infty$. Since $\varepsilon A(a_\varepsilon)\to1$,
Slutsky's theorem and the
grid version of Lemma~\ref{lem: phi_cont}, applied jointly
to~\eqref{eq: valid_1}, give
\begin{align*}
    \left(
    a_\varepsilon^{-h}\ell(a_\varepsilon)
    \left(Z_1(a_\varepsilon\,\cdot)
    -\idx(a_\varepsilon\,\cdot)\mu\right),
    V_\varepsilon,
    \mathcal I_{\eta,M}^{[a_\varepsilon^{-1}]}
    \left(\frac{V_\varepsilon}{\varepsilon A(a_\varepsilon)}\right)
    \right)
    \Rightarrow
    \left(
    Y_1,\sqrt{Y_2},
    \mathcal I_{\eta,M}\left(\sqrt{Y_2}\right)
    \right)
\end{align*}
in the product of the two $M_2$ topologies and the $M_1$ topology for
the inverse process. The assumed continuity of $\Phi_c(\cdot)$ at $1$
implies that $y\mapsto\Phi_c\left(\frac{c}{y}\right)$ is almost surely continuous at $c$.
Define
\begin{align*}
    \tau_{\varepsilon,c}^{\eta,M}
    :=\mathcal I_{\eta,M}^{[a_\varepsilon^{-1}]}
    \left(\frac{V_\varepsilon}{\varepsilon A(a_\varepsilon)}\right)(c).
\end{align*}
On the event
\begin{align*}
    G_{\eta,M}:=\{\eta<\Phi_c(1)<M\},
\end{align*}
the continuity of $\Phi_c\left(\frac{c}{\cdot}\right)$ at $c$ implies that
$\mathcal I_{\eta,M}\left(\sqrt{Y_2}\right)$ is continuous at $c$, with
\begin{align*}
    \mathcal I_{\eta,M}\left(\sqrt{Y_2}\right)(c)=\Phi_c(1).
\end{align*}
For every bounded Lipschitz function $f$ on the product space,
\begin{align}
&\limsup_{\varepsilon\downarrow0}
\left|\EB f\left(
a_\varepsilon^{-h}\ell(a_\varepsilon)
\left(Z_1(a_\varepsilon\,\cdot)
-\idx(a_\varepsilon\,\cdot)\mu\right),V_\varepsilon,
\tau_{\varepsilon,c}^{\eta,M}\right)
-\EB f\left(Y_1,\sqrt{Y_2},\Phi_c(1)\right)\right|\notag\\
&\quad\le2\lVert f\rVert_\infty\PB(G_{\eta,M}^c)\notag\\
&\quad\le2\lVert f\rVert_\infty
\left(\PB\bigl(\Phi_c(1)\le\eta\bigr)
+\PB\bigl(\Phi_c(1)\ge M\bigr)\right).
\label{eq: valid_localized_bl}
\end{align}
The two terms on the right converge to zero as $\eta\downarrow0$ and
$M\to\infty$, respectively, because $\Phi_c(1)$ is almost surely positive
and finite.
We now control the localization error. Since the discrete infimum is
attained, for every fixed $\eta>0$ and all sufficiently small
$\varepsilon$,
\begin{align*}
    \PB\left(\tau_{\varepsilon,c}\le\eta\right)
    &\le\PB\left(
    \inf_{\substack{n\in\mathbb N:\,
    \varepsilon^{-\nu}\le n\le\eta a_\varepsilon}}
    \frac{\sqrt{Z_2(n)}}{n}
    \le\frac{\varepsilon}{c}
    \right).
\end{align*}
Applying Assumption~\ref{asmp: early_stopping} gives
\begin{align}
    \lim_{\eta\downarrow0}\limsup_{\varepsilon\downarrow0}
    \PB\left(\tau_{\varepsilon,c}\le\eta\right)=0.
    \label{eq: valid_lower_localization}
\end{align}
For the upper tail, let
$M_\varepsilon:=a_\varepsilon^{-1}\lfloor Ma_\varepsilon\rfloor$.
Then $M_\varepsilon\to M$ and
$M_\varepsilon\in a_\varepsilon^{-1}\mathbb N$. For all
sufficiently small $\varepsilon$, the lower bound
$\frac{\varepsilon^{-\nu}}{a_\varepsilon}$ is smaller than $M_\varepsilon$,
and
\begin{align*}
    \{\tau_{\varepsilon,c}>M\}
    \subseteq
    \left\{V_\varepsilon(M_\varepsilon)
    \ge\frac{\varepsilon A(a_\varepsilon)M_\varepsilon}{c}\right\}.
\end{align*}
Because $\varepsilon A(a_\varepsilon)\to1$ and
$V_\varepsilon(M_\varepsilon)\tod\sqrt{Y_2(M)}$,
\begin{align}
    \limsup_{\varepsilon\downarrow0}
    \PB\left(\tau_{\varepsilon,c}>M\right)
    &\le\PB\left(\sqrt{Y_2(M)}\ge\frac{M}{2c}\right)\notag\\
    &=\PB\left(\sqrt{Y_2(1)}\ge
    \frac{M^{1-h}}{2c}\right)\to0
    \qquad\text{as }M\to\infty.
    \label{eq: valid_upper_localization}
\end{align}
For small $\varepsilon$, on
$\{\eta<\tau_{\varepsilon,c}\le M\}$,
\begin{align*}
    \tau_{\varepsilon,c}
    =\tau_{\varepsilon,c}^{\eta,M}.
\end{align*}
Hence, for the same bounded Lipschitz function $f$,
the absolute difference between the corresponding expectations is bounded by
\begin{align*}
    2\lVert f\rVert_\infty\left(
    \PB(\tau_{\varepsilon,c}\le\eta)
    +\PB(\tau_{\varepsilon,c}>M)\right).
\end{align*}
Combining this bound with~\eqref{eq: valid_localized_bl}, then using
\eqref{eq: valid_lower_localization},
\eqref{eq: valid_upper_localization}, and the preceding bound on
$\PB(G_{\eta,M}^c)$, and finally letting $\eta\downarrow0$ and
$M\to\infty$, proves
\begin{align}
\label{eq: valid_2}
    \left(
    a_\varepsilon^{-h}\ell(a_\varepsilon)
    \left(Z_1(a_\varepsilon\,\cdot)
    -\idx(a_\varepsilon\,\cdot)\mu\right),
    V_\varepsilon(\cdot),\tau_{\varepsilon,c}\right)
    \Rightarrow
    \left(Y_1(\cdot),\sqrt{Y_2(\cdot)},\Phi_c(1)\right)
\end{align}
in the product topology. To justify evaluation at the limiting stopping
time, Theorem~\ref{thm: stop_prop}(d) shows that $\Phi_c(1)$ is an
$\mathbb F^Y$-predictable stopping time, so
Assumption~\ref{asmp: fclt}(d) gives
$\Delta Y_1(\Phi_c(1))=0$ almost surely. Since $\Phi_c(1)>0$ almost
surely, Assumption~\ref{asmp: fclt}(e) and
Lemma~\ref{lem: continuity_stop} also show that $Y_2$ is almost surely
continuous at $\Phi_c(1)$. Thus, random-time evaluation is continuous
under $M_2$, and~\eqref{eq: valid_2} gives
\begin{align*}
    \left(
    a_\varepsilon^{-h}\ell(a_\varepsilon)
    \left(Z_1(a_\varepsilon\tau_{\varepsilon,c})
    -a_\varepsilon\tau_{\varepsilon,c}\mu\right),
    V_\varepsilon(\tau_{\varepsilon,c})
    \right)
    \Rightarrow
    \left(
    Y_1(\Phi_c(1)),
    \sqrt{Y_2(\Phi_c(1))}
    \right).
\end{align*}
Since the limiting denominator is positive almost surely,
\begin{align*}
    \PB\left(Z_2(T_c(\varepsilon))=0\right)\to0.
\end{align*}
Since
$a_\varepsilon\tau_{\varepsilon,c}=T_c(\varepsilon)$ and
$\idx$ is the identity map, self-normalization, with the zero-denominator
convention in Theorem~\ref{thm: asymp_valid}, yields
\begin{align}
\label{eq: valid_3}
    \frac{Z_1(T_c(\varepsilon))-T_c(\varepsilon)\mu}
    {\sqrt{Z_2(T_c(\varepsilon))}}
    \tod
    \frac{Y_1(\Phi_c(1))}{\sqrt{Y_2(\Phi_c(1))}}.
\end{align}

\paragraph*{Proof of invariance in $c$.} Let
\begin{align}
    W^\star(u)
    :=\frac{Y_1(\Phi_1(u))}{\sqrt{Y_2(\Phi_1(u))}},
    \qquad u>0,
\end{align}
where $\Phi_1$ denotes $\Phi_c$ with $c=1$. By the joint self-similarity in Assumption~\ref{asmp: fclt}(c), for any $a>0$,
\begin{align}
    \left(Y_1(\cdot),\sqrt{Y_2(\cdot)}\right)
    \overset{d}{=}
    \left(a^{-h}Y_1(a\,\cdot),a^{-h}\sqrt{Y_2(a\,\cdot)}\right).
\end{align}
Since the stopping functional is measurable, the preceding equality in
distribution gives
\begin{align}
    \left(Y_1(\cdot),\sqrt{Y_2(\cdot)},\Phi_1(1)\right)
    &\overset{d}{=}
    \left(a^{-h}Y_1(a\,\cdot),a^{-h}\sqrt{Y_2(a\,\cdot)},
    \inf\left\{s>0:a^{-h}\frac{\sqrt{Y_2(as)}}{s}<1\right\}\right)\\
    &=
    \left(a^{-h}Y_1(a\,\cdot),a^{-h}\sqrt{Y_2(a\,\cdot)},
    \frac{1}{a}\Phi_1(a^{h-1})\right).
\end{align}
Applying the random time change and self-normalization gives
\begin{align}
    W^\star(1)\overset{d}{=}W^\star(a^{h-1}).
\end{align}
Since $a>0$ is arbitrary and $h\ne1$, $W^\star(u)$ has the same distribution for every $u>0$. Finally,
\begin{align*}
    \Phi_c(1)=\Phi_1(c^{-1})
    \quad\text{and}\quad
    W_c=W^\star(c^{-1}),
\end{align*}
so the distribution of $W_c$ does not depend on $c$.

\paragraph*{Proof of asymptotic validity.} Set
\begin{align*}
    S_\varepsilon
    =\frac{Z_1(T_{c^*}(\varepsilon))-T_{c^*}(\varepsilon)\mu}
    {\sqrt{Z_2(T_{c^*}(\varepsilon))}}.
\end{align*}
By~\eqref{eq: valid_3}, $S_\varepsilon$ converges weakly to a random variable with distribution function $F$. Moreover,
on $\{Z_2(T_{c^*}(\varepsilon))>0\}$,
\begin{align*}
    \left\{\mu\in\text{CI}(\varepsilon)\right\}
    =\left\{c_l\le S_\varepsilon\le c_u\right\}.
\end{align*}
Consequently,
\begin{align*}
\left|\PB\left(\mu\in\text{CI}(\varepsilon)\right)
-\PB\left(c_l\le S_\varepsilon\le c_u\right)\right|
&\le\PB\left(Z_2(T_{c^*}(\varepsilon))=0\right)\to0.
\end{align*}
Continuity of $F$ at $c_l$ and $c_u$ therefore gives
\begin{align*}
    \PB\left(\mu\in\text{CI}(\varepsilon)\right)
    \to F(c_u)-F(c_l)=1-\delta.
\end{align*}

\subsubsection{Proof of Theorem~\ref{thm: consistency}}
We continue the global and local simulation streams after
Algorithm~\ref{alg: general} terminates so that the calibration statistics
below are defined for every $n$. This proof-only continuation does not alter
the algorithm. All suprema indexed by $n$ below are taken over
$n\in\mathbb N$. According to Algorithm~\ref{alg: general}, for $j\ge1$,
the stopping time of the $j$th local procedure can be written as
\begin{align*}
    T^{(j)}(\varepsilon^r)
    =\inf\left\{m\in\mathbb N:
    m>\varepsilon^{-r\nu},\quad
    \frac{\sqrt{Z_2^{(j)}(m)}}{m}<\varepsilon^r
    \right\}.
\end{align*}
Define
\begin{align*}
    B_n(\varepsilon)
    :=\max\left\{b\in\mathbb N_0:
    \sum_{j=1}^bT^{(j)}(\varepsilon^r)\le n\right\},
\end{align*}
where the empty sum is zero. Thus, $B_n(\varepsilon)$ is the number of
local procedures completed by global time $n$.
When $B_n(\varepsilon)\ge1$, define the empirical distribution, its
quantiles, and $c_n(\varepsilon)$ by the same formulas as in
Algorithm~\ref{alg: general}.
As a proof convention, when $B_n(\varepsilon)=0$, we set
\begin{align*}
    \widehat F_n(x;\varepsilon)&:=\mathbbm{1}\{x\ge0\},
    &c_{l,n}(\varepsilon)=c_{u,n}(\varepsilon)&:=0,
    &c_n(\varepsilon)&:=\underline c.
\end{align*}
The global stopping test is inactive until $B_n(\varepsilon)\ge1$.
Consequently, the rule implemented by Algorithm~\ref{alg: general} is
\begin{align*}
    T(\varepsilon)
    :=\inf\left\{n\in\mathbb N:
    n>\varepsilon^{-\nu},\quad
    B_n(\varepsilon)\ge1,\quad
    c_n(\varepsilon)\frac{\sqrt{Z_2(n)}}{n}<\varepsilon
    \right\}.
\end{align*}
We first show that the online calibration has stabilized before the main procedure can stop.

\paragraph*{Number of completed local procedures.}
The natural scale of a local stopping time is $a_{\varepsilon^r}$.
Since $A^{\leftarrow}$ is regularly varying with index $\frac{1}{1-h}$ and
$r<1$,
\begin{align}
    \frac{a_{\varepsilon^r}}{a_\varepsilon}\to0
    \qquad\text{as }\varepsilon\downarrow0.
    \label{eq: local_global_scale}
\end{align}
For every fixed $N\ge1$, Theorem~\ref{thm: stop_prop}(a), the assumed
almost-sure continuity of $\Phi_1(\cdot)$ at $1$, and
\eqref{eq: local_global_scale} give
\begin{align*}
    \frac{1}{a_\varepsilon}
    \sum_{j=1}^N T^{(j)}(\varepsilon^r)
    \topb0
    \qquad\text{as }\varepsilon\downarrow0.
\end{align*}
Consequently, for every $\eta>0$,
\begin{align*}
    \PB\left(B_{\lfloor\eta a_\varepsilon\rfloor}(\varepsilon)<N\right)
    &=
    \PB\left(\sum_{j=1}^NT^{(j)}(\varepsilon^r)
    >\lfloor\eta a_\varepsilon\rfloor\right)
    \to0
    \qquad\text{as }\varepsilon\downarrow0.
\end{align*}
Since $N$ is arbitrary, for every $\eta>0$,
\begin{align}
    B_{\lfloor\eta a_\varepsilon\rfloor}(\varepsilon)
    \topb+\infty
    \qquad\text{as }\varepsilon\downarrow0.
    \label{eq: local_count_uniform}
\end{align}

\paragraph*{Oracle empirical distribution.}
Define the oracle terminal statistics
\begin{align*}
    \overline\omega_{j,\varepsilon}
    :=\begin{cases}
    \displaystyle
    \frac{Z_1^{(j)}(T^{(j)}(\varepsilon^r))
    -T^{(j)}(\varepsilon^r)\mu}
    {\sqrt{Z_2^{(j)}(T^{(j)}(\varepsilon^r))}},
    &Z_2^{(j)}(T^{(j)}(\varepsilon^r))>0,\\[8pt]
    0,&Z_2^{(j)}(T^{(j)}(\varepsilon^r))=0,
    \end{cases}
\end{align*}
and let $F_\varepsilon$ be their common distribution function. The local streams are independent, so the variables $\overline\omega_{j,\varepsilon}$ are i.i.d. for each $\varepsilon$. Theorem~\ref{thm: asymp_valid} gives
\begin{align*}
    \overline\omega_{1,\varepsilon}\tod W_1.
\end{align*}
Because $F$ is continuous, P\'olya's theorem yields
\begin{align}
    \sup_{x\in\RB}|F_\varepsilon(x)-F(x)|\to0.
    \label{eq: local_polya}
\end{align}
For $m\ge1$, define the oracle empirical distribution by
\begin{align*}
    G_{m,\varepsilon}(x)
    :=\frac{1}{m}\sum_{j=1}^m
    \mathbbm{1}\{\overline\omega_{j,\varepsilon}\le x\}.
\end{align*}
The Dvoretzky--Kiefer--Wolfowitz inequality and a union bound
imply that, for every $z>0$ and $N\ge1$,
\begin{align}
    \PB\left(\sup_{m\ge N}\sup_{x\in\RB}
    |G_{m,\varepsilon}(x)-F_\varepsilon(x)|>z\right)
    &\le\sum_{m=N}^{\infty}2\exp(-2mz^2).
    \label{eq: sequential_dkw}
\end{align}
The bound is uniform in $\varepsilon$ and converges to zero as $N\to\infty$.

\paragraph*{Replacement of $\mu$ by the global estimator.}
For $n\ge1$ and every $j\le B_n(\varepsilon)$,
Algorithm~\ref{alg: general} uses
\begin{align}
    \omega_{j,n}(\varepsilon)
    &=\overline\omega_{j,\varepsilon}
    -R_{j,\varepsilon}
    \frac{\frac{Z_1(n)}{n}-\mu}{\varepsilon^r},
    \label{eq: online_oracle_difference}\\
    R_{j,\varepsilon}
    &:=\begin{cases}
    \displaystyle
    \frac{\varepsilon^rT^{(j)}(\varepsilon^r)}
    {\sqrt{Z_2^{(j)}(T^{(j)}(\varepsilon^r))}},
    &Z_2^{(j)}(T^{(j)}(\varepsilon^r))>0,\\[8pt]
    0,&Z_2^{(j)}(T^{(j)}(\varepsilon^r))=0.
    \end{cases}
\end{align}
Fix $0<\eta<M<\infty$ and set
\begin{align*}
    D_\varepsilon(\eta,M)
    :=\frac{1}{\varepsilon^r}
    \sup_{\eta a_\varepsilon\le n\le Ma_\varepsilon}
    \left|\frac{Z_1(n)}{n}-\mu\right|.
\end{align*}
For $z>0$ and $n\in[\eta a_\varepsilon,Ma_\varepsilon]$ such that
$B_n(\varepsilon)\ge1$, let
\begin{align*}
    \Gamma_{n,\varepsilon}(z)
    :=\frac{1}{B_n(\varepsilon)}
    \sum_{j=1}^{B_n(\varepsilon)}
    \mathbbm{1}\left\{
    |\omega_{j,n}(\varepsilon)-\overline\omega_{j,\varepsilon}|>z
    \right\}.
\end{align*}
On the event
$\{B_{\lfloor\eta a_\varepsilon\rfloor}(\varepsilon)\ge1\}$,
the indicator inequalities give, for every
$n\in[\eta a_\varepsilon,Ma_\varepsilon]$ and $x\in\RB$,
\begin{align*}
    G_{B_n(\varepsilon),\varepsilon}(x-z)
    -\Gamma_{n,\varepsilon}(z)
    \le \widehat F_n(x;\varepsilon)
    \le G_{B_n(\varepsilon),\varepsilon}(x+z)
    +\Gamma_{n,\varepsilon}(z).
\end{align*}
On the same event, it follows that
\begin{align}
    &\sup_{\eta a_\varepsilon\le n\le Ma_\varepsilon}
    \sup_{x\in\RB}
    |\widehat F_n(x;\varepsilon)-F(x)|\notag\\
    &\quad\le
    \underbrace{
    \sup_{\eta a_\varepsilon\le n\le Ma_\varepsilon}
    \sup_{x\in\RB}
    \left|G_{B_n(\varepsilon),\varepsilon}(x)-F_\varepsilon(x)\right|
    }_{\text{oracle empirical error}}
    +\underbrace{
    \sup_{x\in\RB}|F_\varepsilon(x)-F(x)|
    }_{\text{local limiting error}}\notag\\
    &\qquad+
    \underbrace{
    \sup_{\eta a_\varepsilon\le n\le Ma_\varepsilon}
    \Gamma_{n,\varepsilon}(z)
    }_{\text{replacement error}}
    +\underbrace{
    \sup_{x\in\RB}\bigl(F(x+z)-F(x-z)\bigr)
    }_{\text{continuity error}}.
    \label{eq: replacement_decomposition}
\end{align}
We now control these four terms separately.
First, for every $\rho>0$ and $N\ge1$,~\eqref{eq: sequential_dkw} and the monotonicity of $B_n(\varepsilon)$ give
\begin{align*}
    &\PB\left(
    B_{\lfloor\eta a_\varepsilon\rfloor}(\varepsilon)\ge1,
    \sup_{\eta a_\varepsilon\le n\le Ma_\varepsilon}
    \sup_{x\in\RB}
    \left|G_{B_n(\varepsilon),\varepsilon}(x)-F_\varepsilon(x)\right|>\rho
    \right)\\
    &\le\PB\left(B_{\lfloor\eta a_\varepsilon\rfloor}(\varepsilon)<N\right)
    +\sum_{m=N}^{\infty}2\exp(-2m\rho^2).
\end{align*}
The first probability converges to zero by~\eqref{eq: local_count_uniform}. Taking first $\varepsilon\downarrow0$ and then $N\to\infty$ gives
\begin{align}
    \PB\left(
    B_{\lfloor\eta a_\varepsilon\rfloor}(\varepsilon)\ge1,
    \sup_{\eta a_\varepsilon\le n\le Ma_\varepsilon}
    \sup_{x\in\RB}
    \left|G_{B_n(\varepsilon),\varepsilon}(x)-F_\varepsilon(x)\right|>\rho
    \right)&\to0.
    \label{eq: oracle_empirical_uniform}
\end{align}
Second,~\eqref{eq: local_polya} gives
\begin{align}
    \sup_{x\in\RB}|F_\varepsilon(x)-F(x)|
    &\to0.
    \label{eq: local_limiting_uniform}
\end{align}
Thus, for every $\rho>0$, the local limiting error is smaller than $\rho$ for all sufficiently small $\varepsilon$.
Third, fix $z>0$. We show that the replacement error in~\eqref{eq: replacement_decomposition} is $o_\PB(1)$. Applying the joint convergence in~\eqref{eq: valid_2} and the random-time change to $Z_2$, with $\varepsilon$ replaced by $\varepsilon^r$ and $c=1$, gives
\begin{align*}
    R_{1,\varepsilon}\tod
    \frac{\Phi_1(1)}{\sqrt{Y_2(\Phi_1(1))}},
\end{align*}
which implies that $R_{1,\varepsilon}$ is asymptotically tight. For
$m\ge1$, let
\begin{align*}
    H_{m,\varepsilon}(K)
    :=\frac{1}{m}\sum_{j=1}^m
    \mathbbm{1}\{R_{j,\varepsilon}>K\}.
\end{align*}
Assumption~\ref{asmp: fclt} also gives
\begin{align*}
    \sup_{\eta a_\varepsilon\le n\le Ma_\varepsilon}
    |Z_1(n)-n\mu|
    =\OM_{\PB}\left(\frac{a_\varepsilon^{h}}{\ell(a_\varepsilon)}\right).
\end{align*}
Since $\varepsilon a_\varepsilon^{1-h}\ell(a_\varepsilon)\to1$, it follows that
\begin{align}
    D_\varepsilon(\eta,M)
    &=\OM_{\PB}\left(
    \frac{\varepsilon^{-r}}
    {a_\varepsilon^{1-h}\ell(a_\varepsilon)}
    \right)
    =\OM_{\PB}(\varepsilon^{1-r})
    =o_\PB(1).
    \label{eq: global_centering_uniform}
\end{align}
By~\eqref{eq: online_oracle_difference}, on the event
$\{B_{\lfloor\eta a_\varepsilon\rfloor}(\varepsilon)\ge1,
K D_\varepsilon(\eta,M)\le z\}$, we have
\begin{align*}
    \sup_{\eta a_\varepsilon\le n\le Ma_\varepsilon}
    \Gamma_{n,\varepsilon}(z)
    \le
    \sup_{\eta a_\varepsilon\le n\le Ma_\varepsilon}
    H_{B_n(\varepsilon),\varepsilon}(K).
\end{align*}
Fix $\rho>0$. By the tightness of $R_{1,\varepsilon}$, choose $K<\infty$ such that $\PB(R_{1,\varepsilon}>K)\le\frac{\rho}{2}$ for all sufficiently small $\varepsilon$. For every $N\ge1$ and all sufficiently small $\varepsilon$,
\begin{align*}
&\PB\left(
    B_{\lfloor\eta a_\varepsilon\rfloor}(\varepsilon)\ge1,
    \sup_{\eta a_\varepsilon\le n\le Ma_\varepsilon}
    \Gamma_{n,\varepsilon}(z)>\rho
    \right)\\
&\le\PB\bigl(KD_\varepsilon(\eta,M)>z\bigr)
    +\PB\left(
    B_{\lfloor\eta a_\varepsilon\rfloor}(\varepsilon)\ge1,
    \sup_{\eta a_\varepsilon\le n\le Ma_\varepsilon}
    H_{B_n(\varepsilon),\varepsilon}(K)>\rho
    \right)\\
&\le\PB\bigl(KD_\varepsilon(\eta,M)>z\bigr)
    +\PB\left(B_{\lfloor\eta a_\varepsilon\rfloor}(\varepsilon)<N\right)
    +\PB\left(\sup_{m\ge N}H_{m,\varepsilon}(K)>\rho\right)\\
&\le\PB\bigl(KD_\varepsilon(\eta,M)>z\bigr)
    +\PB\left(B_{\lfloor\eta a_\varepsilon\rfloor}(\varepsilon)<N\right)
    +\sum_{m=N}^{\infty}2\exp\left(-\frac{m\rho^2}{2}\right).
\end{align*}
The last inequality follows from Hoeffding's inequality and a union bound.
The first two probabilities converge to zero by~\eqref{eq: global_centering_uniform} and~\eqref{eq: local_count_uniform}, respectively. Taking first $\varepsilon\downarrow0$ and then $N\to\infty$ proves that, for every fixed $z>0$,
\begin{align}
    \PB\left(
    B_{\lfloor\eta a_\varepsilon\rfloor}(\varepsilon)\ge1,
    \sup_{\eta a_\varepsilon\le n\le Ma_\varepsilon}
    \Gamma_{n,\varepsilon}(z)>\rho
    \right)&\to0.
    \label{eq: replacement_error_uniform}
\end{align}
Fourth, because a continuous distribution function is uniformly continuous, for every $\rho>0$ we can choose
$z=z_\rho>0$ such that
\begin{align*}
    \sup_{x\in\RB}\bigl(F(x+z)-F(x-z)\bigr)<\rho.
\end{align*}
Fix this choice of $z$ and take $\varepsilon$ sufficiently small that the local limiting error in~\eqref{eq: local_limiting_uniform} is less than $\rho$. Then~\eqref{eq: replacement_decomposition} gives
\begin{align*}
&\PB\left(
    \sup_{\eta a_\varepsilon\le n\le Ma_\varepsilon}
    \sup_{x\in\RB}
    |\widehat F_n(x;\varepsilon)-F(x)|>4\rho
    \right)\\
\le&\PB\left(B_{\lfloor\eta a_\varepsilon\rfloor}(\varepsilon)=0\right)\\
+&\PB\left(
    B_{\lfloor\eta a_\varepsilon\rfloor}(\varepsilon)\ge1,
    \sup_{\eta a_\varepsilon\le n\le Ma_\varepsilon}
    \sup_{x\in\RB}
    \left|G_{B_n(\varepsilon),\varepsilon}(x)-F_\varepsilon(x)\right|>\rho
    \right)\\
+&\PB\left(
    B_{\lfloor\eta a_\varepsilon\rfloor}(\varepsilon)\ge1,
    \sup_{\eta a_\varepsilon\le n\le Ma_\varepsilon}
    \Gamma_{n,\varepsilon}(z)>\rho
    \right).
\end{align*}
The three probabilities on the right converge to zero by
\eqref{eq: local_count_uniform}, \eqref{eq: oracle_empirical_uniform},
and~\eqref{eq: replacement_error_uniform}, respectively. Since $\rho>0$
is arbitrary, this proves
\begin{align}
    \sup_{\eta a_\varepsilon\le n\le Ma_\varepsilon}
    \sup_{x\in\RB}
    |\widehat F_n(x;\varepsilon)-F(x)|
    &\topb0.
    \label{eq: online_cdf_uniform}
\end{align}
By~\eqref{eq: online_cdf_uniform} and conditions~(c) and~(e), for every
$0<\eta<M<\infty$,
\begin{align}
    \sup_{\eta a_\varepsilon\le n\le Ma_\varepsilon}
    \left(
    |c_{l,n}(\varepsilon)-c_l|
    +|c_{u,n}(\varepsilon)-c_u|
    +|c_n(\varepsilon)-c^*|
    \right)\topb0.
    \label{eq: calibration_uniform}
\end{align}

\paragraph*{Online stopping time.}
Fix $0<\zeta<c^*-\underline c$. Since
$c_n(\varepsilon)\ge\underline c$, we have
\begin{align*}
    T_{\underline c}(\varepsilon)\le T(\varepsilon).
\end{align*}
Applying Assumption~\ref{asmp: early_stopping} with
$K=\underline c^{-1}$ and Theorem~\ref{thm: stop_prop}(a) with
coefficient $c^*+\zeta$, we have
\begin{align}
    \lim_{\eta\downarrow0}\limsup_{\varepsilon\downarrow0}
    \PB\left(T_{\underline c}(\varepsilon)<\eta a_\varepsilon\right)
    &=0,
    &
    \lim_{M\uparrow\infty}\limsup_{\varepsilon\downarrow0}
    \PB\left(T_{c^*+\zeta}(\varepsilon)>Ma_\varepsilon\right)
    &=0.
    \label{eq: fixed_stop_localization}
\end{align}
Define
\begin{align*}
    E_{\varepsilon,\eta,M}:=\left\{T_{\underline c}(\varepsilon)\ge\eta a_\varepsilon,
    \quad T_{c^*+\zeta}(\varepsilon)\le Ma_\varepsilon,\quad
    \sup_{\eta a_\varepsilon\le n\le Ma_\varepsilon}
    |c_n(\varepsilon)-c^*|\le\zeta\right\}.
\end{align*}
On $E_{\varepsilon,\eta,M}$, monotonicity gives
$T_{\underline c}(\varepsilon)\le T_{c^*+\zeta}(\varepsilon)$,
so
$T_{c^*+\zeta}(\varepsilon)\in[\eta a_\varepsilon,Ma_\varepsilon]$.
For every $n\in[\eta a_\varepsilon,Ma_\varepsilon]$, we also have
$B_n(\varepsilon)\ge1$ and $c_n(\varepsilon)\le c^*+\zeta$, which gives
$T(\varepsilon)\le T_{c^*+\zeta}(\varepsilon)$. Together with the
pathwise lower bound, this gives
\begin{align*}
    \eta a_\varepsilon
    \le T_{\underline c}(\varepsilon)
    \le T(\varepsilon)
    \le T_{c^*+\zeta}(\varepsilon)
    \le Ma_\varepsilon.
\end{align*}
Since $c_{T(\varepsilon)}(\varepsilon)\ge c^*-\zeta$, we have
\begin{align}
    T_{c^*-\zeta}(\varepsilon)
    \le T(\varepsilon)
    \le T_{c^*+\zeta}(\varepsilon).
    \label{eq: online_sandwich}
\end{align}
By~\eqref{eq: fixed_stop_localization} and
\eqref{eq: calibration_uniform}, for every $\kappa>0$, we can choose
$0<\eta<M<\infty$ such that
\begin{align*}
    \liminf_{\varepsilon\downarrow0}
    \PB(E_{\varepsilon,\eta,M})\ge1-\kappa.
\end{align*}
Hence,~\eqref{eq: online_sandwich} holds with probability tending to one.
In particular, $\PB(T(\varepsilon)<\infty)\to1$, so the fixed-value
convention on nontermination is asymptotically irrelevant.
Monotonicity also gives
\begin{align*}
    T_{c^*-\zeta}(\varepsilon)
    \le T_{c^*}(\varepsilon)
    \le T_{c^*+\zeta}(\varepsilon).
\end{align*}
Consequently, on the event that~\eqref{eq: online_sandwich} holds,
\begin{align*}
    \frac{|T(\varepsilon)-T_{c^*}(\varepsilon)|}{a_\varepsilon}
    \le
    \frac{T_{c^*+\zeta}(\varepsilon)-T_{c^*-\zeta}(\varepsilon)}
    {a_\varepsilon}.
\end{align*}
Because each sample path of $\Phi_1(\cdot)$ is monotone, Fubini's theorem shows that, for Lebesgue-almost every $u>0$, it is almost surely continuous at $u$. We may therefore let $\zeta\downarrow0$ through values for which $\Phi_1(\cdot)$ is almost surely continuous at both $(c^*-\zeta)^{-1}$ and $(c^*+\zeta)^{-1}$. At each such $\zeta$, the continuous-mapping argument in the proof of Theorem~\ref{thm: stop_prop}(a) yields the joint convergence
\begin{align*}
    \left(
    \frac{T_{c^*-\zeta}(\varepsilon)}{a_\varepsilon},
    \frac{T_{c^*+\zeta}(\varepsilon)}{a_\varepsilon}
    \right)
    \Rightarrow
    \left(
    \Phi_1((c^*-\zeta)^{-1}),
    \Phi_1((c^*+\zeta)^{-1})
    \right).
\end{align*}
Since $\Phi_1(\cdot)$ is almost surely continuous at $(c^*)^{-1}$, both coordinates converge almost surely to $\Phi_1((c^*)^{-1})$ as $\zeta\downarrow0$. Letting first $\varepsilon\downarrow0$ and then $\zeta\downarrow0$, the converging-together theorem gives
\begin{align*}
    \frac{T(\varepsilon)-T_{c^*}(\varepsilon)}{a_\varepsilon}
    \topb0.
\end{align*}
Combining this equivalence with Theorem~\ref{thm: stop_prop}(a) and the assumed continuity of $\Phi_1(\cdot)$ at $(c^*)^{-1}$ gives
\begin{align}
    \frac{T(\varepsilon)}{a_\varepsilon}
    \tod\Phi_{c^*}(1).
    \label{eq: online_stop_limit}
\end{align}
Moreover,~\eqref{eq: valid_2} with $c=c^*$ and Slutsky's theorem give the joint convergence
\begin{align*}
    \left(
    a_\varepsilon^{-h}\ell(a_\varepsilon)
    \left(Z_1(a_\varepsilon\,\cdot)
    -\idx(a_\varepsilon\,\cdot)\mu\right),
    V_\varepsilon(\cdot),
    \frac{T(\varepsilon)}{a_\varepsilon}
    \right)
    \Rightarrow
    \left(
    Y_1(\cdot),\sqrt{Y_2(\cdot)},\Phi_{c^*}(1)
    \right).
\end{align*}
As shown in the proof of Theorem~\ref{thm: asymp_valid}, $\Phi_{c^*}(1)$ is almost surely a continuity point of both limiting processes. Random-time evaluation therefore yields
\begin{align}
    \frac{Z_1(T(\varepsilon))-T(\varepsilon)\mu}
    {\sqrt{Z_2(T(\varepsilon))}}
    \tod
    \frac{Y_1(\Phi_{c^*}(1))}
    {\sqrt{Y_2(\Phi_{c^*}(1))}}.
    \label{eq: online_terminal_limit}
\end{align}
The same joint convergence and the positivity of the limiting denominator
give
\begin{align*}
    \PB\left(Z_2(T(\varepsilon))=0\right)\to0.
\end{align*}
By the invariance established in Theorem~\ref{thm: asymp_valid}, the limit in~\eqref{eq: online_terminal_limit} has distribution function $F$. In addition,~\eqref{eq: online_stop_limit} implies that $\frac{T(\varepsilon)}{a_\varepsilon}$ is tight and bounded away from zero in probability. Thus, for every $\rho>0$ and $0<\eta<M<\infty$,
\begin{align*}
    &\PB\left(
    |c_{l,T(\varepsilon)}(\varepsilon)-c_l|
    +|c_{u,T(\varepsilon)}(\varepsilon)-c_u|>\rho
    \right)\\
    \le&\PB\left(\frac{T(\varepsilon)}{a_\varepsilon}\notin[\eta,M]\right)\\
    +&\PB\left(
    \sup_{\eta a_\varepsilon\le n\le Ma_\varepsilon}
    \left(
    |c_{l,n}(\varepsilon)-c_l|
    +|c_{u,n}(\varepsilon)-c_u|
    \right)>\rho
    \right).
\end{align*}
The second probability converges to zero by~\eqref{eq: calibration_uniform}; letting $\eta\downarrow0$ and $M\uparrow\infty$ then makes the first probability arbitrarily small. This proves~\eqref{eq: terminal_quantile_consistency}. On $\{Z_2(T(\varepsilon))>0\}$, the coverage event equals
\begin{align*}
    \left\{
    c_{l,T(\varepsilon)}(\varepsilon)
    \le
    \frac{Z_1(T(\varepsilon))-T(\varepsilon)\mu}
    {\sqrt{Z_2(T(\varepsilon))}}
    \le c_{u,T(\varepsilon)}(\varepsilon)
    \right\}.
\end{align*}
Therefore,~\eqref{eq: online_terminal_limit},
\eqref{eq: terminal_quantile_consistency}, the preceding zero-denominator
bound, and continuity of $F$ at $c_l$ and $c_u$ imply
\eqref{eq: feasible_coverage}.


\subsection{Proofs for Section~\ref{sec: alt}}
\subsubsection{Proof of Theorem~\ref{thm: iid_finite}}
\paragraph{Verification of Assumption~\ref{asmp: fclt}.}
Donsker's theorem~\cite{donsker1951invariance} and the functional law of large numbers for the sample
variance give the joint functional convergence.
The strong law gives
\begin{align*}
    \frac{Z_1(t)-\mu t}{t}\to0,
    \qquad
    \frac{Z_2(t)}{t^2}\to0
    \qquad\text{almost surely}.
\end{align*}
Thus Assumption~\ref{asmp: fclt}(b) holds.
The Brownian limit is continuous and self-similar with index $\frac{1}{2}$, while
$Y_2(t)=\sigma^2t$ is continuous, positive for $t>0$, and self-similar with
index one. This verifies all remaining parts of
Assumption~\ref{asmp: fclt}.

\paragraph{Verification of Assumption~\ref{asmp: early_stopping}.}
Fix $K>0$ and take $0<\eta<\frac{\sigma^2}{2K^2}$. Since
we may take $A^{\leftarrow}(x)=x^2$ and hence
$a_\varepsilon=\varepsilon^{-2}$, any integer
$\varepsilon^{-\nu}\le n\le\eta a_\varepsilon$ satisfying
$\frac{\sqrt{Z_2(n)}}{n}\le K\varepsilon$ also satisfies
$\frac{Z_2(n)}{n}\le K^2\eta<\frac{\sigma^2}{2}$. Therefore,
\begin{align*}
    \PB\left(\inf_{\varepsilon^{-\nu}\le n\le\eta a_\varepsilon}
    \frac{\sqrt{Z_2(n)}}{n}\le K\varepsilon\right)
    \le\PB\left(\inf_{n\ge\lceil\varepsilon^{-\nu}\rceil}
    \frac{Z_2(n)}{n}<\frac{\sigma^2}{2}\right)\to0.
\end{align*}
The convergence follows from $\frac{Z_2(n)}{n}\to\sigma^2$ almost surely.
Letting $\eta\downarrow0$ proves
Assumption~\ref{asmp: early_stopping}.

\subsubsection{Proof of Theorem~\ref{thm: iid_infinite}}
\paragraph{Verification of Assumption~\ref{asmp: fclt}.}
Let $b$ be a regularly varying norming function of index $\frac{1}{\alpha}$
associated with Assumption~\ref{asmp: domain}, and take
$\ell(t)=\frac{t^{\frac{1}{\alpha}}}{b(t)}$. Point-process convergence
\citep{resnick1986point} gives the joint functional convergence of the
centered partial-sum process and the raw squared-sum process. For every
$n\ge1$, the least-squares identity gives
\begin{align*}
    Z_2(n)=\sum_{i=1}^n(X_i-\mu)^2
    -\frac{(Z_1(n)-n\mu)^2}{n}.
\end{align*}
On compact time intervals bounded away from zero, the second term is
negligible uniformly on the $b(t)^2$ scale by tightness of the first
coordinate. Near zero,
\begin{align*}
    \max_{1\le n\le\lfloor t\eta\rfloor}
    \frac{(Z_1(n)-n\mu)^2}{n b(t)^2}
    \le\frac{1}{b(t)^2}\sum_{i=1}^{\lfloor t\eta\rfloor}(X_i-\mu)^2,
\end{align*}
and the right-hand side converges to $L_2(\eta)$, which tends to zero almost
surely as $\eta\downarrow0$. Thus sample centering does not change the joint
functional limit. The difference between centering by $\mu\lfloor
t\,\cdot\rfloor$ and by $\mu t\,\cdot$ is also negligible. Since
$\alpha>1$, the strong law gives $\frac{Z_1(t)-\mu t}{t}\to0$ almost surely. The
least-squares property and Lemma~\ref{lem:subquadratic_squares} give
\begin{align*}
    0\le\frac{Z_2(n)}{n^2}
    \le\frac{1}{n^2}\sum_{i=1}^n(X_i-\mu)^2\to0
    \qquad\text{almost surely}.
\end{align*}
Thus Assumption~\ref{asmp: fclt}(b) holds.
Finally, $(L_1,L_2)$ is a jointly self-similar L\'evy process and is
therefore quasi-left-continuous with respect to its augmented natural
filtration. The subordinator $L_2$ has no negative jumps and is almost
surely positive at every positive time. This verifies the remaining parts of
Assumption~\ref{asmp: fclt}.

\paragraph{Verification of Assumption~\ref{asmp: early_stopping}.}
Let
\begin{align*}
    \Delta_n:=\max_{1\le i\le n}X_i-\min_{1\le i\le n}X_i.
\end{align*}
By $a^2+b^2\ge\frac{(a-b)^2}{2}$,
\begin{align}
    Z_2(n)
    &\ge\left(\max_{1\le i\le n}X_i-\frac{Z_1(n)}{n}\right)^2
    +\left(\min_{1\le i\le n}X_i-\frac{Z_1(n)}{n}\right)^2
    \ge\frac{\Delta_n^2}{2}.
    \label{eq: iid_range_bound}
\end{align}
By the tail characterization of stable domains of attraction and the
convolution closure of regularly varying tails
\citep{geluk2000stable,foss2009convolutions}, we have:
\begin{align*}
    \PB(|X_1-X_2|>x)\sim2\PB(|X_1|>x),
    \qquad
    t\PB(|X_1-X_2|>b(t))\asymp1.
\end{align*}
Here $b$ satisfies $b(a_\varepsilon)\sim\varepsilon a_\varepsilon$ since
$A(t)=t^{1-\frac{1}{\alpha}}\ell(t)=\frac{t}{b(t)}$. Fix $K>0$ and
$0<\delta<\alpha-1$, and let
$m_\varepsilon=\lceil\varepsilon^{-\nu}\rceil$. Regular variation of
$A$ and $\nu<\frac{\alpha}{\alpha-1}$ give
$\frac{m_\varepsilon}{a_\varepsilon}\to0$. Fix $\eta>0$ and set
$r_j=2^jm_\varepsilon$. For all sufficiently small $\varepsilon$, let
$J_\varepsilon$ be the largest integer
such that $r_{J_\varepsilon}\le\eta a_\varepsilon$. For
$n\in[r_j,2r_j)$, if $\frac{\sqrt{Z_2(n)}}{n}\le K\varepsilon$,
\eqref{eq: iid_range_bound} and monotonicity of the sample range give
\begin{align*}
    \Delta_{r_j}\le\Delta_n\le\sqrt{2Z_2(n)}
    \le\sqrt{2}K\varepsilon n<2\sqrt{2}K\varepsilon r_j.
\end{align*}
Since the dyadic blocks cover $[m_\varepsilon,\eta a_\varepsilon]$,
the union bound gives
\begin{align}
\PB\left(\inf_{\varepsilon^{-\nu}\le n\le\eta a_\varepsilon}
    \frac{\sqrt{Z_2(n)}}{n}\le K\varepsilon\right)
&\le\sum_{j=0}^{J_\varepsilon}
    \PB\left(\inf_{r_j\le n<2r_j}
    \frac{\sqrt{Z_2(n)}}{n}\le K\varepsilon\right)
    \notag\\
&\le\sum_{j=0}^{J_\varepsilon}
    \PB\left(\Delta_{r_j}\le2\sqrt{2}K\varepsilon r_j\right)
    \notag\\
&\overset{(a)}{\le}\sum_{j=0}^{J_\varepsilon}
    \PB\left(\max_{1\le i\le\lfloor \frac{r_j}{2}\rfloor}
    |X_{2i-1}-X_{2i}|\le2\sqrt{2}K\varepsilon r_j\right)
    \notag\\
&=\sum_{j=0}^{J_\varepsilon}
    \left(1-\PB\left(|X_1-X_2|>2\sqrt{2}K\varepsilon r_j\right)\right)^{\lfloor \frac{r_j}{2}\rfloor}
    \notag\\
&\le\sum_{j=0}^{J_\varepsilon}\exp\left(-\left\lfloor\frac{r_j}{2}\right\rfloor
    \PB\left(|X_1-X_2|>2\sqrt{2}K\varepsilon r_j\right)\right).
    \label{eq: iid_early_dyadic}
\end{align}
Inequality~(a) follows since
$|X_{2i-1}-X_{2i}|\le\Delta_{r_j}$ for every
$1\le i\le\lfloor \frac{r_j}{2}\rfloor$. We next bound the sum. Choose a
sufficiently large constant $M$. By~\eqref{eq: iid_early_dyadic},
\begin{align*}
\PB\left(\inf_{\varepsilon^{-\nu}\le n\le\eta a_\varepsilon}
\frac{\sqrt{Z_2(n)}}{n}\le K\varepsilon\right)
&\le\sum_{\substack{0\le j\le J_\varepsilon:\,
\varepsilon r_j\le M}}
\exp\left(-\left\lfloor\frac{r_j}{2}\right\rfloor
\PB\left(|X_1-X_2|>2\sqrt{2}K\varepsilon r_j\right)\right)\\
&\quad+\sum_{\substack{0\le j\le J_\varepsilon:\,
\varepsilon r_j>M}}
\exp\left(-\left\lfloor\frac{r_j}{2}\right\rfloor
\PB\left(|X_1-X_2|>2\sqrt{2}K\varepsilon r_j\right)\right).
\end{align*}
For the first sum,
$\PB\left(|X_1-X_2|>2\sqrt{2}KM\right)>0$, and hence, for some $c>0$,
\begin{align*}
\sum_{\substack{0\le j\le J_\varepsilon:\,
\varepsilon r_j\le M}}
\exp\left(-\left\lfloor\frac{r_j}{2}\right\rfloor
\PB\left(|X_1-X_2|>2\sqrt{2}K\varepsilon r_j\right)\right)
&\le\sum_{j=0}^{\infty}\exp(-c2^jm_\varepsilon).
\end{align*}
The last sum converges to zero. For the second sum, Potter's bound and
$b(a_\varepsilon)\sim\varepsilon a_\varepsilon$ give constants
$\eta_0,c,C,\varepsilon_0>0$. Since only $\eta\downarrow0$ is relevant,
assume $0<\eta\le\eta_0$. Then, for every
$0<\varepsilon\le\varepsilon_0$ and
$\frac{M}{\varepsilon}<r\le\eta a_\varepsilon$,
\begin{align*}
    \left\lfloor\frac{r}{2}\right\rfloor
    \PB\left(|X_1-X_2|>2\sqrt{2}K\varepsilon r\right)
    &\ge cr\PB\left(
    |X_1-X_2|>4\sqrt{2}K\frac{r}{a_\varepsilon}b(a_\varepsilon)
    \right)\\
    &\ge C\left(\frac{r}{a_\varepsilon}\right)^{1-\alpha+\delta}.
\end{align*}
Substituting the preceding bound into the second sum gives
\begin{align*}
&\sum_{\substack{0\le j\le J_\varepsilon:\,
\varepsilon r_j>M}}
\exp\left(-\left\lfloor\frac{r_j}{2}\right\rfloor
\PB\left(|X_1-X_2|>2\sqrt{2}K\varepsilon r_j\right)\right)\\
\le&\sum_{\substack{0\le j\le J_\varepsilon:\,
\varepsilon r_j>M}}
\exp\left(-C\left(\frac{r_j}{a_\varepsilon}
\right)^{1-\alpha+\delta}\right)\\
=&\sum_{\substack{0\le k\le J_\varepsilon:\,
\varepsilon r_{J_\varepsilon-k}>M}}
\exp\left(-C\left(\frac{r_{J_\varepsilon-k}}{a_\varepsilon}
\right)^{1-\alpha+\delta}\right).
\end{align*}
Since $r_{J_\varepsilon}\le\eta a_\varepsilon$,
$\frac{r_{J_\varepsilon-k}}{a_\varepsilon}\le\eta2^{-k}$, and therefore
\begin{align*}
\sum_{\substack{0\le k\le J_\varepsilon:\,
\varepsilon r_{J_\varepsilon-k}>M}}
\exp\left(-C\left(\frac{r_{J_\varepsilon-k}}{a_\varepsilon}
\right)^{1-\alpha+\delta}\right)
&\le\sum_{k=0}^{J_\varepsilon}
\exp\left(-C\eta^{-(\alpha-1-\delta)}
2^{k(\alpha-1-\delta)}\right)\\
&\le\sum_{k=0}^{\infty}
\exp\left(-C\eta^{-(\alpha-1-\delta)}
2^{k(\alpha-1-\delta)}\right).
\end{align*}
Combining these bounds yields
\begin{align*}
    \limsup_{\varepsilon\downarrow0}
    \PB\left(
    \inf_{\substack{n\in\mathbb N:\,
    \varepsilon^{-\nu}\le n\le\eta a_\varepsilon}}
    \frac{\sqrt{Z_2(n)}}{n}\le K\varepsilon
    \right)
    \le\sum_{k=0}^{\infty}
    \exp\left(-C\eta^{-(\alpha-1-\delta)}
    2^{k(\alpha-1-\delta)}\right).
\end{align*}
Since $C$ is independent of $\eta\in(0,\eta_0]$, the series on the right
converges to zero as $\eta\downarrow0$ by the dominated convergence theorem,
proving
Assumption~\ref{asmp: early_stopping}.

\subsubsection{Proof of Theorem~\ref{thm: replication}}
Independence gives joint convergence of the $m$ coordinates to
$(Y_1^{(1)},\ldots,Y_1^{(m)})$ in the product $M_2$ topology.
Since each path has countably many jumps, quasi-left-continuity and
independence imply that no two limiting coordinates jump together almost
surely. Corollary~12.11.4 of \cite{whitt2002stochastic} and the continuous
mapping theorem therefore strengthen the joint convergence to the
vector-valued $\mathrm{SM}_2$ topology, and
Theorem~12.11.4 of \cite{whitt2002stochastic} gives
\begin{align*}
    \varepsilon^{h}\ell(\varepsilon^{-1})\left(
    Z_1^{\mathrm{sec}}\left(\frac{\cdot}{\varepsilon}\right)
    -\idx\left(\frac{\cdot}{\varepsilon}\right)\mu
    \right)
    \overset{M_2}{\Rightarrow}
    \bar Y_1(\cdot).
\end{align*}
After the change of variables $s=\frac{r}{\varepsilon}$, the square of the second coordinate can be written as
\begin{align*}
    \bigl(\varepsilon^{h}\ell(\varepsilon^{-1})\bigr)^2
    Z_2^{*,\mathrm{sec}}
    \left(\frac{t}{\varepsilon}\right)
    =\frac{1}{m-1}\sum_{i=1}^m\frac{1}{t}
    \int_0^t
    \left(
    \varepsilon^{h}\ell(\varepsilon^{-1})
    \left[
    Z_1^{(i)}\left(\frac{r}{\varepsilon}\right)
    -\bar Z_1\left(\frac{r}{\varepsilon}\right)
    \right]
    \right)^2
    \,\mathrm dr.
\end{align*}
Theorem~11.5.1 of \cite{whitt2002stochastic}, with the continuous function
\begin{align*}
    (z_1,\ldots,z_m)\mapsto
    \frac{1}{m-1}\sum_{i=1}^m
    \left(z_i-\frac{1}{m}\sum_{j=1}^m z_j\right)^2,
\end{align*}
gives locally uniform convergence of the cumulative integrals. Division by
$t$ and the square-root map then give locally uniform convergence of the
second coordinate on every compact interval bounded away from zero. To handle
zero, consider any deterministic sequence
$(x_n^{(1)},\ldots,x_n^{(m)})\to(x^{(1)},\ldots,x^{(m)})$ in
$\mathrm{SM}_2$, where $x^{(i)}(0)=0$ for $1\le i\le m$. Then
\begin{align*}
\sup_{0<t\le\delta}\frac{1}{m-1}\sum_{i=1}^m\frac{1}{t}
    \int_0^t\left(x_n^{(i)}(s)-\frac{1}{m}
    \sum_{j=1}^m x_n^{(j)}(s)\right)^2\,\mathrm ds
&\le\frac{m}{m-1}\max_{1\le i\le m}
    \sup_{0\le s\le\delta}|x_n^{(i)}(s)|^2,
\end{align*}
whose right-hand side vanishes under
$\lim_{\delta\downarrow0}\limsup_{n\to\infty}$ by completed-graph convergence
and right-continuity at zero. The same bound holds for the limit. Hence the
continuous mapping theorem proves the asserted $\mathrm{WM}_2$ convergence.

The process $Y_1^{\mathrm{sec}}$ is $h$-self-similar, while a change of variables in the defining integral shows that $Y_2^{*,\mathrm{sec}}$ is $2h$-self-similar jointly with $Y_1^{\mathrm{sec}}$. The latter process is continuous. Moreover, $Y_1^{\mathrm{sec}}$ is quasi-left-continuous with respect to the filtration generated by the limiting vector and therefore also with respect to the smaller filtration generated by $(Y_1^{\mathrm{sec}},Y_2^{*,\mathrm{sec}})$. Since each copy starts at zero, $Z_2^{*,\mathrm{sec}}$ is nonnegative and continuous, including at time zero. The strong-law condition follows by averaging the corresponding condition for the $m$ copies. Write $e_i(t)=Z_1^{(i)}(t)-\mu t$. Then $e_i(t)=o(t)$ almost surely, and c\`adl\`ag local boundedness gives $\sup_{s\le t}|e_i(s)|=o(t)$ almost surely. Since $m$ is fixed, the same bound holds for every sectioning contrast, and therefore
\begin{align*}
    \frac{Z_2^{*,\mathrm{sec}}(t)}{t^2}
    \le \frac{C}{t^3}\int_0^t
    \max_{1\le i\le m}|e_i(s)|^2\,\mathrm ds
    \to0
    \qquad\text{almost surely}.
\end{align*}
This completes the verification of Assumption~\ref{asmp: fclt}.

\subsubsection{Proof of Theorem~\ref{thm: batch_mean}}
Because $\idx$ is the identity map, the linear centering terms cancel
exactly in every batch contrast. Consequently,
\begin{align*}
    \bigl(\varepsilon^{h}\ell(\varepsilon^{-1})\bigr)^2
    Z_2^{*,\mathrm{bm}}
    \left(\frac{t}{\varepsilon}\right)
    =\frac{1}{m-1}\sum_{k=1}^m\frac{1}{t}
    \int_0^t
    \left(
    \varepsilon^{h}\ell(\varepsilon^{-1})
    \left[
    Z_{1,k}^{\mathrm{bm}}\left(\frac{r}{\varepsilon}\right)
    -\frac{1}{m}Z_1\left(\frac{r}{\varepsilon}\right)
    \right]
    \right)^2
    \,\mathrm dr.
\end{align*}
Deterministic linear time changes preserve $M_2$ convergence.
The proof of Theorem~11.5.1 of \cite{whitt2002stochastic} applies to the
resulting finite product $M_2$ convergence and the continuous quadratic
function defining the batch contrasts, giving locally uniform convergence of
the cumulative integrals.
Division by $t$ and the square-root map apply away from zero, and the same
near-zero bound as in the proof of Theorem~\ref{thm: replication} handles
zero. Together with the first-coordinate convergence, this proves the claimed
$\mathrm{WM}_2$ convergence.

A change of variables shows that $(Y_1^{\mathrm{bm}},Y_2^{*,\mathrm{bm}})$ is jointly self-similar with indices $h$ and $2h$. The second coordinate is continuous and is adapted to the natural filtration of $Y_1$. Hence the augmented natural filtration of the pair does not enlarge that of $Y_1$, and the quasi-left-continuity of $Y_1^{\mathrm{bm}}=Y_1$ is preserved. Since $Z_1(0)=0$, the prelimit time average is also nonnegative and continuous at zero. If $e(t)=Z_1(t)-\mu t$ and $M(t)=\sup_{s\le t}|e(s)|$, then $M(t)=o(t)$ almost surely. Every batch contrast is bounded by $3M(t)$, and consequently
\begin{align*}
    \frac{Z_2^{*,\mathrm{bm}}(t)}{t^2}
    \le \frac{9m}{m-1}\frac{M(t)^2}{t^2}\to0
    \qquad\text{almost surely}.
\end{align*}
This completes the verification of Assumption~\ref{asmp: fclt}.

\subsubsection{Proof of Theorem~\ref{thm: random_scaling}}
The linear centering terms cancel exactly in each bridge because $\idx$ is
the identity map. After a change of variables, the rescaled pointwise random
scaling is
\begin{align*}
    Q_\varepsilon(u)
    &:=\bigl(\varepsilon^{h}\ell(\varepsilon^{-1})\bigr)^2
    Z_2^{\mathrm{rs}}
    \left(\frac{u}{\varepsilon}\right)\\
    &=\frac{1}{u}\int_0^u
    \left(
    \varepsilon^{h}\ell(\varepsilon^{-1})
    \left[
    Z_1\left(\frac{r}{\varepsilon}\right)
    -\frac{r}{u}Z_1\left(\frac{u}{\varepsilon}\right)
    \right]
    \right)^2\,\mathrm dr.
\end{align*}
To verify the continuous-mapping step, let $x_n\to x$ in $M_2$ on
compact time intervals, with $x_n(0)=x(0)=0$, and let $Q_n$ and $Q$ be
the corresponding pointwise random-scaling paths. At every continuity
point $u$ of $x$, completed-graph convergence gives $x_n(u)\to x(u)$,
and the same convergence holds for almost every $r$. The paths are uniformly
bounded on compact intervals, so dominated convergence gives
$Q_n(u)\to Q(u)$ for almost every $u>0$ and, for every $T<\infty$,
\begin{align*}
    \int_0^T|Q_n(u)-Q(u)|\,\mathrm du\to0.
\end{align*}
A second change of variables yields
\begin{align*}
    \bigl(\varepsilon^{h}\ell(\varepsilon^{-1})\bigr)^2
    Z_2^{*,\mathrm{rs}}
    \left(\frac{t}{\varepsilon}\right)
    =\frac{1}{t}\int_0^t Q_\varepsilon(u)\,\mathrm du.
\end{align*}
For every $0<\delta<T$, the preceding $L_1$ convergence gives
\begin{align*}
&\sup_{\delta\le t\le T}
\left|\frac{1}{t}\int_0^tQ_n(u)\,\mathrm du
-\frac{1}{t}\int_0^tQ(u)\,\mathrm du\right|
\le\frac{1}{\delta}\int_0^T|Q_n(u)-Q(u)|\,\mathrm du\to0.
\end{align*}
Near zero,
\begin{align*}
\sup_{0<t\le\delta}
    \left|\frac{1}{t}\int_0^tQ_n(u)\,\mathrm du
    -\frac{1}{t}\int_0^tQ(u)\,\mathrm du\right|
&\le4\sup_{0\le r\le\delta}|x_n(r)|^2
    +4\sup_{0\le r\le\delta}|x(r)|^2.
\end{align*}
Since $x$ is right-continuous at zero, completed-graph convergence gives
\begin{align*}
    \lim_{\delta\downarrow0}\limsup_{n\to\infty}
    \sup_{0\le r\le\delta}|x_n(r)|=0.
\end{align*}
Thus the time-smoothed random-scaling map is continuous from $M_2$ to the
local uniform topology on the subspace of paths starting from zero. The
normalized prelimit paths and their limit all lie in this subspace. The
continuous mapping theorem, followed by the square-root map, proves the
asserted $\mathrm{WM}_2$ convergence.

A change of variables in the two defining integrals shows that $(Y_1^{\mathrm{rs}},Y_2^{*,\mathrm{rs}})$ is jointly self-similar with indices $h$ and $2h$. Moreover, $Y_2^{*,\mathrm{rs}}$ is continuous and adapted to the natural filtration of $Y_1$. Since the first coordinate of the pair is $Y_1$ itself, the augmented natural filtrations generated by $Y_1$ and by the pair coincide, so quasi-left-continuity is preserved. Since $Z_1(0)=0$, both prelimit scaling processes are nonnegative and continuous at zero. With $e(t)=Z_1(t)-\mu t$ and $M(t)=\sup_{s\le t}|e(s)|$, the strong law and c\`adl\`ag local boundedness give $M(t)=o(t)$ almost surely. Hence
\begin{align*}
    \frac{Z_2^{\mathrm{rs}}(t)}{t^2}
    \le4\frac{M(t)^2}{t^2}\to0
    \qquad\text{almost surely},
\end{align*}
and
\begin{align*}
    \frac{Z_2^{*,\mathrm{rs}}(t)}{t^2}
    \le\frac{4}{t^3}\int_0^tM(u)^2\,\mathrm du\to0
    \qquad\text{almost surely}.
\end{align*}
Indeed, for any $\delta>0$, the tail of the integral is bounded by
$\frac{\delta^2t^3}{3}$ for all sufficiently large $t$, whereas its integral over
any fixed initial interval is $o(t^3)$.
This completes the verification of Assumption~\ref{asmp: fclt}.

\subsubsection{Proof of Theorem~\ref{thm: stop_prop_rep}}
Under the substitution
$(Z_1,Z_2,Y_1,Y_2)=(Z_1^\iota,Z_2^{*,\iota},Y_1^\iota,
Y_2^{*,\iota})$, the corresponding construction theorem verifies
Assumption~\ref{asmp: fclt}. The result follows from
Theorem~\ref{thm: stop_prop}.

\subsubsection{Proof of Theorem~\ref{thm: asymp_valid_rep}}
The convergence in~\eqref{eq: limit_dist_rep}, invariance in $c$, and the
asserted oracle coverage follow from Theorem~\ref{thm: asymp_valid} under
the same substitution.

\subsection{Proofs for Section~\ref{sec: eg}}
\subsubsection{Proof of Theorem~\ref{thm: time}}
\paragraph{Verification of Assumption~\ref{asmp: fclt}.}
Let $b$ be a regularly varying norming function of index $\frac{1}{\alpha}$
associated with $\xi_0$, and take $\ell(t)=\frac{t^{\frac{1}{\alpha}}}{b(t)}$. The
theorem is applied to the centered innovations $\xi_t-\mu_\xi$. The
geometric coefficients are nonnegative, summable to every positive power,
and have partial sums between zero and their total sum, so they satisfy the
finite- and infinite-order coefficient conditions of Theorem~2 in
\cite{krizmanic2023functional}. That theorem, together with the
sample-centering argument in the proof of
Theorem~\ref{thm: iid_infinite}, gives
\eqref{eq: fclt} for the pair $(Z_1,Z_2)$. Here $h=\frac{1}{\alpha}$, and the
limiting pair $(Y_1,Y_2)$ is a jointly self-similar L\'evy process whose
first coordinate is an $\alpha$-stable L\'evy process and whose second
coordinate is an $\frac{\alpha}{2}$-stable subordinator. The first coordinate is
quasi-left-continuous with respect to the augmented joint filtration, and
the second coordinate is almost surely positive at every positive time and
has no negative jumps. Thus Assumption~\ref{asmp: fclt}(a) and (c)--(e)
hold. Birkhoff's ergodic theorem gives
$\frac{Z_1(t)-\mu t}{t}\to0$ almost surely, while
Lemma~\ref{lem:subquadratic_squares} and the least-squares inequality give
$\frac{Z_2(t)}{t^2}\to0$ almost surely. This proves
Assumption~\ref{asmp: fclt}(b).

\paragraph{Verification of Assumption~\ref{asmp: early_stopping}.}
Let
\begin{align*}
    \Delta_n:=\max_{1\le i\le n}X_i-\min_{1\le i\le n}X_i.
\end{align*}
As in~\eqref{eq: iid_range_bound}, $Z_2(n)\ge\frac{\Delta_n^2}{2}$. We notice
that, for every $t\ge2$,
\begin{align*}
    X_t-X_{t-1}
    =\xi_t-(1-\varphi)X_{t-1}
    -\varphi^{p+1}\xi_{t-p-1},
\end{align*}
where the last term is interpreted as zero when $p=\infty$. Successive
conditioning on $\sigma(\xi_s:s\le t-1)$ therefore gives, for every
$x>0$,
\begin{align*}
    \PB(\Delta_n\le x)
    &\le\PB\left(\max_{2\le t\le n}|X_t-X_{t-1}|\le x\right)\\
    &\le\left(\sup_{y\in\RB}\PB(|\xi_0-y|\le x)\right)^{n-1}\\
    &\le\exp\left(-\frac{n-1}{2}
    \PB(|\xi_1-\xi_2|>2x)\right).
\end{align*}
The last inequality follows from
\begin{align*}
    \left(\sup_{y\in\RB}\PB(|\xi_0-y|\le x)\right)^2
    \le\PB(|\xi_1-\xi_2|\le2x),
\end{align*}
together with $(1-u)^a\le\exp(-au)$.
Fix $0<\nu<\frac{\alpha}{\alpha-1}$, $K>0$, and $\eta>0$. Following the
dyadic argument in the proof of Theorem~\ref{thm: iid_infinite}, let
$m_\varepsilon=\lceil\varepsilon^{-\nu}\rceil$,
$r_j=2^jm_\varepsilon$, and let $J_\varepsilon$ be the largest integer
such that $r_{J_\varepsilon}\le\eta a_\varepsilon$. The preceding bounds
give
\begin{align*}
\PB\left(\inf_{\varepsilon^{-\nu}\le n\le\eta a_\varepsilon}
    \frac{\sqrt{Z_2(n)}}{n}\le K\varepsilon\right)
&\le\sum_{j=0}^{J_\varepsilon}
    \exp\left(-\frac{r_j-1}{2}
    \PB\left(|\xi_1-\xi_2|>4\sqrt{2}K\varepsilon r_j\right)\right).
\end{align*}
Since $A(t)=\frac{t}{b(t)}$ and the tail of $|\xi_1-\xi_2|$ is regularly varying
with index $-\alpha$, the remainder of that argument gives, for every
$0<\delta<\alpha-1$, all sufficiently small $\eta$, and some $C>0$
independent of $\eta$,
\begin{align*}
\limsup_{\varepsilon\downarrow0}
    \PB\left(\inf_{\varepsilon^{-\nu}\le n\le\eta a_\varepsilon}
    \frac{\sqrt{Z_2(n)}}{n}\le K\varepsilon\right)
&\le\sum_{k=0}^{\infty}
    \exp\left(-C\eta^{-(\alpha-1-\delta)}
    2^{k(\alpha-1-\delta)}\right).
\end{align*}
The series on the right converges to zero as $\eta\downarrow0$, proving
Assumption~\ref{asmp: early_stopping}.

\subsubsection{Proof of Theorem~\ref{thm: pa_limit}}
Let $b$ be a regularly varying norming function of index
    $\frac{1}{\alpha}$ such that $t\PB(|\xi_1|>b(t))\to1$, and choose the slowly
varying function in the statement as
    $\ell(t)=\frac{t^{\frac{1}{\alpha}}}{b(t)}$, so
    $b(t)^{-1}=t^{-\frac{1}{\alpha}}\ell(t)$.
For $r\ge1$, iteration of \eqref{eq: sgd_linear} and summation over the
iterates give
\begin{align}
    \sum_{i=1}^r(\theta_i-\theta^*)
    =&(\theta_0-\theta^*)\sum_{i=1}^r
    \prod_{l=0}^{i-1}(1-H\eta_l)
    -\frac{1}{H}\sum_{k=1}^r q_{k-1,r}\xi_k,
    \label{eq: pa_exact_decomposition}
\end{align}
where $q_{k,j}$ is defined in Lemma~\ref{lem: q_est}. Since $\varrho<1$,
\begin{align*}
    \sum_{i=1}^{\infty}\prod_{l=0}^{i-1}(1-H\eta_l)
    \le\sum_{i=1}^{\infty}
    \exp\left(-H\sum_{l=0}^{i-1}\eta_l\right)<\infty.
\end{align*}
Thus the first term on the right-hand side of
\eqref{eq: pa_exact_decomposition} is negligible on the $b(n)$ scale.

\paragraph{The first coordinate.}
First, define, for $0\le t\le1$,
\begin{align}
    X_n(t)&=\frac{1}{Hb(n)}\sum_{k=1}^{\lfloor nt\rfloor}
    q_{k-1,\lfloor nt\rfloor}\xi_k,
    &Z_n(t)&=\frac{1}{Hb(n)}
    \sum_{k=1}^{\lfloor nt\rfloor}\xi_k.
    \label{eq: pa_weighted_processes}
\end{align}
For $\Delta>0$, let
\begin{align*}
    \mathcal J_n(\Delta)
    :=\{1\le j\le n:|\xi_j|>\Delta b(n)\}.
\end{align*}
Choose $C_w>0$ sufficiently large, set
$w_n=\lfloor C_wn^\varrho\log n\rfloor$, and define
\begin{align*}
    \bar q_{k,j}:=
    \begin{cases}
        q_{k,j},&j\le k+w_n,\\
        1,&j>k+w_n.
    \end{cases}
\end{align*}
We decompose $X_n$ in the following:
\begin{align}
    X_n(t)
    &=X_n^>(t)+X_n^<(t)\notag\\
    &=\underbrace{\bar X_n^>(t)+Z_n^<(t)}_{Y_n(t)}
    +\underbrace{X_n^>(t)-\bar X_n^>(t)}_{\operatorname{Err}_{n,1}(t)}
    +\underbrace{X_n^<(t)-Z_n^<(t)}_{\operatorname{Err}_{n,2}(t)},
    \label{eq: pa_first_decomposition}
\end{align}
where
\begin{align*}
    X_n^>(t)
    &=\frac{1}{Hb(n)}\sum_{k=1}^{\lfloor nt\rfloor}
    q_{k-1,\lfloor nt\rfloor}\xi_k
    \mathbbm{1}\{|\xi_k|>\Delta b(n)\},\\
    X_n^<(t)
    &=\frac{1}{Hb(n)}\sum_{k=1}^{\lfloor nt\rfloor}
    q_{k-1,\lfloor nt\rfloor}\xi_k
    \mathbbm{1}\{|\xi_k|\le\Delta b(n)\},\\
    \bar X_n^>(t)
    &=\frac{1}{Hb(n)}\sum_{k=1}^{\lfloor nt\rfloor}
    \bar q_{k-1,\lfloor nt\rfloor}\xi_k
    \mathbbm{1}\{|\xi_k|>\Delta b(n)\},\\
    Z_n^>(t)
    &=\frac{1}{Hb(n)}\sum_{k=1}^{\lfloor nt\rfloor}
    \xi_k\mathbbm{1}\{|\xi_k|>\Delta b(n)\},\\
    Z_n^<(t)
    &=\frac{1}{Hb(n)}\sum_{k=1}^{\lfloor nt\rfloor}
    \xi_k\mathbbm{1}\{|\xi_k|\le\Delta b(n)\}.
\end{align*}

\paragraph*{$\operatorname{Err}_{n,1}(t)$ is negligible.}
Put $r=\lfloor nt\rfloor$. By the definition of $\bar q_{k-1,r}$,
\begin{align*}
    \operatorname{Err}_{n,1}(t)
    =\frac{1}{Hb(n)}\sum_{\substack{k\le r\\r-k\ge w_n}}
    (q_{k-1,r}-1)\xi_k
    \mathbbm{1}\{|\xi_k|>\Delta b(n)\}.
\end{align*}
For $r-k\ge w_n$, Lemma~\ref{lem: q_est} gives
\begin{align*}
    |q_{k-1,r}-1|
    \le |q_{k-1,r}-1+P_{k-1,r}|+P_{k-1,r}
    \lesssim k^{\varrho-1}+P_{k-1,r}.
\end{align*}
Moreover, since $\eta_l\ge\eta_n$ for $l\le r\le n$,
\begin{align*}
    P_{k-1,r}
    &\le\exp\left(-H\sum_{l=k}^r\eta_l\right)
    \le\exp(-Hw_n\eta_n)
    \le n^{-d}
\end{align*}
for some $d>0$ and all sufficiently large $n$. Hence
\begin{align*}
    |\operatorname{Err}_{n,1}(t)|
    \lesssim\frac{1}{b(n)}\sum_{k=1}^n
    (k^{\varrho-1}+n^{-d})|\xi_k|
    \mathbbm{1}\{|\xi_k|>\Delta b(n)\}.
\end{align*}
Karamata's theorem gives
\begin{align}
    \EB[|\xi_1|\mathbbm{1}\{|\xi_1|>\Delta b(n)\}]
    \lesssim\frac{b(n)}{n}\Delta^{1-\alpha}.
    \label{eq: pa_truncated_first_moment}
\end{align}
Taking the supremum and expectations for
$\operatorname{Err}_{n,1}(t)$ and using
\eqref{eq: pa_truncated_first_moment}, we obtain
\begin{align}
    \EB\left[\sup_{0\le t\le1}
    |\operatorname{Err}_{n,1}(t)|\right]
    &\lesssim\frac{1}{b(n)}\sum_{k=1}^n
    (k^{\varrho-1}+n^{-d})
    \EB[|\xi_1|\mathbbm{1}\{|\xi_1|>\Delta b(n)\}]\notag\\
    &\lesssim\Delta^{1-\alpha}
    \left(\frac{1}{n}\sum_{k=1}^nk^{\varrho-1}+n^{-d}\right)\notag\\
    &\lesssim\Delta^{1-\alpha}
    (n^{\varrho-1}+n^{-d})\to0.
    \label{eq: err1}
\end{align}

\paragraph*{$\operatorname{Err}_{n,2}(t)$ is negligible.}
Again put $r=\lfloor nt\rfloor$. We have
\begin{align}
    \operatorname{Err}_{n,2}(t)
    =&\frac{1}{Hb(n)}\sum_{k=1}^r
    (q_{k-1,r}-1+P_{k-1,r})\xi_k
    \mathbbm{1}\{|\xi_k|\le\Delta b(n)\}\notag\\
    &-\frac{1}{Hb(n)}\sum_{k=1}^rP_{k-1,r}\xi_k
    \mathbbm{1}\{|\xi_k|\le\Delta b(n)\}.
    \label{eq: err_2}
\end{align}
Let
\begin{align*}
    \tilde{\xi}_k
    :=\xi_k\mathbbm{1}\{|\xi_k|\le\Delta b(n)\}
    -\EB[\xi_k\mathbbm{1}\{|\xi_k|\le\Delta b(n)\}].
\end{align*}
For the first sum in~\eqref{eq: err_2}, centering and
$\EB\xi_1=0$ give
\begin{gather*}
\frac{1}{Hb(n)}\sum_{k=1}^r
    (q_{k-1,r}-1+P_{k-1,r})\xi_k
    \mathbbm{1}\{|\xi_k|\le\Delta b(n)\}\\
=\frac{1}{Hb(n)}\sum_{k=1}^r(q_{k-1,r}-1+P_{k-1,r})\tilde{\xi}_k-\frac{1}{Hb(n)}\sum_{k=1}^r(q_{k-1,r}-1+P_{k-1,r})\EB[\xi_1\mathbbm{1}\{|\xi_1|>\Delta b(n)\}].
\end{gather*}
For the bias term, Lemma~\ref{lem: q_est} and
\eqref{eq: pa_truncated_first_moment} yield
\begin{align}
&\max_{r\le n}\left|\frac{1}{Hb(n)}\sum_{k=1}^r
    (q_{k-1,r}-1+P_{k-1,r})
    \EB[\xi_1\mathbbm{1}\{|\xi_1|>\Delta b(n)\}]
    \right|\notag\\
\le&\frac{C}{b(n)}\sum_{k=1}^nk^{\varrho-1}
    \EB[|\xi_1|\mathbbm{1}\{|\xi_1|>\Delta b(n)\}]\notag\\
\lesssim&\frac{\Delta^{1-\alpha}}{n}
    \sum_{k=1}^nk^{\varrho-1}
    \lesssim\Delta^{1-\alpha}n^{\varrho-1}\to0.
    \label{eq: small_0}
\end{align}
For the mean-zero term, the identity in
Lemma~\ref{lem: q_est} gives
\begin{align*}
\frac{1}{Hb(n)}\sum_{k=1}^r(q_{k-1,r}-1+P_{k-1,r})\tilde{\xi}_k
=&\frac{1}{b(n)}\sum_{i=1}^r\sum_{k=1}^i(\eta_{k-1}-\eta_i)P_{k-1,i-1}\tilde{\xi}_k.
\end{align*}
Moreover, Karamata's theorem gives
\begin{align}
    \EB[\tilde{\xi}_1^2]
    \le\EB[\xi_1^2\mathbbm{1}\{|\xi_1|\le\Delta b(n)\}]
    \lesssim\frac{b(n)^2}{n}\Delta^{2-\alpha}.
    \label{eq: pa_truncated_second_moment}
\end{align}
Thus, Markov's inequality, Cauchy--Schwarz, and
Lemma~\ref{lem: q_square} give, for every $\varepsilon>0$,
\begin{align}
&\PB\left(\max_{r\le n}\left|
    \frac{1}{Hb(n)}\sum_{k=1}^r
    (q_{k-1,r}-1+P_{k-1,r})\tilde{\xi}_k
    \right|>\varepsilon\right)\notag\\
\le&\frac{1}{\varepsilon b(n)}\sum_{i=1}^n
    \EB\left|\sum_{k=1}^i
    (\eta_{k-1}-\eta_i)P_{k-1,i-1}\tilde{\xi}_k
    \right|\notag\\
\le&\frac{1}{\varepsilon b(n)}\sum_{i=1}^n
    \left(\EB[\tilde{\xi}_1^2]
    \sum_{k=1}^i(\eta_{k-1}-\eta_i)^2
    P_{k-1,i-1}^2\right)^{\frac{1}{2}}\notag\\
\lesssim&\frac{\Delta^{1-\frac{\alpha}{2}}}{\sqrt n}
    \sum_{i=1}^ni^{\frac{\varrho-2}{2}}\notag\\
\lesssim&\Delta^{1-\frac{\alpha}{2}}n^{\frac{\varrho-1}{2}}\to0.
    \label{eq: small_1}
\end{align}
Here independence and centering eliminate the cross terms in the
second-moment calculation. For the second sum in~\eqref{eq: err_2}, we
decompose it into a mean-zero term and a bias term:
\begin{gather*}
\frac{1}{Hb(n)}\sum_{k=1}^rP_{k-1,r}\xi_k
    \mathbbm{1}\{|\xi_k|\le\Delta b(n)\}\\
=\frac{1}{Hb(n)}\sum_{k=1}^rP_{k-1,r}\tilde{\xi}_k
    -\frac{1}{Hb(n)}
    \sum_{k=1}^rP_{k-1,r}\EB[\xi_1\mathbbm{1}\{|\xi_1|>\Delta b(n)\}].
\end{gather*}
For the bias term, \eqref{eq: pa_truncated_first_moment} gives
\begin{align}
    \frac{|\EB[\xi_1\mathbbm{1}\{|\xi_1|>\Delta b(n)\}]|}{Hb(n)}
    \max_{r\le n}\sum_{k=1}^rP_{k-1,r}
    &\lesssim\frac{\max_{r\le n}\sum_{k=1}^rP_{k-1,r}}{nH}\\
    &\le\frac{1}{nH^2}\max_{r\le n}\eta_r^{-1}
    \left(H\eta_0P_{0,r}+\sum_{k=2}^r(P_{k-1,r}-P_{k-2,r})\right)\\
    &=\frac{1}{nH^2}\max_{r\le n}\eta_r^{-1}
    \left(P_{r-1,r}-(1-H\eta_0)P_{0,r}\right)\\
    &\le\frac{\max_{r\le n}\eta_r^{-1}}{nH^2}
    \lesssim n^{\varrho-1}\to0.
    \label{eq: small_2}
\end{align}
For the mean-zero term, Abel's lemma gives
\begin{align}
    \left|\frac{1}{Hb(n)}\sum_{k=1}^rP_{k-1,r}\tilde{\xi}_k\right|
    =&\frac{1}{Hb(n)}\left|P_{r-1,r}\sum_{j=1}^r\tilde{\xi}_j
    +\sum_{k=1}^{r-1}(P_{k-1,r}-P_{k,r})
    \sum_{j=1}^k\tilde{\xi}_j\right|\\
    \le&\frac{2}{Hb(n)}\max_{k\le r}
    \left|\sum_{j=1}^k\tilde{\xi}_j\right|.
\end{align}
Doob's maximal inequality and
\eqref{eq: pa_truncated_second_moment} therefore give
\begin{align}
\PB\left(\max_{r\le n}\left|\frac{1}{Hb(n)}
    \sum_{k=1}^rP_{k-1,r}\tilde{\xi}_k\right|>\varepsilon\right)
\le&\PB\left(\max_{k\le n}\left|
    \sum_{j=1}^k\tilde{\xi}_j\right|>
    \frac{\varepsilon Hb(n)}{2}\right)\notag\\
\le&\frac{4n\EB[\tilde{\xi}_1^2]}
    {\varepsilon^2H^2b(n)^2}\notag\\
    \lesssim&\Delta^{2-\alpha}.
    \label{eq: small_3}
\end{align}
Combining~\eqref{eq: small_0}, \eqref{eq: small_1},
\eqref{eq: small_2}, and~\eqref{eq: small_3}, we obtain
\begin{align}
    \limsup_{n\to\infty}\PB\left(
    \sup_{0\le t\le1}|\operatorname{Err}_{n,2}(t)|>\varepsilon
    \right)\lesssim\Delta^{2-\alpha}.
    \label{eq: err2}
\end{align}

\paragraph*{$M_1$ convergence between $Y_n$ and $Z_n$.}
For $K>0$, we consider the following good event $G_n(\Delta)$:
\begin{align}
    G_n(\Delta)=\left\{|\mathcal J_n(\Delta)|\le K,
    \min_{i\neq j\in\mathcal J_n(\Delta)}|i-j|>2w_n
    ,\ j\le n-w_n\text{ for every }j\in\mathcal J_n(\Delta)\right\}.
\end{align}
We use the conventions $\min\varnothing=+\infty$ and
$\max\varnothing=0$.
Under the good event $G_n(\Delta)$, there exists disjoint intervals:
\begin{align}
    \mathcal I_n:=\bigcup_{j\in\mathcal J_n(\Delta)}
    \left[\frac{j}{n},\frac{j+w_n}{n}\right]
    :=\bigcup_{j\in\mathcal J_n(\Delta)}I_{j,n}.
\end{align}
For $t\notin\mathcal I_n$, we have:
\begin{align}
    Y_n(t)=Z_n(t).
\end{align}
For $t\in I_{j,n}$ and $j\in\mathcal J_n(\Delta)$, we have:
\begin{align}
    &Y_n(t)=Z_n^<(t)+\frac{1}{Hb(n)}
    \sum_{\substack{k<j,\,k\in\mathcal J_n(\Delta)}}\xi_k
    +\frac{1}{Hb(n)}\bar q_{j-1,\lfloor nt\rfloor}\xi_j,\\
    &Z_n(t)=Z_n^<(t)+\frac{1}{Hb(n)}
    \sum_{\substack{k<j,\,k\in\mathcal J_n(\Delta)}}\xi_k
    +\frac{1}{Hb(n)}\xi_j.
\end{align}
Since the intervals $I_{j,n}$, $j\in\mathcal J_n(\Delta)$, are
disjoint, for $t\in I_{j,n}$ define
\begin{align}
    B_n(t):=Z_n^<(t)+\frac{1}{Hb(n)}
    \sum_{\substack{k<j,\,k\in\mathcal J_n(\Delta)}}\xi_k.
\end{align}
\begin{figure}[H]
    \centering
    \includegraphics[width=0.7\linewidth]{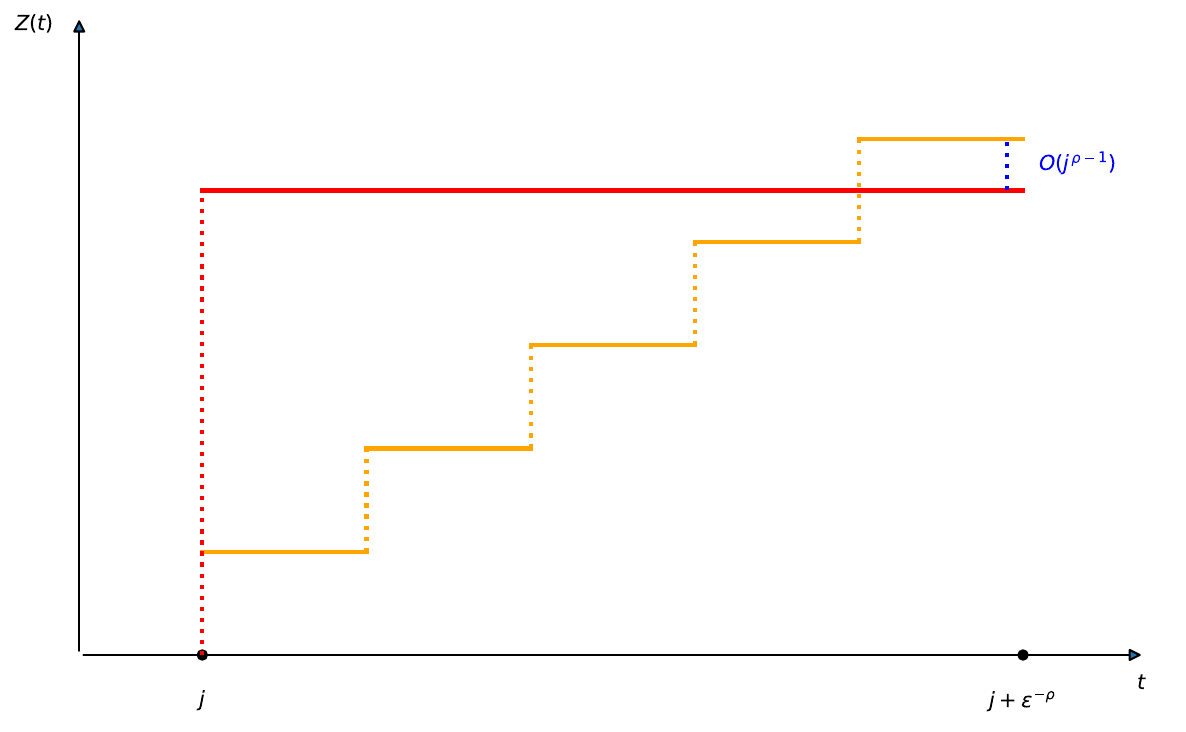}
    \caption{On the large-jump event $
\left\{
|g(\theta_j;\xi_{j+1})|
>
	\frac{n^{\frac{1}{\alpha}}}{\ell(n)}
\right\}$, the realized contribution of this jump to \(Z_1(\cdot)\) is represented by the orange step function.
The corresponding averaged linear response is given by $-H^{-1}g(\theta_j;\xi_{j+1})$,
which is represented by the red step function.
Over a local time window of order \(n^{\varrho}\log n\), these two step functions coincide up to a controllable error of order \(O(j^{\varrho-1})\).}
    \label{fig: m1sgd}
\end{figure}
We construct ordered parametric representations
$(\lambda_Y,\rho_Y)$ and $(\lambda_Z,\rho_Z)$ of the completed graphs of
$Y_n$ and $Z_n$, respectively, restricted to $I_{j,n}$ and indexed by
$[0,1]$. After an affine rescaling, these local parameter
intervals can be identified with intervals $U_j=[u_j,u_{j+1}]$ having
disjoint interiors for $j\in\mathcal J_n(\Delta)$. For
$u\notin\bigcup_{j\in\mathcal J_n(\Delta)}U_j$, the parametrizations
for $Y_n$ and $Z_n$ should be exactly the same as the complete graphs
for these two processes are the same. We consider two phases
decomposition for some $u^*\in(0,1)$:
\begin{align}
    [0,1]=[0,u^*]\cup[u^*,1].
\end{align}
In $u\in[0,u^*]$, we set that parametrization $(\lambda_Y,\rho_Y)$
will go over the entire complete graph of $Y_n(\cdot)$ in $I_{j,n}$,
while $(\lambda_Z,\rho_Z)$ will stay at the vertical segment at
$\frac{j}{n}$. Thus, we have:
\begin{align}
    &\sup_{u\in[0,u^*]}|\lambda_Y(u)-\lambda_Z(u)|
    \le\frac{w_n}{n},\\
    &\sup_{u\in[0,u^*]}|\rho_Y(u)-\rho_Z(u)|
    \le\max_{j\le r\le j+w_n}\left|B_n\left(\frac{r}{n}\right)
    -B_n\left(\frac{j}{n}\right)\right|
    +\sup_{u\in[0,u^*]}|\rho_{\tilde Y}(u)
    -\rho_{\tilde Z}(u)|.
\end{align}
Here $\rho_{\tilde Y}(u)$ and $\rho_{\tilde Z}(u)$ are the
parametrizations for processes $(t\in I_{j,n})$
\begin{align}
    &\tilde Y_n(t)=
    \begin{cases}
        0, &t=\frac{j}{n}-,\\
        \frac{1}{Hb(n)}\bar q_{j-1,\lfloor nt\rfloor}\xi_j,
        &t\in I_{j,n},
    \end{cases}\\
    &\tilde Z_n(t)=
    \begin{cases}
        0, &t=\frac{j}{n}-,\\
        \frac{1}{Hb(n)}\xi_j,&t\in I_{j,n},
    \end{cases}
\end{align}
and $\lambda_{\tilde Y}=\lambda_Y$,
$\lambda_{\tilde Z}=\lambda_Z$. We notice that $q_{j-1,k}$ is
non-decreasing with $k$ and satisfies
$q_{j-1,k}\le1+Cj^{\varrho-1}$.
Let $\Pi_{n,j}$ be the projection onto the closed line segment
with endpoints $0$ and $\frac{\xi_j}{Hb(n)}$, and choose
$\rho_{\tilde Z}(u)=\Pi_{n,j}(\rho_{\tilde Y}(u))$ during this
phase.
Thus, we can control the error by:
\begin{align}
    \sup_{u\in[0,u^*]}|\rho_{\tilde Y}(u)
    -\rho_{\tilde Z}(u)|
    \le\frac{|\xi_j|}{Hb(n)}\max_{j\le r\le j+w_n}
    (\bar q_{j-1,r}-1)_+
    \le\frac{Cj^{\varrho-1}|\xi_j|}{Hb(n)},
\end{align}
where the last inequality is due to Lemma~\ref{lem: q_est}. Then, we
have the error:
\begin{align}
    &\sup_{u\in[0,u^*]}|\lambda_Y(u)-\lambda_Z(u)|
    \le\frac{w_n}{n},\\
    &\sup_{u\in[0,u^*]}|\rho_Y(u)-\rho_Z(u)|
    \le\max_{j\le r\le j+w_n}\left|B_n\left(\frac{r}{n}\right)
    -B_n\left(\frac{j}{n}\right)\right|
    +\frac{Cj^{\varrho-1}|\xi_j|}{Hb(n)}.
\end{align}
In $u\in[u^*,1]$, we froze the process $Y_n$ as the entire complete
graph has been visited in the last stage. And we go over the entire
complete graph of $Z_n$ from point
$\left(\frac{j}{n},B_n\left(\frac{j}{n}\right)+\frac{\xi_j}{Hb(n)}\right)$:
\begin{align}
    &\lambda_Y(u)=\frac{j+w_n}{n},\\
    &\rho_Y(u)=B_n\left(\frac{j+w_n}{n}\right)
    +\frac{1}{Hb(n)}\xi_j,
\end{align}
which is due to the final point in the complete graph of $Y_n$ is
$Y_n\left(\frac{j+w_n}{n}\right)$ and $\bar q_{j-1,j+w_n}=1$. Thus, we can control the
error by:
\begin{align}
    \sup_{u\in[u^*,1]}|\lambda_Y(u)-\lambda_Z(u)|
    &\le\frac{w_n}{n},\\
    \sup_{u\in[u^*,1]}|\rho_Y(u)-\rho_Z(u)|
    &\le\max_{j\le r\le j+w_n}
    \left|B_n\left(\frac{r}{n}\right)
    -B_n\left(\frac{j+w_n}{n}\right)\right|\\
    &\le2\max_{j\le r\le j+w_n}
    \left|B_n\left(\frac{r}{n}\right)-B_n\left(\frac{j}{n}\right)\right|.
\end{align}
The last inequality follows from the triangle inequality.
For each $j\in\mathcal J_n(\Delta)$, the local graph pieces have the same initial and terminal values:
\begin{align*}
    Y_n\left(\frac{j}{n}-\right)
    &=Z_n\left(\frac{j}{n}-\right),
    &Y_n\left(\frac{j+w_n}{n}\right)
    &=Z_n\left(\frac{j+w_n}{n}\right).
\end{align*}
Moreover, monotonicity of $q_{j-1,r}$ and the definition of $\Pi_{n,j}$ ensure that the projected spatial component traverses the vertical segment of the completed graph of $Z_n$ in graph order, possibly with a constant portion. Thus, the preceding constructions are ordered continuous parametric representations of the two local graph pieces.
On $G_n(\Delta)$, order the intervals $I_{j,n}$ by increasing $j$ and concatenate the preceding local representations in that order. On the complement of their union, use the same ordered representation of the common completed graph of $Y_n$ and $Z_n$, inserting flat spots in either representation when necessary. The endpoint choices in the two phases above agree with the adjacent common pieces. The concatenated representations are therefore continuous, have nondecreasing time components, preserve the completed-graph order, and are onto the completed graphs of $Y_n$ and $Z_n$. Thus, they form a valid pair of global parametric representations.
Combining over $j\in\mathcal J_n(\Delta)$, we have:
\begin{align}
    d_{M_1,[0,1]}(Y_n,Z_n)&\le\frac{w_n}{n}
    +2\max_{\substack{j\in\mathcal J_n(\Delta)\\j\le r\le j+w_n}}
    \left|B_n\left(\frac{r}{n}\right)-B_n\left(\frac{j}{n}\right)\right|
    +\max_{j\in\mathcal J_n(\Delta)}
    \frac{Cj^{\varrho-1}|\xi_j|}{Hb(n)}.
    \label{eq: bias}
\end{align}
For the second term in Eqn~\eqref{eq: bias}, for any
$\varepsilon>0$, under the good event $G_n(\Delta)$, we have:
\begin{align}
    &\PB\left(
    \max_{\substack{j\in\mathcal J_n(\Delta)\\j\le r\le j+w_n}}
    \left|B_n\left(\frac{r}{n}\right)-B_n\left(\frac{j}{n}\right)\right|
    >\varepsilon;G_n(\Delta)\right)\\
    \le&\ \sum_{j=1}^{n-w_n}
    \PB\left(|\xi_j|>\Delta b(n),
    \frac{1}{Hb(n)}\max_{r\le w_n}
    \left|\sum_{k=j+1}^{j+r}\xi_k
    \mathbbm{1}\{|\xi_k|\le\Delta b(n)\}\right|>\varepsilon\right)\\
    =&\ (n-w_n)\PB(|\xi_1|>\Delta b(n))
    \PB\left(\frac{1}{Hb(n)}\max_{r\le w_n}
    \left|\sum_{k=1}^r\xi_k
    \mathbbm{1}\{|\xi_k|\le\Delta b(n)\}\right|>\varepsilon\right)\\
    \lesssim&\ 
    \PB\left(\frac{1}{Hb(n)}\max_{r\le w_n}
    \left|\sum_{k=1}^r\xi_k
    \mathbbm{1}\{|\xi_k|\le\Delta b(n)\}\right|>\varepsilon\right).
\end{align}
We notice that:
\begin{align}
    \frac{\sum_{k=1}^{w_n}\EB[|\xi_1|
    \mathbbm{1}\{|\xi_1|>\Delta b(n)\}]}{b(n)}
    \lesssim\frac{w_n}{n}\to0.
\end{align}
Recall that
$\tilde{\xi}_k=\xi_k\mathbbm{1}\{|\xi_k|\le\Delta b(n)\}
-\EB[\xi_k\mathbbm{1}\{|\xi_k|\le\Delta b(n)\}]$. When $n$ is
large, by Doob's maximal inequality, we have:
\begin{align}
    \PB\left(\frac{1}{Hb(n)}\max_{r\le w_n}
    \left|\sum_{k=1}^r\xi_k
    \mathbbm{1}\{|\xi_k|\le\Delta b(n)\}\right|>\varepsilon\right)
    &\le\PB\left(\frac{1}{Hb(n)}\max_{r\le w_n}
    \left|\sum_{k=1}^r\tilde{\xi}_k\right|>\frac{\varepsilon}{2}\right)\\
    &\le\frac{4w_n\EB\tilde{\xi}_1^2}
    {b(n)^2H^2\varepsilon^2}\\
    &\lesssim\frac{w_n}{n}\to0,
\end{align}
which implies:
\begin{align}
    \label{eq: bias_1}
    \PB\left(
    \max_{\substack{j\in\mathcal J_n(\Delta)\\j\le r\le j+w_n}}
    \left|B_n\left(\frac{r}{n}\right)-B_n\left(\frac{j}{n}\right)\right|
    >\varepsilon;G_n(\Delta)\right)
    \lesssim\frac{w_n}{n}\to0.
\end{align}
For the third term in Eqn~\eqref{eq: bias}, we have
\begin{align}
    \PB\left(\max_{j\in\mathcal J_n(\Delta)}
    \frac{Cj^{\varrho-1}|\xi_j|}{Hb(n)}>\varepsilon;
    G_n(\Delta)\right)&\le\PB\left(\max_{j\le n}
    \frac{Cj^{\varrho-1}|\xi_j|
    \mathbbm{1}\{|\xi_j|>\Delta b(n)\}}{Hb(n)}>\varepsilon\right)\\
    &\le\sum_{j=1}^n\PB\left(
    \frac{Cj^{\varrho-1}|\xi_j|
    \mathbbm{1}\{|\xi_j|>\Delta b(n)\}}{Hb(n)}>\varepsilon\right)\\
    &\le\sum_{j=1}^n\frac{Cj^{\varrho-1}\EB[|\xi_j|
    \mathbbm{1}\{|\xi_j|>\Delta b(n)\}]}
    {\varepsilon Hb(n)}\\
    &\lesssim\frac{\sum_{j=1}^nj^{\varrho-1}}{n}\\
    &\lesssim\frac{1}{n^{1-\varrho}}\to0.
    \label{eq: bias_2}
\end{align}
Thus, for Eqn~\eqref{eq: bias}, we combine Eqn~\eqref{eq: bias_1}
and~\eqref{eq: bias_2}:
\begin{align}
    \limsup_{n\to\infty}\PB\left(
    d_{M_1,[0,1]}(Y_n,Z_n)>\varepsilon;G_n(\Delta)\right)=0.
    \label{eq: y_1}
\end{align}
We also have:
\begin{align}
    \label{eq: bad}
    \PB(G_n(\Delta)^c)
    \le&\PB\left(|\mathcal J_n(\Delta)|>K\right)
    +\PB\left(\exists i\neq j\in\mathcal J_n(\Delta),
    |i-j|\le2w_n\right)\notag\\
    &+\PB\left(\mathcal J_n(\Delta)\cap
    \{n-w_n+1,\ldots,n\}\neq\varnothing\right).
\end{align}
For the first term in Eqn~\eqref{eq: bad}, we have:
\begin{align}
    \PB\left(|\mathcal J_n(\Delta)|>K\right)
    \le\frac{\EB|\mathcal J_n(\Delta)|}{K}
    =\frac{n\PB(|\xi_1|>\Delta b(n))}{K}
    \lesssim\frac{1}{\Delta^\alpha K}.
\end{align}
For the second term in Eqn~\eqref{eq: bad}, we have:
\begin{align}
    \PB\left(\exists i\neq j\in\mathcal J_n(\Delta),
    |i-j|\le2w_n\right)
    &\le\sum_{j=1}^n\sum_{i=1}^{2w_n}
    \PB(j\in\mathcal J_n(\Delta),j+i\in\mathcal J_n(\Delta))\\
    &\le2nw_n\PB(|\xi_1|>\Delta b(n))^2\\
    &\lesssim\frac{w_n}{n}.
\end{align}
For the third term in Eqn~\eqref{eq: bad}, we have:
\begin{align}
    \PB\left(\mathcal J_n(\Delta)\cap
    \{n-w_n+1,\ldots,n\}\neq\varnothing\right)
    \le w_n\PB(|\xi_1|>\Delta b(n))
    \lesssim\frac{w_n}{n}.
\end{align}
Thus, we conclude:
\begin{align}
    \limsup_{n\to\infty}\PB(G_n(\Delta)^c)
    \lesssim\frac{1}{\Delta^\alpha K}.
    \label{eq: y_2}
\end{align}
Combining Eqn~\eqref{eq: y_1} and~\eqref{eq: y_2}, we have:
\begin{align}
    \limsup_{n\to\infty}\PB\left(
    d_{M_1,[0,1]}(Y_n,Z_n)>\varepsilon\right)
    \lesssim\frac{1}{\Delta^\alpha K}.
\end{align}
By the error control for $\operatorname{Err}_{n,1}$ and
$\operatorname{Err}_{n,2}$ in probability by Eqn~\eqref{eq: err1}
and~\eqref{eq: err2}, we have:
\begin{align}
    &\limsup_{n\to\infty}\PB\left(
    d_{M_1,[0,1]}(X_n,Z_n)>3\varepsilon\right)\\
    \le&\limsup_{n\to\infty}\left(
    \PB\left(d_{M_1,[0,1]}(Y_n,Z_n)>\varepsilon\right)
    \!+\!\PB\left(\sup_{t\in[0,1]}
    |\operatorname{Err}_{n,1}(t)|>\varepsilon\right)
    \!+\!\PB\left(\sup_{t\in[0,1]}
    |\operatorname{Err}_{n,2}(t)|>\varepsilon\right)\right)\\
    \lesssim&\Delta^{2-\alpha}+\frac{1}{\Delta^\alpha K}.
\end{align}
As $\Delta$ is arbitrarily small, $K$ is arbitrarily large, and
$\alpha\in(1,2)$, we can choose $K=\Delta^{-2}$ and conclude:
\begin{align}
    d_{M_1,[0,1]}(X_n,Z_n)\topb0.
    \label{eq: pa_weighted_m1}
\end{align}
The same argument applies on every compact interval. Consequently, by~\eqref{eq: pa_exact_decomposition},
\begin{align}
&d_{M_1,[0,T]}\left(
    \frac{Z_1(n\,\cdot)-\theta^*\lfloor n\,\cdot\rfloor}{b(n)},
    -\frac{1}{Hb(n)}\sum_{k=1}^{\lfloor n\,\cdot\rfloor}\xi_k
    \right)\topb0.
    \label{eq: pa_first_coordinate_approximation}
\end{align}

We first prove the almost-sure convergence of the iterates:
\begin{align}
    \theta_n\to\theta^*
    \qquad\text{a.s.}
    \label{eq: pa_as_convergence}
\end{align}
Indeed, choose $\gamma\in(1,\alpha)$ such that $\gamma\varrho>1$. Since $\EB|\xi_1|^\gamma<\infty$ and $\sum_{i\ge1}\eta_{i-1}^\gamma<\infty$, the martingale convergence theorem and the von Bahr--Esseen inequality show that $\sum_{i=1}^{\infty}\eta_{i-1}\xi_i$ converges almost surely, which is also similar with the proof in \cite{krasulina1969stochastic}. For fixed $N<n$, iteration of~\eqref{eq: sgd_linear} gives
\begin{align*}
    \theta_n-\theta^*
    =P_{N-1,n-1}(\theta_N-\theta^*)
    -\sum_{i=N+1}^nP_{i-1,n-1}\eta_{i-1}\xi_i.
\end{align*}
Abel summation gives
\begin{align*}
    \left|\sum_{i=N+1}^nP_{i-1,n-1}\eta_{i-1}\xi_i\right|
    \le2\sup_{k\ge N}\left|\sum_{i=N+1}^k\eta_{i-1}\xi_i\right|.
\end{align*}
For every fixed $N$, $P_{N-1,n-1}\to0$ as $n\to\infty$, while the right-hand side converges to zero almost surely as $N\to\infty$. This proves~\eqref{eq: pa_as_convergence}. For $Z_2(\cdot)$, the functional CLT follows
directly from \citet[Lemma~S1]{blanchet2026statistical} because the
residual in the decomposition there is negligible uniformly on compact
intervals.

\paragraph{Verification of Assumption~\ref{asmp: fclt}.}
All conditions other than~(b) are immediate. For $Z_1(\cdot)$, \eqref{eq: pa_as_convergence} and Ces\`aro's lemma give, for every $h_0\ge1$,
\begin{align*}
    \frac{|Z_1(t)-\theta^*t|}{t^{h_0}}\to0
    \qquad\text{a.s.}
\end{align*}
For almost sure convergence for $\frac{Z_2(t)}{t^2}$, it is an instant corollary from the decomposition in \citet[Lemma~S1]{blanchet2026statistical}.

\paragraph{Verification of Assumption~\ref{asmp: early_stopping}.}
The assumption can be verified directly by the same dyadic
argument as in the i.i.d. infinite-variance case, since,
conditionally on the past, $g(\theta_{i-1};\xi_i)$ is $\xi_i$
plus a deterministic shift.
Let $\mathcal F_i=\sigma(\xi_1,\ldots,\xi_i)$ and define
\begin{align*}
    Q_\xi(x):=\sup_{y\in\RB}\PB(|\xi_1-y|\le x),
    \qquad x>0.
\end{align*}
Since $\theta_{i-1}$ is $\mathcal F_{i-1}$-measurable and $g(\theta_{i-1};\xi_i)=\xi_i+H(\theta_{i-1}-\theta^*)$, successive conditioning gives, for every integer $r\ge1$,
\begin{align*}
    \PB\left(\max_{1\le i\le r}|g(\theta_{i-1};\xi_i)|\le x\right)
    \le Q_\xi(x)^r.
\end{align*}
Furthermore,
\begin{align*}
    Q_\xi(x)^2\le\PB(|\xi_1-\xi_2|\le2x),
\end{align*}
and hence
\begin{align}
    \PB\left(\max_{1\le i\le r}|g(\theta_{i-1};\xi_i)|\le x\right)
    \le\exp\left(-\frac{r}{2}\PB(|\xi_1-\xi_2|>2x)\right).
    \label{eq: pa_conditional_concentration}
\end{align}
The verification can be obtained by the same argument of the proof of Theorem~\ref{thm: iid_infinite}.

\subsubsection{Proof of Theorem~\ref{thm: queue_reward_tail}}
    The proof for this theorem is an extension from the proof of
    Proposition~1 of \cite{denisov2021tail}. For completeness consideration, we detail the proof here. Set $S_0=0$ and $S_n=\sum_{i=1}^nX_i$.
    Before time $\beta_1$, Lindley's recursion gives $W_i=S_i>0$,
    while $W_{\beta_1}=0$. Hence, with
    \begin{align*}
        R_p=\sum_{i=1}^{\beta_1-1}S_i^p,
        \qquad
        M=\max_{0\le i<\beta_1}S_i,
    \end{align*}
    It can also be verified that
    $\sum_{i=1}^{\beta_1}W_i^p=R_p$. Since $A_1$ is
    exponentially distributed and independent of $U_0$, regular
    variation of the service-time tail gives
    \begin{align}
        \PB(X_1>x)\sim\PB(U_0>x),
        \qquad x\to\infty.
        \label{eq: increment_tail}
    \end{align}
    Thus, the right tail of $X_1$ is regularly varying with index
    $-\alpha$.
    We first record the one-big-jump estimates for a negative-drift
    random walk with a regularly varying right tail. Proposition~4 of
    \cite{denisov2021tail} gives
    \begin{align}
        \PB(M>y)&\sim \EB[\beta_1]\PB(X_1>y),
        \label{eq: queue_max_tail}\\
        \PB(\beta_1>t)&\sim
        \EB[\beta_1]\PB(X_1>at).
        \label{eq: queue_length_tail}
    \end{align}
    We next establish the fluid approximation for the reward collected
    after a large jump. For each deterministic starting level $v>0$,
    define, using the same increment sequence $\{X_i\}$,
    \begin{align*}
        S_j^{(v)}=v+S_j,\qquad
        \tau(v)=\inf\{j\ge1:S_j^{(v)}\le0\},\qquad
        H_p(v)=\sum_{j=0}^{\tau(v)-1}\left(S_j^{(v)}\right)^p.
    \end{align*}
    We study this family as the starting level $v\to\infty$; all the
    limits in the following argument are taken in this sense.
    Consider the centered random walk associated with $S_j$,
    \begin{align*}
        C_j:=S_j-\EB[S_j]=S_j+aj
        =\sum_{i=1}^j(X_i-\EB[X_i]).
    \end{align*}
    Since $C_j$ has integrable mean-zero increments, the strong law of
    large numbers gives, for every fixed $T>0$,
    \begin{align*}
        \frac{1}{v}\max_{0\le j\le\lfloor vT\rfloor}|C_j|\toas0.
    \end{align*}
    Using $S_j^{(v)}=v+C_j-aj$, we consequently obtain
    \begin{align}
        \sup_{0\le t\le T}\left|\frac{S_{\lfloor vt\rfloor}^{(v)}}{v}-(1-at)\right|
        &\le
        \frac{1}{v}\max_{0\le j\le\lfloor vT\rfloor}|C_j|
        +\frac{a}{v}\toas0.
        \label{eq: queue_fluid_path}
    \end{align}
    We first use this path convergence to locate the first-passage
    time. Fix $\delta\in\left(0,\frac{1}{a}\right)$, take $T>\frac{1}{a}+\delta$, and let
    \begin{align*}
        E_v(\delta)=\left\{
        \sup_{0\le t\le T}
        \left|
        \frac{S_{\lfloor vt\rfloor}^{(v)}}{v}-(1-at)
        \right|<\frac{a\delta}{2}
        \right\}.
    \end{align*}
    Equation~\eqref{eq: queue_fluid_path} implies that
    $E_v(\delta)$ occurs eventually almost surely. Put
    $j_v=\lceil v\left(\frac{1}{a}+\delta\right)\rceil$. For all sufficiently large $v$,
    $\frac{j_v}{v}\le T$, and on $E_v(\delta)$,
    \begin{align*}
        \frac{S_j^{(v)}}{v}
        &\ge \frac{a\delta}{2}>0,
        &&0\le j\le\lfloor v\left(\frac{1}{a}-\delta\right)\rfloor,\\
        \frac{S_{j_v}^{(v)}}{v}
        &\le-\frac{a\delta}{2}<0.
    \end{align*}
    Therefore,
    \begin{align*}
        \lfloor v\left(\frac{1}{a}-\delta\right)\rfloor
        <\tau(v)\le
        \lceil v\left(\frac{1}{a}+\delta\right)\rceil
    \end{align*}
    eventually almost surely. Since $\delta$ is arbitrary,
    \begin{align}
        \frac{\tau(v)}{v}\toas\frac{1}{a}.
        \label{eq: queue_fluid_hitting_time}
    \end{align}
    We now turn to the reward. Since the integrand below is constant on
    each interval $[\frac{j}{v},\frac{j+1}{v})$,
    \begin{align*}
        \frac{H_p(v)}{v^{p+1}}
        =\int_0^{\frac{\tau(v)}{v}}
        \left(\frac{S_{\lfloor vt\rfloor}^{(v)}}{v}\right)^p
        \mathrm{d}t.
    \end{align*}
    Let $(x)_+=\max\{x,0\}$ and set
    \begin{align*}
        g_v(t)=\left(
        \frac{S_{\lfloor vt\rfloor}^{(v)}}{v}
        \right)_+^p,
        \qquad
        g(t)=(1-at)_+^p.
    \end{align*}
    For $j<\tau(v)$, $S_j^{(v)}>0$, so replacing the integrand by
    $g_v$ does not change the preceding identity. Moreover,
    \eqref{eq: queue_fluid_path} and continuity of
    $x\mapsto(x)_+^p$ give
    \begin{align*}
        \sup_{0\le t\le T}|g_v(t)-g(t)|\toas0.
    \end{align*}
    By \eqref{eq: queue_fluid_hitting_time}, $\frac{\tau(v)}{v}<T$
    eventually almost surely, and hence
    \begin{align*}
        \left|\frac{H_p(v)}{v^{p+1}}-\int_0^{\frac{1}{a}}g(t)\,\mathrm{d}t\right|
        &\le T\sup_{0\le t\le T}|g_v(t)-g(t)|
        +\left|\int_{\frac{\tau(v)}{v}}^{\frac{1}{a}}g(t)\,\mathrm{d}t\right|\toas0.
    \end{align*}
    Thus,
    \begin{align}
        \frac{H_p(v)}{v^{p+1}}
        \xrightarrow[v\to\infty]{\mathrm{a.s.}}
        \int_0^{\frac{1}{a}}(1-at)^p\,\mathrm{d}t
        =\frac{1}{a(p+1)}.
        \label{eq: queue_fluid_reward}
    \end{align}
    Define
    \begin{align*}
        q_x=(a(p+1)x)^{\frac{1}{p+1}}.
    \end{align*}
    The choice of $q_x$ is dictated by
    \eqref{eq: queue_fluid_reward}: an excursion starting from height
    $q_x$ accumulates approximately $x$ units of reward. The remainder
    of the proof therefore has the following two targets. For every
    fixed $\varepsilon\in(0,1)$, we first show
    \begin{align*}
        \PB(R_p>x,\,M>\varepsilon q_x)
        \sim \EB[\beta_1]\PB(X_1>q_x),
    \end{align*}
    and we then show that
    \begin{align*}
        \lim_{\varepsilon\downarrow0}\limsup_{x\to\infty}
        \frac{\PB(R_p>x,\,M\le\varepsilon q_x)}
        {\EB[\beta_1]\PB(X_1>q_x)}=0.
    \end{align*}
    Together, these two statements give
    \eqref{eq: queue_reward_tail}.
    We begin with the first target. Fix $\varepsilon\in(0,1)$, set
    $y=\varepsilon q_x$, and let
    \begin{align*}
        \sigma_y=\inf\{n<\beta_1:S_n>y\}.
    \end{align*}
    On $\{M>y\}$, $\sigma_y$ is the time at which the excursion first
    exceeds $y$. We first keep this time fixed. The key step is to
    replace the reward event $\{R_p>x\}$ by the height event
    $\{S_k>q_x\}$. More precisely, for every fixed $k$, we will prove
    \begin{align}
        \PB(R_p>x,\,\sigma_y=k)&=\PB(S_k>q_x,\,\sigma_y=k)
        +o\bigl(\PB(X_1>q_x)\bigr).
        \label{eq: queue_reward_height_replacement}
    \end{align}
    We will then evaluate the probability on the right as
    \begin{align}
        \PB(S_k>q_x,\,\sigma_y=k)
        \sim
        \PB(\beta_1>k-1)\PB(X_1>q_x).
        \label{eq: queue_fixed_jump_height}
    \end{align}
    Thus, all the bounds below serve only to justify the replacement
    in \eqref{eq: queue_reward_height_replacement}.
    On $\{\sigma_y=k\}$, the reward accumulated before time $k$ is
    \begin{align*}
        R_{p,k}^{-}=\sum_{j=1}^{k-1}S_j^p,
    \end{align*}
    and, on $\{\sigma_y=k\}$,
    \begin{align}
        0\le R_{p,k}^{-}
        \le(k-1)y^p=o(x)
        \qquad\text{for every fixed }k.
        \label{eq: queue_pre_jump_reward}
    \end{align}
    To describe the reward after time $k$, for $v>0$ define
    \begin{align*}
        S_j^{(k,v)}
        &=v+\sum_{\ell=1}^jX_{k+\ell},
        \qquad S_0^{(k,v)}=v,\\
        \tau_k(v)
        &=\inf\{j\ge1:S_j^{(k,v)}\le0\},\\
        H_p^{(k)}(v)
        &=\sum_{j=0}^{\tau_k(v)-1}\left(S_j^{(k,v)}\right)^p.
    \end{align*}
    On $\{\sigma_y=k\}$, the part of the original excursion beginning
    at time $k$ is exactly the walk
    $\{S_j^{(k,S_k)}\}_{j\ge0}$ stopped at $\tau_k(S_k)$. Therefore,
    \begin{align*}
        R_p=R_{p,k}^{-}+H_p^{(k)}(S_k),
    \end{align*}
    on $\{\sigma_y=k\}$. Moreover, if
    $\mathcal F_k=\sigma(X_1,\ldots,X_k)$, then
    $\{X_{k+\ell}\}_{\ell\ge1}$ is independent of $\mathcal F_k$ and
    has the same law as $\{X_\ell\}_{\ell\ge1}$. Hence, for every
    deterministic $v>0$,
    \begin{align*}
        \mathcal L\left(H_p^{(k)}(v)\mid\mathcal F_k\right)
        =\mathcal L\left(H_p(v)\right).
    \end{align*}
    We now bound the two errors that can occur in the replacement
    \eqref{eq: queue_reward_height_replacement}. Fix
    $\eta\in(0,1-\varepsilon)$. A false negative occurs if
    $S_k>(1+\eta)q_x$ but $R_p\le x$, whereas a false positive occurs
    if $S_k\le(1-\eta)q_x$ but $R_p>x$. Since
    $x=\frac{q_x^{p+1}}{a(p+1)}$, equation
    \eqref{eq: queue_fluid_reward} implies
    \begin{gather*}
        \sup_{v\ge(1+\eta)q_x}\PB(H_p(v)\le x)\to0,\\
        \sup_{y\le v\le(1-\eta)q_x}
        \PB\left(H_p(v)>x-(k-1)y^p\right)\to0.
    \end{gather*}
    For the second relation, \eqref{eq: queue_pre_jump_reward} and
    $v\le(1-\eta)q_x$ imply
    \begin{align*}
        \frac{x-(k-1)y^p}{v^{p+1}}
        \ge
        \frac{1+o(1)}
        {a(p+1)(1-\eta)^{p+1}}
        >
        \frac{1}{a(p+1)}.
    \end{align*}
    The first relation follows analogously because, when
    $v\ge(1+\eta)q_x$,
    $\frac{x}{v^{p+1}}\le\frac{1}{a(p+1)(1+\eta)^{p+1}}$.
    Moreover, by \eqref{eq: queue_max_tail},
    \begin{align*}
        \PB(\sigma_y=k)\le\PB(M>y)
        =O(\PB(X_1>y))
        =O(\PB(X_1>q_x)).
    \end{align*}
    Let
    \begin{align*}
        B_{k,x}^{+}&=\{S_k>(1+\eta)q_x\},\qquad
        B_{k,x}^{-}=\{S_k>(1-\eta)q_x\}.
    \end{align*}
    Since $R_{p,k}^{-}\ge0$, on
    $B_{k,x}^{+}\cap\{\sigma_y=k\}$ the event
    $\{R_p\le x\}$ implies
    $\{H_p^{(k)}(S_k)\le x\}$. Conditioning on $\mathcal F_k$ and using
    the conditional-law identity above,
    \begin{align*}
        \PB(B_{k,x}^{+},\,\sigma_y=k,\,R_p\le x)
        &\le\PB(\sigma_y=k)\sup_{v\ge(1+\eta)q_x}\PB(H_p(v)\le x)
        =o(\PB(X_1>q_x)).
    \end{align*}
    Therefore,
    \begin{align*}
        \PB(R_p>x,\,\sigma_y=k)
        \ge\PB(B_{k,x}^{+},\,\sigma_y=k)
        -o(\PB(X_1>q_x)).
    \end{align*}
    For the upper bound, on
    $\{\sigma_y=k\}\cap(B_{k,x}^{-})^c$ we have
    $y<S_k\le(1-\eta)q_x$. If $R_p>x$, then
    \eqref{eq: queue_pre_jump_reward} implies
    \begin{align*}
        H_p^{(k)}(S_k)
        =R_p-R_{p,k}^{-}
        >x-(k-1)y^p.
    \end{align*}
    Conditioning explicitly on $\mathcal F_k$ therefore gives
    \begin{align*}
        \PB(R_p>x,\,\sigma_y=k,\,(B_{k,x}^{-})^c)
        &\le\EB\left[
        \mathbbm{1}_{\{\sigma_y=k\}\cap(B_{k,x}^{-})^c}
        \PB\left(
        H_p^{(k)}(S_k)>x-(k-1)y^p
        \mid\mathcal F_k
        \right)
        \right]\\
        &\le\sup_{y\le v\le(1-\eta)q_x}
        \PB\left(H_p(v)>x-(k-1)y^p\right)
        \PB(\sigma_y=k)\\
        &=o(\PB(X_1>q_x)).
    \end{align*}
    Consequently, for every fixed $k$,
    \begin{align*}
        \PB(B_{k,x}^{+},\,\sigma_y=k)-o(\PB(X_1>q_x))
        \le \PB(R_p>x,\,\sigma_y=k)
        \le \PB(B_{k,x}^{-},\,\sigma_y=k)
        +o(\PB(X_1>q_x)).
    \end{align*}
    We next evaluate the height probabilities in this bound. For fixed
    $k$ and $c>\varepsilon$, let
    $M_{k-1}=\max_{1\le j\le k-1}S_j$, with $M_0=0$. Independence of
    $X_k$ from $\mathcal F_{k-1}$ gives
    \begin{align*}
        \PB(S_k>cq_x,\,\sigma_y=k)
        =\EB\left[
        \mathbbm{1}_{\{\beta_1>k-1,\,M_{k-1}\le y\}}
        \PB(X_1>cq_x-S_{k-1})
        \right].
    \end{align*}
    On the event inside the expectation,
    $0\le S_{k-1}\le y=\varepsilon q_x$, and hence
    \begin{align*}
        (c-\varepsilon)q_x
        \le cq_x-S_{k-1}\le cq_x.
    \end{align*}
    Since the tail probability is nonincreasing,
    \begin{align*}
        1
        \le
        \frac{\PB(X_1>cq_x-S_{k-1})}
        {\PB(X_1>cq_x)}
        \le
        \frac{\PB(X_1>(c-\varepsilon)q_x)}
        {\PB(X_1>cq_x)}.
    \end{align*}
    The right-hand side converges, by regular variation, to the finite
    constant $\left(\frac{c-\varepsilon}{c}\right)^{-\alpha}$. The ratio in the middle
    is therefore bounded uniformly over the event inside the
    expectation for all sufficiently large $x$.
    Moreover, because $k$ is fixed, $M_{k-1}$ is the maximum of
    finitely many almost surely finite random variables. Since
    $y=\varepsilon q_x\to\infty$, it follows that
    $\mathbbm{1}_{\{M_{k-1}\le y\}}\to1$ almost surely. For every fixed
    sample path, the tail ratio above also converges to one. Dominated
    convergence therefore yields
    \begin{align*}
        \PB(S_k>cq_x,\,\sigma_y=k)
        \sim \PB(\beta_1>k-1)\PB(X_1>cq_x).
    \end{align*}
    Taking $c=1$ proves \eqref{eq: queue_fixed_jump_height}. It follows
    from the preceding bounds that
    \begin{align*}
        \PB(R_p>x,\,\sigma_y=k)-\PB(S_k>q_x,\,\sigma_y=k)
        &\le
        \PB((1-\eta)q_x<S_k\le q_x,\,\sigma_y=k)
        +o(\PB(X_1>q_x)),\\
        \PB(S_k>q_x,\,\sigma_y=k)-\PB(R_p>x,\,\sigma_y=k)
        &\le
        \PB(q_x<S_k\le(1+\eta)q_x,\,\sigma_y=k)
        +o(\PB(X_1>q_x)).
    \end{align*}
    Hence, regular variation yields
    \begin{align*}
        \limsup_{x\to\infty}
        \frac{
        \PB(R_p>x,\,\sigma_y=k)
        -\PB(S_k>q_x,\,\sigma_y=k)
        }{\PB(X_1>q_x)}
        &\le \PB(\beta_1>k-1)
        \left((1-\eta)^{-\alpha}-1\right),\\
        \limsup_{x\to\infty}
        \frac{
        \PB(S_k>q_x,\,\sigma_y=k)
        -\PB(R_p>x,\,\sigma_y=k)
        }{\PB(X_1>q_x)}
        &\le \PB(\beta_1>k-1)
        \left(1-(1+\eta)^{-\alpha}\right).
    \end{align*}
    Both right-hand sides tend to zero as $\eta\downarrow0$, which
    proves
    \eqref{eq: queue_reward_height_replacement}. Combining
    \eqref{eq: queue_reward_height_replacement} and
    \eqref{eq: queue_fixed_jump_height} yields
    \begin{align}
        \PB(R_p>x,\,\sigma_y=k)
        \sim
        \PB(\beta_1>k-1)\PB(X_1>q_x).
        \label{eq: queue_fixed_jump}
    \end{align}
    Since the events
    $\{\sigma_y=k\}$ are disjoint, summing
    \eqref{eq: queue_fixed_jump} over $k=1,\ldots,N$ gives, for every
    fixed $N$,
    \begin{align}
        \PB(R_p>x,\,\sigma_y\le N)
        \sim
        \EB[\beta_1\wedge N]\PB(X_1>q_x).
        \label{eq: queue_early_jump}
    \end{align}
    Although $\EB[\beta_1\wedge N]\to\EB[\beta_1]$, we cannot let
    $N\to\infty$ directly in \eqref{eq: queue_early_jump}, because
    this asymptotic equivalence was obtained with $N$ fixed and its
    error is not known to be uniform in $N$. We therefore need to show
    that a large jump occurring after time $N$ is negligible on the
    $\PB(X_1>q_x)$ scale.
    To see this, note that, for each fixed $k$,
    \begin{align*}
        \PB(\sigma_y=k)
        &=
        \PB\left(\max_{n\le\beta_1\wedge k}S_n>y\right)
        -
        \PB\left(\max_{n\le\beta_1\wedge(k-1)}S_n>y\right).
    \end{align*}
    Applying the truncated-maximum asymptotics in Proposition~4 of
    \cite{denisov2021tail} to the two terms gives
    \begin{align*}
        \PB(\sigma_y=k)
        \sim\PB(\beta_1>k-1)\PB(X_1>y).
    \end{align*}
    By definition, $\{M>y\}=\{\sigma_y<\infty\}$ and
    $\{\sigma_y=k\}\subseteq\{M>y\}$. Hence, together with
    \eqref{eq: queue_max_tail},
    \begin{align*}
        \PB(M>y,\,\sigma_y>N)
        &=\PB(M>y)-\sum_{k=1}^N
        \PB(M>y,\,\sigma_y=k)\\
        &=\PB(M>y)-\sum_{k=1}^N\PB(\sigma_y=k)\\
        &\sim\left(\EB[\beta_1]-\EB[\beta_1\wedge N]\right)
        \PB(X_1>y).
    \end{align*}
    Since
    $\frac{\PB(X_1>y)}{\PB(X_1>q_x)}\to\varepsilon^{-\alpha}$,
    \begin{align*}
        \lim_{N\to\infty}\limsup_{x\to\infty}
        \frac{\PB(M>y,\,\sigma_y>N)}
        {\PB(X_1>q_x)}=0.
    \end{align*}
    Finally, $\{M>y\}=\{\sigma_y<\infty\}$, and
    \begin{align*}
        \PB(R_p>x,\,\sigma_y\le N)
        &\le\PB(R_p>x,\,M>y)\\
        &\le\PB(R_p>x,\,\sigma_y\le N)
        +\PB(M>y,\,\sigma_y>N).
    \end{align*}
    Dividing this sandwich by $\PB(X_1>q_x)$, applying
    \eqref{eq: queue_early_jump} for fixed $N$, and then letting
    $N\to\infty$ gives
    \begin{align}
        \label{eq: queue_large_max}
        \PB(R_p>x,\,M>\varepsilon q_x)
        &\sim \EB[\beta_1]\PB(X_1>q_x).
    \end{align}
    It remains to rule out a large reward produced without a large
    maximum. Since $R_p\le \beta_1 M^p$, equations
    \eqref{eq: queue_length_tail} and $q_x^{p+1}=a(p+1)x$ give
    \begin{align*}
        \PB(R_p>x,\,M\le\varepsilon q_x)
        &\le\PB\left(\beta_1>
        \frac{x}{(\varepsilon q_x)^p}\right)\\
        &\sim\EB[\beta_1]\PB\left(
        X_1>\frac{q_x}{(p+1)\varepsilon^p}\right).
    \end{align*}
    By regular variation and combining with
    \eqref{eq: queue_large_max}, we obtain
    \begin{align*}
        1\le\liminf_{x\to\infty}
        \frac{\PB(R_p>x)}{\EB[\beta_1]\PB(X_1>q_x)}
        \le\limsup_{x\to\infty}
        \frac{\PB(R_p>x)}{\EB[\beta_1]\PB(X_1>q_x)}
        \le1+((p+1)\varepsilon^p)^{\alpha}.
    \end{align*}
    Letting $\varepsilon\downarrow0$ proves
    \eqref{eq: queue_reward_tail}. The claimed regular-variation
    index follows from \eqref{eq: increment_tail}.\hfill\qedsymbol

\subsubsection{Proof of Theorem~\ref{thm: queue}}
Let $\beta_0=0$ and define the successive regeneration times by
$\beta_{k+1}=\inf\{j>\beta_k:W_j=0\}$. We decompose
$Z_1(n)-n\mu_W$ as
\begin{align*}
    Z_1(n)-n\mu_W
    =\sum_{k=1}^{\chi(n)}(D_k-\mu_W\tau_k)
    +\sum_{i=\beta_{\chi(n)}+1}^n(W_i-\mu_W),
\end{align*}
where $\chi(n)=\max\{k:\beta_k\le n\}$,
$\tau_k=\beta_k-\beta_{k-1}$,
$D_k=\sum_{i=\beta_{k-1}+1}^{\beta_k}W_i$. The pairs
$\{(D_k,\tau_k)\}_{k\ge1}$ are i.i.d. It follows from
Theorem~\ref{thm: queue_reward_tail} and~\eqref{eq: queue_length_tail}
that
$\PB(D_1>x)\in\mathrm{RV}_{-\frac{\alpha}{2}}$ and
$\PB(\tau_1>x)\in\mathrm{RV}_{-\alpha}$, respectively. Hence
$D_1-\mu_W\tau_1$ is $\frac{\alpha}{2}$-regularly varying. Moreover,
$\EB[D_1-\mu_W\tau_1]=0$. Thus, for a suitable slowly varying function
$\ell$,
\begin{align}
    n^{-\frac{2}{\alpha}}\ell(n)
    \sum_{k=1}^{\lfloor n\,\cdot\rfloor}(D_k-\mu_W\tau_k)
    \overset{J_1}{\Rightarrow}L_{\frac{\alpha}{2}}(\cdot).
    \label{eq: queue_cycle_fclt}
\end{align}
Moreover, by the definition of $\chi(n)$,
\begin{align*}
    \frac{\chi(nt)}{n}
    &=\max\left\{\frac{k}{n}:
    \sum_{i=1}^k\tau_i\le nt\right\}\\
    &=\max\left\{s\in n^{-1}\mathbb N_0:
    \frac{1}{n}\sum_{i=1}^{ns}\tau_i\le t\right\}
    \toas\frac{t}{\EB[\tau_1]}.
\end{align*}
The convergence holds locally uniformly in $t$. Since the limit is
deterministic, this convergence holds
jointly with~\eqref{eq: queue_cycle_fclt}. Thus, by the
random-time-change theorem,
\begin{align}
    n^{-\frac{2}{\alpha}}\ell(n)
    \sum_{k=1}^{\chi(n\,\cdot)}(D_k-\mu_W\tau_k)
    \overset{J_1}{\Rightarrow}
    L_{\frac{\alpha}{2}}\left(\frac{\cdot}{\EB[\tau_1]}\right).
    \label{eq: queue_renewal_reward_fclt}
\end{align}
However, the normalized remainder term
$n^{-\frac{2}{\alpha}}\ell(n)
\sum_{i=\beta_{\chi(\lfloor nt\rfloor)}+1}^{\lfloor nt\rfloor}W_i$
does not converge uniformly to zero in probability. Instead, for every
$T<\infty$, we prove
\begin{align}
&d_{M_1,[0,T]}(\widetilde W_n,\widetilde Y_n)\\
:=&d_{M_1,[0,T]}\left(
    n^{-\frac{2}{\alpha}}\ell(n)
    \left(Z_1(n\,\cdot)-n\mu_W\,\cdot\right),
    n^{-\frac{2}{\alpha}}\ell(n)
    \sum_{k=1}^{\chi(n\,\cdot)}(D_k-\mu_W\tau_k)
    \right)\topb0.
    \label{eq: queue_cycle_compression}
\end{align}
For cycle $k$ and $0\le s\le\tau_k$, let
$C_k(s)=\sum_{i=\beta_{k-1}+1}^{\beta_{k-1}+\lfloor s\rfloor}W_i$,
and let $(r_k(u),a_k(u))$, $0\le u\le1$, be an ordered continuous
parametric representation of its completed graph, running from $(0,0)$
to $(\tau_k,D_k)$. We first describe the two cycle-level representations
relative to their common spatial value at time $\frac{\beta_{k-1}}{n}$ and
before normalization by $n^{-\frac{2}{\alpha}}\ell(n)$. For
$0\le u\le\frac{1}{2}$, set
\begin{align*}
    (\lambda_{1,n}^{(k)}(u),\rho_{1,n}^{(k)}(u))
    &=\left(\frac{\beta_{k-1}}{n},0\right),\\
    (\lambda_{2,n}^{(k)}(u),\rho_{2,n}^{(k)}(u))
    &=\left(\frac{\beta_{k-1}+2u\tau_k}{n},0\right).
\end{align*}
For $\frac{1}{2}\le u\le1$, set
\begin{align*}
    \lambda_{1,n}^{(k)}(u)
    &=\frac{\beta_{k-1}+r_k(2u-1)}{n},
    &\rho_{1,n}^{(k)}(u)
    &=a_k(2u-1)-\mu_Wr_k(2u-1),\\
    \lambda_{2,n}^{(k)}(u)
    &=\frac{\beta_k}{n},
    &\rho_{2,n}^{(k)}(u)
    &=\begin{cases}
        \displaystyle
        \frac{D_k-\mu_W\tau_k}{D_k}a_k(2u-1),&D_k>0,\\[6pt]
        -\mu_Wr_k(2u-1),&D_k=0.
    \end{cases}
\end{align*}
The first half lets the second path traverse its horizontal segment while
the first path remains at the beginning of the cycle. In the second half,
the first path traverses the completed graph of
$C_k(s)-\mu_Ws$, while the second path traverses its vertical jump at
$\frac{\beta_k}{n}$. Since $W_i\ge0$, $\frac{a_k(u)}{D_k}\in[0,1]$ when $D_k>0$;
when $D_k=0$, we have $C_k\equiv0$. Hence
\begin{align*}
    \|\lambda_{1,n}^{(k)}-\lambda_{2,n}^{(k)}\|_\infty
    &\le\frac{\tau_k}{n},\\
    \|\rho_{1,n}^{(k)}-\rho_{2,n}^{(k)}\|_\infty
    &\le\mu_W\tau_k.
\end{align*}
After normalization by $n^{-\frac{2}{\alpha}}\ell(n)$, we have:
\begin{align*}
&d_{M_1,[0,T]}(\widetilde W_n,\widetilde Y_n)\\
\le&\max_{k\le\chi(\lfloor nT\rfloor)}
    \|\lambda_{1,n}^{(k)}-\lambda_{2,n}^{(k)}\|_\infty
    +n^{-\frac{2}{\alpha}}\ell(n)\left(
    \max_{k\le\chi(\lfloor nT\rfloor)}
    \|\rho_{1,n}^{(k)}-\rho_{2,n}^{(k)}\|_\infty
    +D_{\chi(\lfloor nT\rfloor)+1}
    +\mu_W\tau_{\chi(\lfloor nT\rfloor)+1}\right)\\
\le&\left(n^{-1}+\mu_Wn^{-\frac{2}{\alpha}}\ell(n)\right)
    \max_{k\le\chi(\lfloor nT\rfloor)+1}\tau_k
    +n^{-\frac{2}{\alpha}}\ell(n)
    (D_{\chi(\lfloor nT\rfloor)+1}
    +\mu_W\tau_{\chi(\lfloor nT\rfloor)+1}).
\end{align*}
Since $\PB(\tau_1>x)\in\mathrm{RV}_{-\alpha}$ by
\eqref{eq: queue_length_tail}, for every $\delta>0$, we have:
\begin{align*}
    \PB\left(
    \left(n^{-1}+\mu_Wn^{-\frac{2}{\alpha}}\ell(n)\right)
    \max_{k\le\chi(\lfloor nT\rfloor)+1}\tau_k>\delta\right)
    \le(\lfloor nT\rfloor+1)\!\PB\left(
    \tau_1>\frac{\delta}
    {n^{-1}+\mu_Wn^{-\frac{2}{\alpha}}\ell(n)}\right)\to0.
\end{align*}
To control the remaining term, note that
$\sum_{k\ge0}\PB(\beta_k=s)\le1$ for every integer $s$, since
$\{\beta_k\}$ is strictly increasing. Moreover,
$\{\chi(\lfloor nT\rfloor)=k\}
=\{\beta_k\le\lfloor nT\rfloor<\beta_{k+1}\}$. Since
$(D_{k+1},\tau_{k+1})$ is independent of $\beta_k$ and has the same
distribution as $(D_1,\tau_1)$, for every $x>0$,
\begin{align*}
\PB(D_{\chi(\lfloor nT\rfloor)+1}>x)
&=\sum_{k\ge0}\PB(D_{k+1}>x,
    \beta_k\le\lfloor nT\rfloor<\beta_{k+1})\\
&=\sum_{s=0}^{\lfloor nT\rfloor}\sum_{k\ge0}
    \PB(\beta_k=s,D_{k+1}>x,
    \tau_{k+1}>\lfloor nT\rfloor-s)\\
&=\sum_{s=0}^{\lfloor nT\rfloor}\sum_{k\ge0}\PB(\beta_k=s)
    \PB(D_1>x,\tau_1>\lfloor nT\rfloor-s)\\
&\le\sum_{s=0}^{\lfloor nT\rfloor}
    \PB(D_1>x,\tau_1>\lfloor nT\rfloor-s)\\
&\le\sum_{r=0}^{\infty}\PB(D_1>x,\tau_1>r)\\
&=\EB\left[\tau_1\mathbbm{1}\{D_1>x\}\right].
\end{align*}
Similarly, we have:
\begin{align*}
    \PB(\tau_{\chi(\lfloor nT\rfloor)+1}>x)
    \le\EB\left[\tau_1\mathbbm{1}\{\tau_1>x\}\right].
\end{align*}
Both bounds tend to zero as $x\to\infty$. Taking
$x=\frac{\delta n^{\frac{2}{\alpha}}}{\ell(n)}$ proves
\eqref{eq: queue_cycle_compression}. Combining
\eqref{eq: queue_renewal_reward_fclt} and
\eqref{eq: queue_cycle_compression} gives the $M_1$ convergence.
Taking $n=\varepsilon^{-1}$ proves~\eqref{eq: queue_fclt}. Finally, the
strong law gives $\frac{Z_1(t)}{t}\to\mu_W$ almost surely.
The limiting L\'evy process is $\frac{\alpha}{2}$-stable and hence
$\frac{2}{\alpha}$-self-similar and
quasi-left-continuous w.r.t.\ its natural filtration.

\subsubsection{Proof of Theorem~\ref{thm: queue_early_stopping}}
Fix $0<\nu<\frac{\alpha}{\alpha-2}$.
All implicit constants below are independent of $t$, $\varepsilon$, and
$\eta$.
Define the deterministic-time regenerative approximation
\begin{align*}
    \widehat Z_1(t)
    :=\sum_{k=1}^{\lfloor \frac{t}{\EB[\tau_1]}\rfloor}
    (D_k-\mu_W\tau_k),
    \qquad t\ge0,
\end{align*}
and use independent copies of this process for sectioning. For $t>0$,
define
\begin{align*}
    \Lambda_{\mathrm{sec}}(t)
    &:=\frac{2}{t}\int_{\frac{t}{2}}^t
    \left(Z_1^{(1)}(s)-Z_1^{(2)}(s)\right)\,\mathrm ds,\\
    \Lambda_{\mathrm{bm}}(t)
    &:=\frac{2}{t}\int_{\frac{t}{2}}^t
    \left(Z_1\left(\frac{s}{m}\right)-Z_1(s)
    +Z_1\left(\frac{m-1}{m}s\right)\right)\,\mathrm ds,\\
    \Lambda_{\mathrm{rs}}(t)
    &:=\frac{8}{t}\int_{\frac{t}{2}}^t\frac{1}{u}
    \int_{\frac{u}{4}}^{\frac{u}{2}}
    \left(Z_1(s)-\frac{s}{u}Z_1(u)\right)\,\mathrm ds\,\mathrm du.
\end{align*}
Let $\widehat\Lambda_\iota(t)$ denote the same functional with each
$Z_1$ replaced by $\widehat Z_1$. Replacing $Z_1(v)$ by
$Z_1(v)-\mu_Wv$ leaves all three functionals unchanged. For sectioning,
\begin{align}
    Z_2^{*,\mathrm{sec}}(t)
    &\overset{(a)}{=}\frac{1}{m(m-1)t}
    \sum_{1\le i<j\le m}\int_0^t
    \left(Z_1^{(i)}(s)-Z_1^{(j)}(s)\right)^2\,\mathrm ds\notag\\
    &\overset{(b)}{\ge}\frac{1}{m(m-1)t}\int_{\frac{t}{2}}^t
    \left(Z_1^{(1)}(s)-Z_1^{(2)}(s)\right)^2\,\mathrm ds\notag\\
    &\overset{(c)}{\ge}\frac{2}{m(m-1)t^2}
    \left(\int_{\frac{t}{2}}^t
    \left(Z_1^{(1)}(s)-Z_1^{(2)}(s)\right)\,\mathrm ds\right)^2\notag\\
    &\overset{(d)}{=}\frac{\Lambda_{\mathrm{sec}}(t)^2}{2m(m-1)},
    \label{eq: queue_sec_projection}
\end{align}
where (a) is due to the identity
$\sum_{i=1}^m(x_i-\bar x)^2
=m^{-1}\sum_{i<j}(x_i-x_j)^2$, (b) retains the first two copies and
restricts the integral to $[\frac{t}{2},t]$, (c) follows from
the Cauchy--Schwarz inequality, and (d) follows from the definition of
$\Lambda_{\mathrm{sec}}(t)$. Similarly, for batch means,
\begin{align}
    Z_2^{*,\mathrm{bm}}(t)
    &\overset{(a)}{=}\frac{1}{m(m-1)t}
    \sum_{1\le k<j\le m}\int_0^t
    \left(Z_{1,k}^{\mathrm{bm}}(s)-Z_{1,j}^{\mathrm{bm}}(s)\right)^2\,\mathrm ds\notag\\
    &\overset{(b)}{\ge}\frac{1}{m(m-1)t}\int_{\frac{t}{2}}^t
    \left(Z_1\left(\frac{s}{m}\right)-Z_1(s)
    +Z_1\left(\frac{m-1}{m}s\right)\right)^2\,\mathrm ds\notag\\
    &\overset{(c)}{\ge}\frac{2}{m(m-1)t^2}
    \left(\int_{\frac{t}{2}}^t
    \left(Z_1\left(\frac{s}{m}\right)-Z_1(s)
    +Z_1\left(\frac{m-1}{m}s\right)\right)\,\mathrm ds\right)^2\notag\\
    &\overset{(d)}{=}\frac{\Lambda_{\mathrm{bm}}(t)^2}{2m(m-1)},
    \label{eq: queue_bm_projection}
\end{align}
where (a) follows from the same pairwise identity, (b) retains the
first and last batches and restricts the integral to $[\frac{t}{2},t]$, (c)
follows from the Cauchy--Schwarz inequality, and (d) follows from the
definition of $\Lambda_{\mathrm{bm}}(t)$. For random scaling,
\begin{align}
    Z_2^{*,\mathrm{rs}}(t)
    &\overset{(a)}{\ge}\frac{1}{t}\int_{\frac{t}{2}}^t\frac{1}{u}
    \int_{\frac{u}{4}}^{\frac{u}{2}}
    \left(Z_1(s)-\frac{s}{u}Z_1(u)\right)^2
    \,\mathrm ds\,\mathrm du\notag\\
    &\overset{(b)}{\ge}\frac{8}{t^2}\left(\int_{\frac{t}{2}}^t\frac{1}{u}
    \int_{\frac{u}{4}}^{\frac{u}{2}}\left(Z_1(s)-\frac{s}{u}Z_1(u)\right)
    \,\mathrm ds\,\mathrm du\right)^2\notag\\
    &\overset{(c)}{=}\frac{\Lambda_{\mathrm{rs}}(t)^2}{8},
    \label{eq: queue_rs_projection}
\end{align}
where (a) restricts the defining integrals to
$\frac{t}{2}\le u\le t$ and $\frac{u}{4}\le s\le \frac{u}{2}$, (b) follows from
the Cauchy--Schwarz inequality and
\begin{align*}
    \int_{\frac{t}{2}}^t\frac{1}{u}\int_{\frac{u}{4}}^{\frac{u}{2}}1\,\mathrm ds\,\mathrm du
    =\frac{t}{8},
\end{align*}
and (c) follows from the definition of $\Lambda_{\mathrm{rs}}(t)$.
We next compare the preceding projections with their regenerative
approximations. For sectioning,
\begin{align}
|\Lambda_{\mathrm{sec}}(t)-\widehat\Lambda_{\mathrm{sec}}(t)|
&\le\frac{2}{t}\sum_{i=1}^2\int_{\frac{t}{2}}^t
    |Z_1^{(i)}(s)-\mu_Ws-\widehat Z_1^{(i)}(s)|\,\mathrm ds\notag\\
&\le\frac{2}{t}\sum_{i=1}^2\int_0^t
    |Z_1^{(i)}(s)-\mu_Ws-\widehat Z_1^{(i)}(s)|\,\mathrm ds.
    \label{eq: queue_sec_approximation}
\end{align}
For batch means,
\begin{align}
|\Lambda_{\mathrm{bm}}(t)-\widehat\Lambda_{\mathrm{bm}}(t)|
&\le\frac{2}{t}\int_{\frac{t}{2}}^t
    \left|Z_1\left(\frac{s}{m}\right)-\frac{\mu_Ws}{m}
    -\widehat Z_1\left(\frac{s}{m}\right)\right|\,\mathrm ds\notag\\
&+\frac{2}{t}\int_{\frac{t}{2}}^t
    |Z_1(s)-\mu_Ws-\widehat Z_1(s)|\,\mathrm ds\notag\\
&+\frac{2}{t}\int_{\frac{t}{2}}^t
    \left|Z_1\left(\frac{m-1}{m}s\right)
    -\frac{m-1}{m}\mu_Ws
    -\widehat Z_1\left(\frac{m-1}{m}s\right)\right|\,\mathrm ds\notag\\
&\le\frac{2}{t}\left(m+1+\frac{m}{m-1}\right)
    \int_0^t|Z_1(s)-\mu_Ws-\widehat Z_1(s)|\,\mathrm ds.
    \label{eq: queue_bm_approximation}
\end{align}
For random scaling,
\begin{align}
&|\Lambda_{\mathrm{rs}}(t)-\widehat\Lambda_{\mathrm{rs}}(t)|\notag\\
\le&\frac{8}{t}\int_{\frac{t}{2}}^t\frac{1}{u}
    \int_{\frac{u}{4}}^{\frac{u}{2}}\left(
    |Z_1(s)-\mu_Ws-\widehat Z_1(s)|
    +\frac{s}{u}|Z_1(u)-\mu_Wu-\widehat Z_1(u)|
    \right)\,\mathrm ds\,\mathrm du\notag\\
\overset{(a)}{=}&\frac{8}{t}\int_{\frac{t}{8}}^{\frac{t}{2}}
    |Z_1(s)-\mu_Ws-\widehat Z_1(s)|
    \int_{\max\left(\frac{t}{2},2s\right)}^{\min(t,4s)}\frac{1}{u}\,\mathrm du\,\mathrm ds
    +\frac{3}{4t}\int_{\frac{t}{2}}^t
    |Z_1(u)-\mu_Wu-\widehat Z_1(u)|\,\mathrm du\notag\\
\overset{(b)}{\le}&\frac{8}{t}\left(\log 2+\frac{3}{32}\right)
    \int_0^t|Z_1(s)-\mu_Ws-\widehat Z_1(s)|\,\mathrm ds,
    \label{eq: queue_rs_approximation}
\end{align}
where (a) follows from Fubini's theorem and
$u^{-2}\int_{\frac{u}{4}}^{\frac{u}{2}}s\,\mathrm ds=\frac{3}{32}$, and (b) follows because the
ratio of the endpoints in the inner integral is at most $2$.
Thus, \eqref{eq: queue_sec_approximation},
\eqref{eq: queue_bm_approximation}, and
\eqref{eq: queue_rs_approximation} imply that, for every
$\iota\in\{\mathrm{sec},\mathrm{bm},\mathrm{rs}\}$,
\begin{align}
    |\Lambda_\iota(t)-\widehat\Lambda_\iota(t)|
    \lesssim\frac{1}{t}\int_0^t
    |Z_1(s)-\mu_Ws-\widehat Z_1(s)|\,\mathrm ds.
    \label{eq: queue_projection_integral_bound}
\end{align}
Its right-hand side is the sum of the corresponding integrals for the
first two copies in the sectioning case. We next bound the integral on the right-hand side
of~\eqref{eq: queue_projection_integral_bound}. For every $u\ge0$,
\begin{align}
    Z_1(u)-\mu_Wu-\widehat Z_1(u)
    =&Z_1(u)-\mu_Wu
    -\sum_{k\ge1}(D_k-\mu_W\tau_k)
    \mathbbm{1}\{\beta_k\le u\}\notag\\
    &+\sum_{k\ge1}(D_k-\mu_W\tau_k)
    \left(\mathbbm{1}\{\beta_k\le u\}
    -\mathbbm{1}\{k\EB[\tau_1]\le u\}\right).
    \label{eq: queue_path_decomposition}
\end{align}
For $\beta_{j-1}\le u<\beta_j$,
\begin{align}
\left|Z_1(u)-\mu_Wu
    -\sum_{k\ge1}(D_k-\mu_W\tau_k)
    \mathbbm{1}\{\beta_k\le u\}\right|
&=\left|\sum_{i=\beta_{j-1}+1}^{\lfloor u\rfloor}W_i
    -\mu_W(u-\beta_{j-1})\right|\notag\\
&\le D_j+\mu_W\tau_j.
    \label{eq: queue_within_cycle_bound}
\end{align}
Moreover,
\begin{align}
    \int_0^t\left|
    \mathbbm{1}\{\beta_k\le u\}
    -\mathbbm{1}\{k\EB[\tau_1]\le u\}\right|\,\mathrm du
    \le|\beta_k-k\EB[\tau_1]|.
    \label{eq: queue_clock_shift_bound}
\end{align}
Thus, for every $t\ge1$,
\begin{align}
&\int_0^t\left|Z_1(u)-\mu_Wu-\widehat Z_1(u)\right|\,\mathrm du\notag\\
\overset{(a)}{\le}&\sum_{k=1}^{\lfloor t\rfloor+1}
    (D_k+\mu_W\tau_k)\tau_k
    +\sum_{k=1}^{\lfloor t\rfloor+1}|D_k-\mu_W\tau_k|
    |\beta_k-k\EB[\tau_1]|\notag\\
\le&\sum_{k=1}^{\lfloor t\rfloor+1}
    (D_k+\mu_W\tau_k)\tau_k
    +\max_{k\le\lfloor t\rfloor+1}
    |\beta_k-k\EB[\tau_1]|
    \sum_{k=1}^{\lfloor t\rfloor+1}|D_k-\mu_W\tau_k|,
    \label{eq: queue_regenerative_integral_bound}
\end{align}
where (a) follows from
\eqref{eq: queue_path_decomposition},
\eqref{eq: queue_within_cycle_bound}, and
\eqref{eq: queue_clock_shift_bound}.
Choose
\begin{align*}
    0<\delta<\frac{2}{\alpha}-\frac12,
    \qquad
    \frac{2}{3+2\delta}<s<\min\left(1,\frac{\alpha}{3}\right).
\end{align*}
The tail results used in the proof of Theorem~\ref{thm: queue} imply
\begin{align*}
    \EB[\tau_1^2]<\infty,\qquad
    \EB[|D_1-\mu_W\tau_1|]<\infty,\qquad
    \EB[((D_1+\mu_W\tau_1)\tau_1)^s]<\infty.
\end{align*}
The last assertion is due to subadditivity and H\"older's inequality:
\begin{align*}
\EB[((D_1+\mu_W\tau_1)\tau_1)^s]
&\overset{(a)}{\le}\EB[D_1^s\tau_1^s]
    +\mu_W^s\EB[\tau_1^{2s}]\\
&\overset{(b)}{\le}\EB[D_1^{\frac{3s}{2}}]^{\frac{2}{3}}\EB[\tau_1^{3s}]^{\frac{1}{3}}
    +\mu_W^s\EB[\tau_1^{2s}]<\infty,
\end{align*}
where (a) follows from subadditivity because $s<1$, and (b) follows
from H\"older's inequality with conjugate exponents $\frac{3}{2}$ and $3$.
The last expression is finite because $D_1$ and $\tau_1$ have regularly
varying tails with indices $-\frac{\alpha}{2}$ and $-\alpha$, respectively, and
$3s<\alpha$. We next bound separately the three quantities on the
right-hand side of~\eqref{eq: queue_regenerative_integral_bound}:
\begin{align*}
&\PB\left(\sum_{k=1}^{\lfloor t\rfloor+1}
    (D_k+\mu_W\tau_k)\tau_k>t^{\frac{3}{2}+\delta}\right)
    \overset{(a)}{\le}t^{-s\left(\frac{3}{2}+\delta\right)}
    \EB\left[\left(\sum_{k=1}^{\lfloor t\rfloor+1}
    (D_k+\mu_W\tau_k)\tau_k\right)^s\right]
    \overset{(b)}{\lesssim}t^{1-s\left(\frac{3}{2}+\delta\right)},\\
&\PB\left(\max_{k\le\lfloor t\rfloor+1}
    |\beta_k-k\EB[\tau_1]|>t^{\frac{1}{2}+\frac{\delta}{2}}\right)
    \overset{(c)}{\le}\frac{(\lfloor t\rfloor+1)\Var(\tau_1)}
    {t^{1+\delta}}\lesssim t^{-\delta},\\
&\PB\left(\sum_{k=1}^{\lfloor t\rfloor+1}
    |D_k-\mu_W\tau_k|>t^{1+\frac{\delta}{2}}\right)
    \overset{(d)}{\le}\frac{(\lfloor t\rfloor+1)
    \EB[|D_1-\mu_W\tau_1|]}{t^{1+\frac{\delta}{2}}}
    \lesssim t^{-\frac{\delta}{2}}.
\end{align*}
Here, (a) and (d) follow from Markov's inequality, (b) follows from
subadditivity, and (c) follows from Kolmogorov's maximal inequality.
Consequently, \eqref{eq: queue_projection_integral_bound},
\eqref{eq: queue_regenerative_integral_bound}, and the three probability
bounds above imply that, for some $c,C_1>0$ and every
$\iota\in\{\mathrm{sec},\mathrm{bm},\mathrm{rs}\}$,
\begin{align}
    \PB\left(
    |\Lambda_\iota(t)-\widehat\Lambda_\iota(t)|
    >C_1t^{\frac{1}{2}+\delta}\right)
    \lesssim t^{-c}.
    \label{eq: queue_projection_approximation}
\end{align}
Recalling the definitions of $\widehat\Lambda_\iota(t)$, for
$\iota\in\{\mathrm{bm},\mathrm{rs}\}$ we can write
\begin{align*}
    \widehat\Lambda_\iota(t)
    =\sum_{k\ge1}a_{\iota,k,t}(D_k-\mu_W\tau_k),
\end{align*}
whereas, for sectioning,
\begin{align*}
    \widehat\Lambda_{\mathrm{sec}}(t)
    =\sum_{q=1}^2\sum_{k\ge1}a_{\mathrm{sec},q,k,t}
    (D_k^{(q)}-\mu_W\tau_k^{(q)}).
\end{align*}
Here, the coefficients $a_{\iota,k,t}$ and
$a_{\mathrm{sec},q,k,t}$ are deterministic. And for every $\iota$, a sufficient
number of coefficients are bounded away from zero. For
sectioning, if
$k\EB[\tau_1]\le \frac{t}{2}$, then
\begin{align*}
    a_{\mathrm{sec},1,k,t}
    &=\frac{2}{t}\int_{\frac{t}{2}}^t
    \mathbbm{1}\{k\EB[\tau_1]\le s\}\,\mathrm ds=1.
\end{align*}
For batch means, if $k\EB[\tau_1]\le \frac{t}{2m}$, then
\begin{align*}
    a_{\mathrm{bm},k,t}
    =&\frac{2}{t}\int_{\frac{t}{2}}^t\left(
    \mathbbm{1}\left\{k\EB[\tau_1]\le\frac{s}{m}\right\}
    -\mathbbm{1}\{k\EB[\tau_1]\le s\}
    +\mathbbm{1}\left\{k\EB[\tau_1]\le\frac{m-1}{m}s\right\}
    \right)\,\mathrm ds\\
    =&\frac{2}{t}\int_{\frac{t}{2}}^t(1-1+1)\,\mathrm ds=1.
\end{align*}
For random scaling, if $k\EB[\tau_1]\le \frac{t}{8}$, then
\begin{align*}
    a_{\mathrm{rs},k,t}
    =\frac{8}{t}\int_{\frac{t}{2}}^t\int_{\frac{1}{4}}^{\frac{1}{2}}(1-v)\,\mathrm dv\,\mathrm du
    =4\int_{\frac{1}{4}}^{\frac{1}{2}}(1-v)\,\mathrm dv=\frac58.
\end{align*}
Hence, for all sufficiently large $t$, there are at least $c_0t$
coefficients in $[\frac{5}{8},1]$, where
$c_0=(2\EB[\tau_1])^{-1}\min\{\frac{1}{2m},\frac{1}{8}\}$.
Let $\operatorname{med}(D_1-\mu_W\tau_1)$ denote a fixed median. For
$\frac{5}{8}\le a\le1$ and every $x\in\RB$, if
$\operatorname{med}(D_1-\mu_W\tau_1)<\frac{x-z}{a}$, then
\begin{align*}
    \PB\left(|a(D_1-\mu_W\tau_1)-x|>z\right)
    \ge\PB\left(D_1-\mu_W\tau_1
    \le\operatorname{med}(D_1-\mu_W\tau_1)\right)\ge\frac12.
\end{align*}
Otherwise,
\begin{align*}
    \frac{x+z}{a}
    =\frac{x-z}{a}+\frac{2z}{a}
    \le\operatorname{med}(D_1-\mu_W\tau_1)+\frac{16z}{5}
    \le4z
\end{align*}
for all sufficiently large $z$, and hence
\begin{align*}
    \PB\left(|a(D_1-\mu_W\tau_1)-x|>z\right)
    \ge\PB(D_1-\mu_W\tau_1>4z).
\end{align*}
Taking the infimum over $x$ gives
\begin{align*}
1-\sup_{x\in\RB}\PB\left(
    |a(D_1-\mu_W\tau_1)-x|\le z\right)
&=\inf_{x\in\RB}\PB\left(
    |a(D_1-\mu_W\tau_1)-x|>z\right)\\
&\ge\min\left\{\frac12,
    \PB(D_1-\mu_W\tau_1>4z)\right\}\\
&=\PB(D_1-\mu_W\tau_1>4z).
\end{align*}
The last equality follows because
$\PB(D_1-\mu_W\tau_1>4z)\to0$ as $z\to\infty$.
Thus, applying Lemma~\ref{lem: kolmogorov_rogozin}, we obtain, for all
sufficiently large $t$ and $z$,
\begin{align}
    \sup_{x\in\RB}\PB\left(|\widehat\Lambda_\iota(t)-x|\le z\right)
    \lesssim\frac{1}{\sqrt{c_0t\PB(D_1-\mu_W\tau_1>4z)}}.
    \label{eq: queue_projection_concentration}
\end{align}
Fix $K>0$, and set
$d_{\mathrm{sec}}=d_{\mathrm{bm}}=\sqrt{2m(m-1)}$,
$d_{\mathrm{rs}}=2\sqrt{2}$, and
$z_\iota(t):=d_\iota K\varepsilon t+C_1t^{\frac{1}{2}+\delta}$.
Let $A(t)=t^{1-\frac{2}{\alpha}}\ell(t)$. There exists
$0<\kappa<\min\{c,\frac{\alpha}{4}\}$ such that, for every
$0<\eta\le1$ and all
sufficiently small $\varepsilon$, uniformly for
$\lceil\varepsilon^{-\nu}\rceil\le t\le\eta a_\varepsilon$,
\begin{align}
\PB\left(
    \frac{\sqrt{Z_2^{*,\iota}(t)}}{t}\le K\varepsilon\right)
    &\overset{(a)}{\le}\PB\left(|\Lambda_\iota(t)|
    \le d_\iota K\varepsilon t\right)\notag\\
    &\overset{(b)}{\lesssim}t^{-c}
    +\PB\left(|\widehat\Lambda_\iota(t)|\le z_\iota(t)\right)\notag\\
    &\overset{(c)}{\lesssim}t^{-c}
    +\frac{1}{\sqrt{t\PB(
    D_1-\mu_W\tau_1>4z_\iota(t))}}\notag\\
    &\overset{(d)}{\lesssim}
    (\varepsilon A(t))^\kappa+t^{-\kappa}.
    \label{eq: queue_scaling_small_ball}
\end{align}
(a) follows from \eqref{eq: queue_sec_projection},
\eqref{eq: queue_bm_projection}, and~\eqref{eq: queue_rs_projection},
(b) follows from~\eqref{eq: queue_projection_approximation}, and (c)
follows by taking $x=0$ in~\eqref{eq: queue_projection_concentration}.
To verify (d), Potter's bound and
$A(a_\varepsilon)\sim\varepsilon^{-1}$ give, uniformly for
$\lceil\varepsilon^{-\nu}\rceil\le t\le\eta a_\varepsilon$,
\begin{align*}
    \varepsilon A(t)
    \asymp\frac{A(t)}{A(a_\varepsilon)}
    \lesssim\left(\frac{t}{a_\varepsilon}
    \right)^{\frac{\alpha-2}{2\alpha}}\le1.
\end{align*}
Moreover, the normalization in~\eqref{eq: queue_cycle_fclt} implies
$t\PB\left(D_1-\mu_W\tau_1>\frac{t^{\frac{2}{\alpha}}}{\ell(t)}\right)\asymp1$, while
\begin{align*}
    \frac{z_\iota(t)\ell(t)}{t^{\frac{2}{\alpha}}}
    =d_\iota K\varepsilon A(t)
    +C_1t^{-\left(\frac{2}{\alpha}-\frac{1}{2}-\delta\right)}\ell(t).
\end{align*}
Thus, for some $\kappa'\in\left(0,\frac{\alpha}{4}\right)$, Potter's bound gives
\begin{align*}
    \frac{1}{\sqrt{t\PB(D_1-\mu_W\tau_1>4z_\iota(t))}}
    \lesssim\left(\frac{z_\iota(t)\ell(t)}{t^{\frac{2}{\alpha}}}\right)^{\kappa'}
    \lesssim(\varepsilon A(t))^\kappa+t^{-\kappa},
\end{align*}
where the last inequality holds for the sufficiently small $\kappa$
chosen above.
Finally, set $t_j=2^j\lceil\varepsilon^{-\nu}\rceil$, and let
$J_\varepsilon$ be the largest integer such that
$t_{J_\varepsilon}\le\eta a_\varepsilon$. If
$t_j\le n<2t_j$ and
$\frac{\sqrt{Z_2^{*,\iota}(n)}}{n}\le K\varepsilon$, then
\begin{align*}
    \sqrt{Z_2^{*,\iota}(t_j)}
    \overset{(a)}{\le}
    \sqrt{\frac{n}{t_j}}\sqrt{Z_2^{*,\iota}(n)}
    \le2\sqrt{2}K\varepsilon t_j,
\end{align*}
where (a) follows because $tZ_2^{*,\iota}(t)$ is nondecreasing. Therefore,
\begin{align*}
\PB\left(
    \inf_{\varepsilon^{-\nu}\le n\le\eta a_\varepsilon}
    \frac{\sqrt{Z_2^{*,\iota}(n)}}{n}\le K\varepsilon\right)
&\overset{(a)}{\lesssim}\sum_{j=0}^{J_\varepsilon}
    \left((\varepsilon A(t_j))^\kappa+t_j^{-\kappa}\right)\\
&\overset{(b)}{\lesssim}\eta^{\frac{\kappa(\alpha-2)}{2\alpha}}
    +\varepsilon^{\nu\kappa},
\end{align*}
where (a) follows from the dyadic covering and
\eqref{eq: queue_scaling_small_ball}, while (b) follows from
$\varepsilon A(t_j)\lesssim
\left(\frac{t_j}{a_\varepsilon}\right)^{\frac{\alpha-2}{2\alpha}}
\le\eta^{\frac{\alpha-2}{2\alpha}}$.
Letting first $\varepsilon\downarrow0$ and then $\eta\downarrow0$
proves Assumption~\ref{asmp: early_stopping}.

%% file: tex/conclusion.tex
This paper studies fixed-width sequential stopping when the scaling process has a nondegenerate random limit. We show that the confidence interval remains asymptotically valid at the resulting random stopping time. To make the procedure practical, we use the invariance of the terminal distribution and develop a sequential subsampling method to estimate its unknown quantiles. We first consider scaling statistics based on the conventional sum of squares, which apply to i.i.d.\ mean estimation, short-range dependent time series, and stochastic approximation. We also develop alternative choices based on sectioning, batch means, and random scaling for settings in which the scaling statistic and estimation error have different stochastic orders. These alternatives apply to the M/G/1 queue studied here and can also accommodate certain long-range dependent models when the required assumptions are verified. Together, the results extend fixed-width sequential inference beyond settings with a deterministic scaling limit.